\font\sc=cmcsc10 at 12truept 
\def \ScptB{{\mathcal B}}
\def \mF{F}
\def \mG{G}
\def \mM{M}
\def \mN{N}
\def \mycolor{\color{black}}
\def \myAd{\text{\rm{Ad}}}
\def \myFT{\text{\rm{FT}}}
\def \myId{\text{\rm{Id}}}
\def \myPV{\text{\rm{PV}}}
\def \myexp{\text{\rm{exp}}}
\def \mylog{\text{\rm{log}}}
\def \GL{\text{\rm{GL}}}
\def \Hom{\text{\rm{Hom}}}
\def \meas{\text{\rm{meas}}}
\def \SL{\text{\rm{SL}}}
\def \sgn{\text{\rm{sgn}}}
\def \mytrace{\text{\rm{trace}}}
\def \val{\text{\rm{val}}}
\def \myval{\text{\rm{val}}}
\def \myexp{\text{\rm{exp}}}
\def \mylog{\text{\rm{log}}}
\def \myG{\text{\rm{$G$}}}
\def \mytotal{\sigma}
\def \myAd{\text{\rm{Ad}}}
\def \mycInd#1#2{\text{\rm{c-Ind}}{\hskip 0.005in}^{#1}_{#2}}
\def \bF{{\mathbb F}}
\def \bN{{\mathbb N}}
\def \fkg{{\mathfrak g}}
\def \fkk{{\mathfrak k}}
\def \fkl{{\mathfrak l}}
\def \fks{{\mathfrak s}}
\def \myFF{{\mathbb F}_{q}}
\def \myFFsl2{{\mathfrak s}{\mathfrak l}  (2,{\mathbb F}_{q})}
\def \myFFGL2{{\text{\rm{GL}}}  (2,{\mathbb F}_{q})}
\def \myFFSL2{{\text{\rm{SL}}}  (2,{\mathbb F}_{q})}
\def \myauthor{Allen Moy}
\def\dsize{\displaystyle}
\def\theequation{\ifnum\value{subsection}>0\relax
\thesubsection.\arabic{equation}\relax
\else\ifnum\value{section}>0\relax
\thesection.\arabic{equation}\relax \else\arabic{equation}\fi\fi}
\newtheorem{thm}[equation]{Theorem}
\newtheorem{lemma}[equation]{Lemma}
\newtheorem{prop}[equation]{Proposition}
\newtheorem*{thm*}{Theorem}
\newtheorem*{prop*}{Proposition}
\newtheorem{cor}[equation]{Corollary}
\newcommand \reBna{B}
\newcommand \reBD{BD}
\newcommand \reBKV{BKV}
\newcommand \reDa{D}
\newcommand \reIR{IR}
\newcommand \reHCa{HC}
\newcommand \reKa{Ka}
\newcommand \reKb{Kb}
\newcommand \reMPa{MPa}
\newcommand \reMPb{MPb}
\newcommand \reMTa{MT1}
\newcommand \reMTb{MTb}
\newcommand \reSSa{SS}
\newcommand{\xRightarrow}[2][]{\ext@arrow 0359\Rightarrowfill@{#1}{#2}}
\begin{document}

{\large{


\vskip 0.30in

\noindent{\hfill}{In honor of Roger Howe as a septuagenarian, and}

\smallskip

\noindent{\hfill}{in memory of Paul Sally Jr.~and Joseph Shalika}

\smallskip

\vskip 0.30in

\title[Computations with Bernstein projectors of SL(2)]  
{Computations with Bernstein projectors of SL(2)}

\markboth{\myauthor}{Bernstein projectors}

\author{\myauthor}

\address
{Department of Mathematics, The Hong Kong University of Science and Technology \\
Clear Water Bay, Hong Kong \\
Email:\tt amoy{\char'100}ust.hk}

\address{Department of Mathematics, University of Utah, Salt Lake City, UT 84112}

\thanks{The author is partly supported by Hong Kong Research Grants Council
grant CERG {\#}603813.}

\subjclass{Primary  22E50, 22E35}

\keywords{Bernstein center, Bernstein projector, depth, Fourier transform, Moy-Prasad filtrations, p--adic group Steinberg representation, topologically unipotent}

\maketitle

\begin{abstract}

For the p-adic group $G=\SL (2)$, we present results of the computations of the sums of the Bernstein projectors of a given depth. Motivation for the computations is based on a conversation with Roger Howe in August 2013. The computations are elementary, but they provide an expansion of the delta distribution $\delta_{1_{G}}$ into an infinite sum of $G$-invariant locally integrable essentially compact distributions supported on the set of topologically unipotent elements.  When these distributions are transferred, by the exponential map, to the Lie algebra, they give $G$-invariant distributions supported on the set of topologically nilpotent elements, whose Fourier Transforms turn out to be characteristic functions of very natural $G$-domains. The computations in particular rely on the $\SL (2)$ discrete series character tables computed by Sally-Shalika in 1968. This new phenomenon for general rank has also been independently noticed in recent work of Bezrukavnikov, Kazhdan, and Varshavsky.

\end{abstract}

\vskip 0.30in

\section{Introduction}

\medskip

A key tool in harmonic analysis on a Lie group $\mG$ is the exponential map $\exp \, : \, \fkg \ \longrightarrow \ \mG$ from the Lie algebra $\fkg$ to the group $\mG$.  The map, defined for all $X \, \in \, \fkg$ is a local diffeomorphism of $0 \, \in \, \fkg$ to $1 \, \in \, \mG$, and it is used to move functions and distributions, between $\fkg$ and $\mG$, e.g., ones which are $\mG$-invariant and eigendistributions for the center of ${\mathcal U}(\fkg )$.   \ When $\mF$ is a p--adic field, i.e., a local field of characteristic zero, and $\mG$ the $\mF$-rational points of a connected reductive group defined over $\mF$, and $\fkg$ the Lie algebra of $\mG$, the exponential map is only defined on a certain $\mG$-invariant, open and closed subset containing $0 \in \fkg$.  In terms of Moy--Prasad filtrations [{\reMPa},{\reMPb}], for $r \in {\mathbb R}$, set
$$
\aligned
\fkg_{r} \, :&= \, {\underset {x \, \in \, {\text{\tiny{\fontfamily{frc}\selectfont{B}}}}} \bigcup } \fkg_{x,r} \quad {\text{\rm{and}}} \quad \fkg_{r^{+}} \, :&= \, {\underset {x \, \in \, {\text{\tiny{\fontfamily{frc}\selectfont{B}}}} \, , \, s > r} \bigcup } \fkg_{x,s} \ .
\endaligned
$$

\noindent{We} recall these sets are $\mG$-domains, i.e.,  $\mG$-invariant open and closed subsets.  The set $\fkg_{0^{+}}$ is the set of topologically nilpotent elements in $\fkg$.  The assumption char$(\mF) \, = \, 0$ means, there exists $R \, \ge \, 0$ so that for $r \, > \, R$, the exponential map exp is defined on the $G$-domain $\fkg_{r}$ and for any $x \in {\text{\tiny{\fontfamily{frc}\selectfont{B}}}}$, exp takes $\fkg_{x,r}$ bijectively to the Moy--Prasad group $G_{x,r}$, and $\fkg_{r}$ bijectively to $\mG_{r}$.  In the best situation $R$ is $0$.

\medskip

We recall the two realizations [{\reBna}] of the Bernstein center ${\mathcal Z}(\mG )$:

\smallskip

\begin{itemize}
\item[$\bullet$] \ The geometrical realization. \ ${\mathcal Z}(\mG )$ is the algebra of $\mG$-invariant  essentially compact distributions on $\mG$ -- a distribution is essentially compact if $\forall \, f \in C^{\infty}_{0}(\mG )$, the convolutions $D \star f$ and $f \star D$ is in $C^{\infty}_{0}(\mG )$.

\smallskip

\item[$\bullet$] \ The spectral realization. \ ${\mathcal Z}(\mG )$ is the product \ ${\underset {\Omega} \prod } \ {\mathbb C}(\Omega )$ \ over the Bernstein components $\Omega$ of the algebras of (complex) regular functions ${\mathbb C}(\Omega )$.  

\end{itemize}

\smallskip

\noindent{For} a fixed Bernstein component $\Omega$, and a regular function $s \, \in \, {\mathbb C}(\Omega )$, it is known the distribution $s$ is representable by a locally integrable function supported, and locally constant on the regular set.  Of particular interest is the idempotent distribution $e_{\Omega}$ whose spectral realization is the constant function 1 on $\Omega$.  In the situation when $\mG$ is semisimple and the Bernstein $\Omega$ corresponds to an equivalence class of supercuspidal representations, then $e_{\Omega}$ is the distributional character of the class times its formal degree.   In this setting ($\mG$ semisimple), an important result of Dat [{\reDa}] states the distribution $e_{\Omega}$ is supported on the $\mG$-domain ${\mathcal C}$  of compact elements, i.e., elements which belong to a  compact subgroup of $\mG$.   We note the set of topologically unipotent elements ${\mathcal U}^{\text{\rm{top}}}$ is contained in ${\mathcal C}$.

\smallskip

Based on known examples, e.g., for $\SL(2)$ see [{\reMTa},{\reMTb}], if $s \in {\mathbb C}(\Omega )$ is viewed as a distribution, its support is generally not contained in ${\mathcal U}^{\text{\rm{top}}}$.  An intriguing question is whether it is possible to find an element in the span of finitely many ${\mathbb C}(\Omega )$ whose support is contained in ${\mathcal U}^{\text{\rm{top}}}$. 

\smallskip

In [{\reMPa,\reMPb}], the depth invariant $\rho (\pi)$ is defined for any irreducible smooth representation $\pi$, and it is known the depth is the same for all irreducible representation (classes) occurring in a Bernstein component.  Thus, a depth $\rho (\Omega )$ is attached to any Bernstein component $\Omega$.

\bigskip

Let $d \, \ge \, 0$ be the depth of a Bernstein component, and set 
$$
\aligned
e_{d} \ :&= \ {\underset {\rho (\Omega ) \, = \, d} \sum } \ e_{\Omega} \quad {\text{\rm{and}}} \quad 
\sigma_{d} \ := \ {\underset {\rho (\Omega ) \, \le \, d} \sum } \ e_{\Omega} \ . \\ 
\endaligned
$$

Here we show, for $\mG \, = \, \SL (2, \mF )$, the Bernstein center element $\sigma_{d}$ has support in the topological unipotent set 
$$
{\mathcal U}^{\text{\rm{top}}}_{d^{+}} \, = \, {\underset {x \, \in \, {\text{\tiny{\fontfamily{frc}\selectfont{B}}}}} \bigcup } \mG_{x,d^{+}}
$$ 

\noindent{Recall} \  (i) an element $y \in \mG$ is called split (resp.~elliptic), if its characteristic polynomial has distinct roots in $\mF$ (resp.~not in $\mF$), and (ii) the depth of an irreducible representations is a half-integer, i.e, in ${\frac12}{\mathbb N}$.


\begin{thm*}{\bf \ref{main-result-group}}{\bf{.}} \ \ Suppose $\mF$ is a p--adic field with odd residue characteristic, and $\mG \, = \, \SL (2,\mF )$.    For $d \, \in \, {\frac12}{\mathbb N}$, set $d^{+} \, := \, k \, + \, {\frac12}$.  \ Then, we have \ ${\text{\rm{supp}}} \, ( \,  {\mytotal}_{d} \, )  \ \subset \ {\mathcal U}^{\text{\rm{top}}}_{d^{+}}$, and on \ ${\mathcal U}^{\text{\rm{top}}}_{d^{+}}$:

\medskip

\begin{itemize}


{\mycolor

\item[$\bullet$] \ \ When  $d$ is integral:

$$
\mytotal_{d} \, ( \, y \, ) \, = \, 
{\text{\rm{\large $(q^2-1) \, q^{3d} $}}} \ 
\begin{cases}
\  \big( {\frac{2 \, q^{-d}}{| \, \alpha \, - \, \alpha^{-1} \, |_{\mF}}} \, - \, 1 \big)  \quad {\text{\rm{when $y$ is split with eigenvalues $\alpha$, $\alpha^{-1}$}}} \\ 
\ &\ \\
\ -1  \qquad {\text{\rm{when $y$ is elliptic}}} \\
\end{cases}
$$

\medskip

\item[$\bullet$] \ \ When $d$ is half-integral:

$$
\mytotal_{d} \, ( \, y \, ) \, = \, 
{\text{\rm{\large $(q^2-1) \, q^{3d+{\frac12}} $}}} \ 
\begin{cases}
\  \big( {\frac{2 \, q^{-(d+{\frac12})}}{| \, \alpha \, - \, \alpha^{-1} \, |_{\mF}}} \, - \, 1 \big) \qquad {\text{\rm{when $y$ is split with eigenvalues $\alpha$, $\alpha^{-1}$}}} \\
\ &\ \\
\ -1  \qquad {\text{\rm{when $y$ is elliptic}}} \\
\end{cases}
$$
}

\end{itemize}
\end{thm*}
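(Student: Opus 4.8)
The plan is to compute $\sigma_d$ from the spectral side by summing the idempotents $e_\Omega$ over all Bernstein components $\Omega$ of $\SL(2,\mF)$ of depth $\le d$, and then to identify the resulting distribution on the topologically unipotent locus by evaluating it as a locally integrable function. First I would enumerate the Bernstein components of $\SL(2,\mF)$ of a given depth: these are the principal-series components (with their depth governed by the conductor of the ramified character used in the induction) and the discrete-series components (the Steinberg component at depth $0$, plus the depth-positive supercuspidals, which for odd residue characteristic are the ones constructed from characters of elliptic tori, organized by Moy--Prasad depth). For each component one has an explicit formula for $e_\Omega$ as a locally integrable, locally constant function on the regular set; for the supercuspidal components $e_\Omega$ is the distributional character times the formal degree, and for these the Sally--Shalika discrete series character tables of $\SL(2)$ give closed-form values on split and elliptic regular elements. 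The tempered/principal-series idempotents are obtained by integrating the principal-series characters against the Plancherel density over the relevant component, which again is classical for $\SL(2)$.

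The key step is then to organize the sum $\sigma_d = \sum_{\rho(\Omega)\le d} e_\Omega$ into two groups — the contributions of depth exactly $0$ (Steinberg plus the depth-zero principal series and, in the odd-residue-characteristic $\SL(2)$ case, the depth-zero cuspidal representations attached to the anisotropic torus) and the contributions of each positive depth $0 < e \le d$ in $\frac12\bZ$ — and to evaluate each block on a fixed regular $y$ lying in $\mathcal U^{\mathrm{top}}_{d^+}$. For such $y$ the local constancy of the character functions means only finitely many "depth shells" can contribute, and Dat's theorem already guarantees that the support of each $e_\Omega$ sits inside the compact set $\mathcal C$; combining this with the depth bound should force $\mathrm{supp}(\sigma_d)\subset \mathcal U^{\mathrm{top}}_{d^+}$. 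On split $y$ with eigenvalues $\alpha,\alpha^{-1}$ the factor $|\alpha-\alpha^{-1}|_\mF$ enters through the Weyl integration / the denominators in the Sally--Shalika split-element character values, which explains the shape $\bigl(\tfrac{2q^{-d}}{|\alpha-\alpha^{-1}|_\mF}-1\bigr)$; on elliptic $y$ the split contribution is absent and the character values collapse to constants, yielding the flat value $-1$ after the overall normalization $(q^2-1)q^{3d}$ (resp. $(q^2-1)q^{3d+\frac12}$). I would verify the normalizing powers of $q$ by tracking the formal degrees of the supercuspidals of depth $e$ (which scale like a fixed power of $q$ in $e$) and the volumes of the Moy--Prasad subgroups $\mG_{x,e}$, and cross-check the base case $d=0$ against the already-known sum of the Steinberg projector with the depth-zero projectors.

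The main obstacle I anticipate is the bookkeeping of the geometric sum over positive depths: one must show that the alternating-looking contributions of successive depth shells telescope to the clean single term displayed, which requires precise normalization of each $e_\Omega$ (formal degrees, Plancherel measures, and the splitting of each depth level into finitely many components indexed by characters of tori of a given conductor) and careful use of the identity expressing a sum of shell-indicator functions against character values. A secondary subtlety is handling the half-integral depths separately, since at half-integral depth only certain (ramified) components occur, which is why the two cases in the statement differ by the half-power $q^{1/2}$ and by the shift $d\mapsto d+\tfrac12$ inside the split formula; I would treat the two parities in parallel, using the same telescoping mechanism but with the component count and formal-degree powers adjusted to the parity. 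Once the shell sum is evaluated, matching it against the claimed piecewise formula and confirming vanishing off $\mathcal U^{\mathrm{top}}_{d^+}$ is a finite check.
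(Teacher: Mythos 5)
The overall strategy — sum $e_\Omega$ over components of depth $\le d$, compute the principal series contributions from Moy--Tadi\'c and the cuspidal contributions from the Sally--Shalika tables together with formal degrees and Harish-Chandra's compact-induction formula, and organize the sum shell by shell — is indeed the paper's strategy, which proceeds by induction on depth adding $e_{k^+}$ to $\sigma_k$ at each step. But there is a genuine gap in your treatment of the support statement, and it concerns the main content of the theorem.

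You claim that Dat's theorem (support of each $e_\Omega$ lies in the compact set $\mathcal C$) ``combined with the depth bound'' forces $\mathrm{supp}(\sigma_d)\subset\mathcal U^{\mathrm{top}}_{d^+}$. That inference does not work. Dat's theorem only places the support in $\mathcal C$, and $\mathcal C$ is strictly larger than $\mathcal U^{\mathrm{top}}$: it also contains $-\mathrm I_{2\times2}\cdot\mathcal U^{\mathrm{top}}$ and the strongly regular compact locus $\mathcal C_{\mathrm{st\text{-}reg}}$, as in the paper's partition $\mathcal C=\mathcal U^{\mathrm{top}}\amalg(-\mathrm I_{2\times2}\cdot\mathcal U^{\mathrm{top}})\amalg\mathcal C_{\mathrm{st\text{-}reg}}$. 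The individual projectors $e_\Omega$, and even $e_0$ viewed as a sum of its principal-series and cuspidal pieces, are genuinely nonzero on $-\mathrm I_{2\times2}\cdot\mathcal U^{\mathrm{top}}$ and on $\mathcal C_{\mathrm{st\text{-}reg}}$; for instance the Sally--Shalika table gives nonzero values of the cuspidal characters there, and the unramified principal series idempotent is nonzero on all of $\mathcal C$. What actually produces the support restriction is an exact cancellation: for instance in the paper the unramified elliptic case with $\alpha\not\equiv\alpha^{-1}\bmod\wp_E$ yields $e_0(y)=(q-1)-(q-1)=0$, a cancellation between $e^{\mathrm{PS}}_0$ and $e_{\mathcal K}+e_{\mathcal K'}$, and in the depth-zero split case the $e_0$ value on $-\mathrm I\cdot\mathcal U^{\mathrm{top}}$ also collapses to $0$ only after combining \eqref{ps-zero} with the cuspidal character sums. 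Similarly, the passage from ``support in $\mathcal U^{\mathrm{top}}$'' to the sharper ``support in $\mathcal U^{\mathrm{top}}_{d^+}$'' is itself a shell cancellation: adding $e_{k^+}$ to $\sigma_k$ must cancel $\sigma_k$ on $\mathcal U^{\mathrm{top}}_{k^+}\setminus\mathcal U^{\mathrm{top}}_{(k^+)^+}$. None of this is implied by Dat's theorem or by the depth bound on the individual components; it has to be verified by the explicit entries of Tables 1--3 and \eqref{ps-greater-zero}. In other words, the support inclusion is the nontrivial new phenomenon being proved (cf.\ the discussion in the introduction about individual $s\in\bC(\Omega)$ generally not being supported in $\mathcal U^{\mathrm{top}}$), so it cannot be front-loaded as a soft consequence and then used to restrict attention to the topologically unipotent locus. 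Your mechanism for establishing the displayed formulas on $\mathcal U^{\mathrm{top}}_{d^+}$ is otherwise sound, but you need to carry out the full evaluation of each $e_\Omega$ on all of $\mathcal C$ (split, ramified elliptic, unramified elliptic, including the non-unipotent pieces) and observe the cancellation, rather than assume it.
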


We observe, for $\SL (2)$, the projector $\sigma_{0}$ is equal to the Steinberg character restricted to the topological unipotent set.  

\bigskip

As mentioned above, there is an $R \, \ge \, 0$ so that the exponential power series $\exp (X)$ is convergent when the eigenvalues of $X$ have normalize valuations greater than $R$, and for $r \, > \, R$, the exponential map is then a bijection between the set ${\mathcal N}^{\text{\rm{top}}}_{r} \, \subset \, \fkg$, and the set ${\mathcal U}^{\text{\rm{top}}}_{r} \, \subset \, \mG$.  In this ideal situation, we can then move the distributions in \eqref{main-result-group} to the Lie algebra.  For $d \, \in \, {\frac12}{\mathbb N}$, satisfying $d > R$, the Lie algebra distribution $\mytotal_{d}  \, \circ \, \myexp$ has support in $\fkg_{d^{+}}$, and we have the homogeneity relation 

\begin{equation}\label{homogeneity-relation-1}
( \, \mytotal_{d+1}  \, \circ \, \myexp \, ) \, ( \, \varpi Y \, ) \  = \  q^{3} \,  ( \, \mytotal_{d}  \, \circ \, \myexp \, ) \, ( \, Y \, ) \ .
\end{equation}

\bigskip

\noindent{Whence}, their Fourier transforms satisfy the homogeneity relation

\begin{equation}\label{homogeneity-relation-2}
\myFT \, ( \, \mytotal_{k+1}  \, \circ \, \myexp \, ) \, ( \, \varpi^{-1} Y \, ) \  = \ \myFT \,  ( \, \mytotal_{k}  \, \circ \, \myexp \, ) \, ( \, Y \, ) \ .
\end{equation}

\bigskip

\noindent{In} this regard, we show in the appendix the following:


\begin{prop*}{\bf \ref{appendix-prop}}{\bf{.}}   \ \ For $\fks \fkl (2)$, we have

\vskip 0.10in

\begin{itemize} 
\item[$\bullet$] The Fourier transforms $FT(1_{\fkg_{0}} )$ and $FT(1_{\fkg_{-{\frac12}}} )$ have support in the sets $\fkg_{0^{+}} := \fkg_{\frac12}$  and 
$\fkg_{ ({\frac12})^{+}} := \fkg_{1}$ respectively.  {\hfill} 

\noindent In particular, the support is contained in ${\mathcal N}^{\text{\rm{top}}}$. 

\vskip 0.10in

\item[$\bullet$] For $k \ge 1$, the Fourier transform
$FT(1_{\fkg_{-k}} )$ has support in $\fkg_{k^{+}} := \fkg_{k+\frac12}$.
\end{itemize}
\end{prop*}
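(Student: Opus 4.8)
The plan is to reduce the statement about Fourier transforms of characteristic functions of the lattices $\fkg_{-k}$ (and $\fkg_{-1/2}$) to an explicit self-duality computation for $\fksl(2)$ with respect to the chosen bilinear form and additive character. First I would fix the trace form $B(X,Y) = \tr(XY)$ on $\fksl(2)$ together with an additive character $\psi$ of $\mF$ of conductor $\calO_\mF$ (i.e. trivial on $\calO_\mF$, nontrivial on $\varpi^{-1}\calO_\mF$), so that the Fourier transform is $\widehat{f}(Y) = \int_{\fkg} f(X)\,\psi(B(X,Y))\,dX$ with $dX$ self-dual. The key linear-algebra input is the computation of the $B$-dual lattice of each Moy--Prasad lattice $\fkg_{x,r}$: for a point $x$ in the (one-dimensional) building $\text{\tiny{\fontfamily{frc}\selectfont B}}$ of $\SL(2)$, one has $(\fkg_{x,r})^{\vee} = \fkg_{x,(-r)^+}$ (this is the standard duality for Moy--Prasad filtrations with respect to a suitably normalized form). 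Taking the union over $x$, I would deduce that $\fkg_{-k}^{\vee} = \fkg_{k^+} = \fkg_{k+\frac12}$ and $\fkg_{-1/2}^{\vee} = \fkg_{1/2^+} = \fkg_1$, etc.

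Next I would use the elementary fact that the Fourier transform of the characteristic function of a lattice is a scalar multiple of the characteristic function of the dual lattice: for a compact open $\bZ_p$-lattice $L \subset \fkg$ with dual $L^\vee$, $\widehat{1_L} = \meas(L)\,1_{L^\vee}$. The difficulty is that $\fkg_{-k}$ is a \emph{union} of lattices $\fkg_{x,-k}$ over the building, not a single lattice, so $\widehat{1_{\fkg_{-k}}}$ is not literally a multiple of $1_{\fkg_{k+1/2}}$; rather, the claim is only about the support. The cleanest route is: $1_{\fkg_{-k}}$ is a limit (in an appropriate sense, or: $\fkg_{-k}$ is an increasing union) of the lattices $\fkg_{x,-k}$ as $x$ ranges over an apartment, and more usefully $\fkg_{-k}^+ = \bigcup_{x, s > -k}\fkg_{x,s}$, with $\fkg_{-k}$ sitting between consecutive members. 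I would instead argue directly on supports: $Y \in \supp \widehat{1_{\fkg_{-k}}}$ forces, for every test lattice $L$ small enough, $\psi(B(\cdot,Y))$ to be nonconstant-averaging to zero unless $Y$ pairs trivially with $\fkg_{-k}$; hence $Y \in \fkg_{-k}^{\vee} = \fkg_{k^+}$. Concretely, $\widehat{1_{\fkg_{-k}}}(Y) = \int_{\fkg_{-k}} \psi(B(X,Y))\,dX$ as an improper integral; restricting to a large lattice $\fkg_{x,-k} \subset \fkg_{-k}$ containing a cofinal family, the integral over $\fkg_{x,-k}$ vanishes unless $B(\fkg_{x,-k},Y) \subset \calO_\mF$, i.e. $Y \in (\fkg_{x,-k})^\vee = \fkg_{x,k^+}$; and a $Y$ lying outside $\fkg_{k^+} = \bigcup_x \fkg_{x,k^+}$ lies outside $\fkg_{x,k^+}$ for a cofinal family of $x$, killing the integral in the limit. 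This yields $\supp \widehat{1_{\fkg_{-k}}} \subset \fkg_{k+\frac12}$, and the same verbatim argument with $-k$ replaced by $-\frac12$ gives the half-integral case; the inclusion $\fkg_{1/2} = \fkg_{0^+} \subset \calN^{\text{\rm top}}$ and $\fkg_1 \subset \calN^{\text{\rm top}}$ are then immediate from the definition of the topologically nilpotent set.

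**The main obstacle** I anticipate is not conceptual but bookkeeping: pinning down the precise normalization of the bilinear form and additive character so that the Moy--Prasad duality reads $(\fkg_{x,r})^\vee = \fkg_{x,(-r)^+}$ on the nose (an off-by-$\varpi$ normalization shifts everything by an integer and breaks the stated indices), and handling carefully that $\fkg_{-k}$ is a union rather than a single lattice so that "integrate over the union" is legitimate and the support conclusion is the correct one (rather than the false statement that $\widehat{1_{\fkg_{-k}}}$ equals a constant times $1_{\fkg_{k+1/2}}$). A secondary point to verify is that in residue characteristic $2$ the trace form on $\fksl(2)$ can degenerate, which is exactly why the hypothesis odd residue characteristic (already in force in Theorem \ref{main-result-group}) is needed; I would note that $\fksl(2)$ with $p$ odd has a nondegenerate, self-dual-up-to-scalar trace form, so the duality goes through cleanly. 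Once the normalization is fixed, the argument is a short computation with Gauss-sum-type lattice integrals and poses no further difficulty.
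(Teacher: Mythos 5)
Your identification of the dual lattice $(\fkg_{x,r})^{\vee} = \fkg_{x,(-r)^{+}}$ under the trace form (with $\psi$ of conductor $\wp_\mF$ and $p$ odd) is correct, and the duality heuristic predicts the right answer. But the reduction to lattice duality has a genuine gap: you assert that $\fkg_{-k}$ is an increasing union of the lattices $\fkg_{x,-k}$ as $x$ ranges over an apartment, and this is false. For $\SL(2)$ these lattices are not nested: moving $x$ along the apartment loosens the valuation bound on one root space while tightening the bound on the other, so no $\fkg_{x,-k}$ contains another $\fkg_{x',-k}$, and none contains a cofinal subfamily. Consequently the improper integral over $\fkg_{-k}$ is not a monotone limit of integrals over the $\fkg_{x,-k}$, and the inference "each piece vanishes for $Y \notin \fkg_{k^{+}}$, hence the limit vanishes" does not go through. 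Attempting to repair this by inclusion--exclusion over finitely many $\fkg_{x_i,-k}$ fails as well: the cross term $1_{\fkg_{x,-k}\cap\fkg_{y,-k}}$ Fourier-transforms to a multiple of $1_{\fkg_{x,k^{+}}+\fkg_{y,k^{+}}}$, and the lattice sum $\fkg_{x,k^{+}}+\fkg_{y,k^{+}}$ is \emph{not} contained in $\fkg_{k^{+}}=\bigcup_z\fkg_{z,k^{+}}$ when $x\ne y$ (on an apartment one checks directly there is no single $z$ whose root-space bounds dominate both summands simultaneously). So those cross terms are individually nonzero outside $\fkg_{k^{+}}$, and the vanishing of $FT(1_{\fkg_{-k}})$ there is a cancellation among terms, invisible to pure lattice duality.

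The paper's argument is built around exactly this difficulty. It exhausts $\fkg_{0}=\{X:\det(X)\in\calR_\mF\}$ by the sets $\calT_{\ell}=\{X\in\fkg_{0}:a,b,c\in\wp^{-\ell}\}$, which are \emph{not} lattices since the determinant constraint is quadratic; uses $\Ad$-invariance of $FT(1_{\fkg_0})$ and $\GL(2,\mF)$-invariance to reduce to $Y$ in anti-diagonal form; and shows by an explicit case analysis on the valuations of the entries of $X$ that $\int_{\calT_{\ell}}\psi(\mytrace(XY))\,dX=0$ for $\ell$ large whenever $Y\notin\fkg_{1/2}$, the cancellations coming from integrating one entry at a time against a nontrivial character on a coset. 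The statement for general $k$ then follows, as you correctly note at the end, from $\fkg_{k-1}=\varpi^{-1}\fkg_{k}$ and homogeneity of the Fourier transform. The duality picture tells you what to expect, but you still need an oscillatory-integral computation on a non-lattice exhaustion to establish it.
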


For a general connected reductive p-adic group, when $R=0$ and other conditions, Kim [{\reKa},{\reKb}], showed, for $X$ in $\fkg_{\text{\normalsize{\rm{$({\frac{d}{2}})^{+}$}}}}$:

$$
\int_{\widehat{G}^{{\text{\tiny{\rm{{\,}temp}}}}}_{\le d}} \ \Theta_{\pi} ( \, \exp (X) \, ) \, d \mu_{_{\text{PM}}} \, ( \pi ) \ = \ 
FT(1_{\fkg_{-d}}) \, (X) \ .
$$

\noindent{The} integral is over the (classes of) irreducible tempered representations of depth less than or equal to $d$.  Thus, for $\SL (2)$, when $R \, = \, 0$ we have 
$$
\sigma_{d} \circ \exp  \ = \ FT(1_{\fkg_{-d}}) \qquad {\text{\rm{(both sides have support in $\fkg_{d^{+}}$).}}}
$$

\noindent{We} conjecture, for $\SL(2)$, and more generally for any connected reductive p-adic group, this identity to be true even when $R \, > \, 0$, as long as $d > R$. 


\bigskip

In October 2014, through correspondence with Roman Bezrukavnikov, the author became aware of unpublished work in-progress of Bezrukavnikov, Kazhdan, and Varshavsky in which they independently discovered and proved, for a general connected reductive p--adic group, the support of the projector $\sigma_{d}$ is in the topological unipotent set, that $\sigma_{0}$ is the restriction of the Steinberg character to the unipotent set, and the identification of the Fourier transforms $\myFT( \, \sigma_{d} \circ \exp \, )$.  A preprint [{\reBKV}] of their work became available in April 2015.

\vskip 0.10in

Motivation for considering the support of the depth zero projector 
$e_0$, the sum of the projectors $e_{\Omega}$ with $\rho (\Omega ) \, = \, 0$, aroused during a conversation the author had with Roger Howe in August 2013, and the author successfully verified the support is contained in ${\mathcal U}^{\text{\rm{top}}}_{0^{+}}$ and a formula for the values in December 2013.  Extension of the support and values of $\sigma_{d}$ to all depths was completed in March 2015 while the author was a visiting faculty at the University of Utah.  The author kindly thanks the hospitality of the Mathematics Department of the University of Utah, with special thanks to Dragan Mili{\v{c}}i{\'{c}}.  The author gratefully acknowledges useful conversations with Roman Bezrukavnikov, Roger Howe, 
Ju-Lee Kim, and Fiona Murnaghan.   The author gave workshop talks of the case $e_0$ at the Mathematical Research Institute of Oberwolfach, the University of Zagreb, and the University of Chicago, and thanks these institutions for their invitations.

\vskip 1.00in

\section{Notation}

\medskip

We set some notation.  \ Let $\mF$ denote a p-adic field (so of characteristic zero) .  Let ${\mathcal R}_{\mF}$ denote the ring of integers of $\mF$, let $\wp_{\mF}$ its prime ideal, and let $\varpi$ be a prime element.  Set $\bF_{q} \, = \, {\mathcal R}_{\mF}/\wp_{\mF}$ to be the residue field.  To be able to use the Sally-Shalika character tables [{\reSSa}], we assume the residue characteristic of $\mF$ is odd.

\medskip 

 Let $\ScptB$ be the Bruhat-Tits building $\myG \, = \, \SL (2, \mF )$.  The group $\GL (2, \mF )$, whence also $\myG$, acts on $\ScptB$. There are respectively two $\myG$-orbits, and one $\GL (2,\mF )$-orbit of vertices in $\ScptB$.   The maximal compact subgroups of $\myG$ are precisely the stabilizer subgroups $G_{x}$ (in $\myG$) of vertices $x$ in $\ScptB$; whence, there two conjugacy classes of maximal compact subgroups in $\myG$.   Let $x_0$ and $x_1$ be the vertices in $\ScptB$ so that 

\begin{equation}\label{x0x1}
\aligned
G_{x_0} \ = \ \SL (2, {\mathcal R}_{\mF}) \ \ , \quad  &{\text{\rm{and}}} \quad G_{x_1} \ = \  \left[ \begin{matrix} \varpi^{-1} &{\ 0 \ } \\ 0 & 1 \end{matrix} \right] \, \SL (2, {\mathcal R}_{\mF}) \, \left[ \begin{matrix} \varpi &{\ 0 \ } \\ 0 & 1 \end{matrix} \right] \\
\endaligned
\end{equation}

\medskip

\noindent are the familiar representatives of the two conjugacy classes of maximal compact subgroups of $\myG$. \ \ If $e$ is an edge (with vertices $y$ and $z$ and (midpoint) barycenter $b(e)$) in $\ScptB$, then the stabilizer (in $\myG$) of $e$ is an Iwahori subgroup equal to $G_{y} \cap G_{z} \ = \ G_{b(e)}$.

\medskip 

We note the two vertices $x_0$ and $x_1$ mentioned in \eqref{x0x1} are the vertices of an edge $e_{01} \in \ScptB$.  Let $x_{01} \, = \, b(e_{01})$ -- the barycenter of $e_{01}$.  \ The Iwahori subgroup $\mG_{x_{01}}$ equals:

\begin{equation}\label{iwahori-1}
G_{x_{01}} \ = \ G_{x_0} \ \cap \ G_{x_1} \ = \ \{ \ g \in \SL (2 , {\mathcal R}_{\mF}) \ | \ {\text{\rm{$g$ upper triangular mod $\wp_{\mF}$}}} \ \} \ . 
\end{equation}

\noindent{For} notational convenience, we set 

$$
{\mathcal K} \, = \, G_{x_0} \ \ , \quad  {\mathcal K}' \, = \, G_{x_1} \ \ , 
\quad  {\mathcal I} \, = \, {\mathcal K} \, \cap {\mathcal K}' \, = \, G_{x_{01}}
\ .
$$

\bigskip

Set $\fkg  \, = \, \fks \fkl (2, \mF)$.  For any $x \in \ScptB$, let  
$$
\myG_{x,r} \ \ {\text{\rm{and $r \, \ge \, 0$}}} \ \ \ {\text{\rm{and}}} \ \ \ \fkg_{x,r} \ \ {\text{\rm{and $r \in {\mathbb R}$}}}
$$

\noindent{be} the Moy--Prasad filtration subgroups. \ We recall: 

\medskip

\begin{itemize} 
\item[(i)] If $x \in \ScptB$ is a vertex, then the jumps in the filtration subgroups $G_{x,r}$ occur at integral $r$, i.e., 
$$
G_{x,r^{+}} \, = \, G_{x,r+1} \quad {\text{\rm{when}}} \quad r \in {\mathbb N}  
$$
\item[(ii)]  If $x \, = \, b(e)$ is the barycenter of an edge $e$, then the jumps in the filtration subgroups occur at half-integral $r$, i.e., 
$$
G_{x,r^{+}} \, = \, G_{x,r+{\frac12}} \quad {\text{\rm{when}}}  \quad r \, \in \, {\frac12} {\mathbb N} \ ,
$$
\end{itemize}
\medskip
\noindent{and} similarly for $\fkg_{x,r}$.  For the latter, we have 
$$
\fkg_{x,r+1} \ = \ \varpi \fkg_{x,r} \ . 
$$ 
\medskip

\noindent Take $\psi$ to be an additive character of $\mF$ which has conductor $\wp_{\mF}$. 

\medskip
 
\begin{itemize} 

\item[(iii)] For $2r \, \ge \, s$, the quotient group $G_{x,r}/G_{x,s}$ is abelian and canonically isomorphic to $\fkg_{x,r}/\fkg_{x,s}$.  The residual characteristic is odd assumption means the trace pairing  
$$
\fkg_{x,r}/\fkg_{x,s} \ \times \ \fkg_{x,-s^{+}}/\fkg_{x,-r^{+}} \ \rightarrow \ {\mF}/{\wp}_{\mF}
$$
allows us to identify the Pontryagin dual $(\fkg_{x,r}/\fkg_{x,s})^{\widehat{\ }}$ (whence $(\mG_{x,r}/\myG_{x,s})^{\widehat{\ }}$ with $\fkg_{x,-s^{+}}/\fkg_{x,-r^{+}}$.  A coset $\Xi \, = \, X \, + \, \fkg_{x,-r^{+}}$ yields the character $\psi_{\Xi}$ of 
$\fkg_{x,r}/\fkg_{x,s}$ given as:
$$
\psi_{\Xi} (Y) \ := \ \psi ( \, \mytrace (XY) \, ) \ .
$$

\noindent{We note}:  
\begin{itemize} 
\item[$\bullet$] For the vertex $x_0$: 
$$
\fkg_{x_{0},0}/\fkg_{x_{0},1} \ = \ \myFFsl2 \ .
$$
For a general vertex $x \in \ScptB$ a vertex, and $r \in {\mathbb N}$:
$$
(\fkg_{x,r}/\fkg_{x,(r+1)})^{\widehat{\ }} \ = \ \fkg_{x,-r}/\fkg_{x,(-r+1)} \ \simeq \ \fkg_{x,0}/\fkg_{x,1} \ \simeq \ \myFFsl2 
$$
The isomorphism $\fkg_{x,-r}/\fkg_{x,(-r+1)} \ \simeq \ \fkg_{x,0}/\fkg_{x,1}$ is the natural one given by scalar multiplication by $\varpi$.  The isomorphism $\fkg_{x,0}/\fkg_{x,1} \ \simeq \ \myFFsl2$ is up to a conjugation by by an element of $\myFFGL2$.  A coset 
$X \in \fkg_{x,-r}/\fkg_{x,(-r+1)}$ is, by definition, non-degenerate if, as an element in $\myFFsl2)$, it is non-nilpotent.  

\medskip

\item[$\bullet$] For the barycenter point $x_{01}$:
$$
\fkg_{x_{01},{\frac12}}/\fkg_{x_{01},1} \ = \ \big\{ \ \left[ \begin{matrix} \varpi \, a &b \\ \varpi \, c &-\varpi \, a \end{matrix} \right] \ | \ a, \, b, \, c \ \in \ {\mathcal R}_{\mF} \ \big\} \ / \ \big\{ \ \left[ \begin{matrix} \varpi \, a &\varpi \, b \\ \varpi^2 \, c &-\varpi \, a \end{matrix} \right] \ | \ a, \, b, \, c \ \in \ {\mathcal R}_{\mF} \ \big\} \ .
$$
\noindent{For} $d \in {\mathbb N}$:
$$
(\fkg_{x_{01},d+\frac12}/\fkg_{x_{01},(d+1)})^{\widehat{\ }} \ = \ \fkg_{x_{01},-d-{\frac12}}/\fkg_{x_{01},-d} \\
$$

\noindent{We} recall a coset 
\begin{equation}
X \ = \ \varpi^{-(d+1)} \left[ \begin{matrix} \varpi \, a &b \\ \varpi \, c &-\varpi \, a \end{matrix} \right] \ + \ \fkg_{x_{01},-d} \ \in \ \fkg_{x_{01},-d-{\frac12}}/\fkg_{x_{01},-d} \quad (a, \, b, \, c \ \in \ {\mathcal R}_{\mF})  \ ,
\end{equation}
\noindent{is} non-degenerate if $b$ and $c$ are both units.
\end{itemize}
\end{itemize}

\vskip 1.0in 


\section{Review of earlier results}

\medskip

\subsection{A result of J.~Dat on support} \ \ Suppose $\mG$ is a general connected reductive p--adic group.  A {\it compact element} $\gamma \in G$ is one which lies in a compact subgroup of $G$.  Set
$$
{\mathcal C} \ := \ {\text{\rm{set of compact elements.}}} 
$$

\noindent{The} following important result of Dat says the support of a projector $e_{\Omega}$ to a Bernstein component $\Omega$ is contained in ${\mathcal C}$. 

\smallskip

\begin{thm} \ {\text{\rm{(Dat, [{\reDa}])}}} \ Suppose  $\mG$ is a connected reductive p--adic group.  Let ${\mathcal C}$ be the set of compact elements in $\mG$.  For any Bernstein component $\Omega$, let $e_{\Omega}$ be the element of the Bernstein center which projects onto the component $\Omega$.  Then 
$$
{\text{\rm{supp}}}{\,}(e_{\Omega}) \ \subset \ {\mathcal C} \ .
$$
\end{thm}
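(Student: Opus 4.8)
The plan is to separate the statement into two parts: first, that $\operatorname{supp}(e_{\Omega})$ lies in the subgroup $\mG^{1}\subseteq\mG$ cut out by the rational characters of $\mG$, which I would prove formally by twisting with unramified characters; and second, that $\mG^{1}$ may be shrunk to $\mathcal{C}$, which I would prove by induction on $\dim\mG$ using Harish--Chandra descent to proper Levi subgroups. For the reduction, recall (as stated above for the regular functions $s\in{\mathbb C}(\Omega)$, hence for $e_{\Omega}$) that $e_{\Omega}$ is a $\mG$-invariant distribution represented by a locally integrable function on $\mG$ which is locally constant on the regular semisimple locus, and that $\mathcal{C}$ is a closed $\mG$-domain; since the regular semisimple locus has full Haar measure, it suffices to prove $e_{\Omega}(\gamma)=0$ for every regular semisimple $\gamma\notin\mathcal{C}$.

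\emph{Twisting step.} Write $\mG^{1}=\bigcap_{\chi\in X^{*}(\mG)}\{g\in\mG:|\chi(g)|_{\mF}=1\}$, and let $\eta$ be any unramified character of $\mG$. Since a character of $\mG$ is conjugation invariant and locally constant, the distribution $\eta\cdot e_{\Omega}\colon f\mapsto e_{\Omega}(\eta f)$ again belongs to the Bernstein center, and the multiplicativity of $\eta$ gives $(\eta\cdot e_{\Omega})\star(\eta f)=\eta\cdot(e_{\Omega}\star f)$; hence for every irreducible $\pi$ the element $\eta\cdot e_{\Omega}$ acts on $\pi$ exactly as $e_{\Omega}$ acts on $\pi\otimes\eta$. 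As twisting by an unramified character preserves the Bernstein component, $\pi\otimes\eta\in\Omega\iff\pi\in\Omega$, so $\eta\cdot e_{\Omega}$ and $e_{\Omega}$ act identically on all irreducibles, and therefore $\eta\cdot e_{\Omega}=e_{\Omega}$. Equivalently $e_{\Omega}$ annihilates $(\eta-1)\,C^{\infty}_{c}(\mG)$, and since $\eta-1$ is a nowhere-vanishing locally constant function on the open set $\{g:\eta(g)\neq 1\}$, this forces $\operatorname{supp}(e_{\Omega})\cap\{g:\eta(g)\neq 1\}=\emptyset$. Intersecting over all unramified $\eta$ yields $\operatorname{supp}(e_{\Omega})\subseteq\mG^{1}$. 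In particular $e_{\Omega}(\gamma)=0$ whenever $\gamma\notin\mG^{1}$; moreover a regular semisimple element that is elliptic in $\mG$ and lies in $\mG^{1}$ is automatically compact (its noncompact part would lie in $Z(\mG)\cap\mG^{1}$, a compact group), so the elliptic case is already settled.

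\emph{Descent step.} We induct on $\dim\mG$. When $\mG$ is anisotropic modulo its centre one has $\mG^{1}=\mathcal{C}$, so the twisting step concludes. In general, let $\gamma\notin\mathcal{C}$ be regular semisimple; by the twisting step we may assume $\gamma\in\mG^{1}$, so $\gamma$ is non-elliptic in $\mG$. Write $\gamma=z\gamma_{0}$ with $z$ the noncompact part of $\gamma$, lying in the maximal split subtorus of the maximal torus $T$ through $\gamma$, and $\gamma_{0}$ compact. Then $z\in\mG^{1}$ (because $\gamma_{0}$ is compact) is noncompact and non-central (a noncompact central element cannot lie in $\mG^{1}$), so $\mM:=\operatorname{Cent}_{\mG}(z)$ is a proper Levi subgroup; it contains $T$, hence $\gamma$, and $\gamma$ still lies outside $\mathcal{C}(\mM)=\mathcal{C}\cap\mM$. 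Since $\operatorname{Cent}_{\mG}(\gamma)=T\subseteq\mM$, Harish--Chandra descent identifies the germ of $e_{\Omega}$ near $\gamma$ with that of the image of $e_{\Omega}$ under the canonical homomorphism $\mathcal{Z}(\mG)\to\mathcal{Z}(\mM)$; on the spectral side this image is the finite sum $\sum_{\Omega'}e^{\mM}_{\Omega'}$ over the components $\Omega'$ of $\mM$ whose cuspidal support maps to that of $\Omega$. By the inductive hypothesis each $e^{\mM}_{\Omega'}$ is supported in $\mathcal{C}(\mM)$, hence vanishes near $\gamma$; therefore $e_{\Omega}(\gamma)=0$, completing the induction.

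The main obstacle is the descent step: one must justify that the geometric germ of the distribution $e_{\Omega}$ near a semisimple element whose connected centralizer is the proper Levi $\mM$ is computed by the image of $e_{\Omega}$ under $\mathcal{Z}(\mG)\to\mathcal{Z}(\mM)$. This rests on Harish--Chandra's descent for invariant distributions in multiplicative form, on the functoriality of the Bernstein center under Jacquet restriction and parabolic induction, and on the regularity of $e_{\Omega}$, so that the germ identity can be verified on the regular set — where van Dijk's induced-character formula makes both sides explicit and the occurring Jacobian is invertible. The twisting step and the reduction, by contrast, are formal.
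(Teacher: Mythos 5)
The paper you are working from does not prove this statement; it quotes it as Dat's theorem with a citation to [D] and moves on. So there is no ``paper's own proof'' to compare your argument against. What I can do is assess the proposal on its own terms.

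Your twisting step is correct and essentially formal, as you say: for unramified $\eta$ the distribution $\eta\cdot e_{\Omega}$ is again in the Bernstein center (one checks $(\eta\cdot e_{\Omega})\star f = \eta\cdot\bigl(e_{\Omega}\star(\eta^{-1}f)\bigr)$, so it is essentially compact and $G$-invariant), it acts on every irreducible exactly as $e_{\Omega}$ does because $\Omega$ is stable under unramified twists, and the spectral description then forces $\eta\cdot e_{\Omega}=e_{\Omega}$. Your passage from ``annihilates $(\eta-1)C^{\infty}_{c}(G)$'' to vanishing of the support on $\{\eta\neq1\}$, and the observation that an elliptic regular semisimple element of $G^{1}$ is automatically compact, are both fine. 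Your reduction to regular semisimple elements via local integrability of $e_{\Omega}$ and the fact that ${\mathcal C}$ is a $G$-domain is also sound.

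The genuine gap is exactly where you locate it, but I would weigh it more heavily than you do. The assertion that the germ of $e_{\Omega}$ near a regular semisimple $\gamma$ with $\mathrm{Cent}_{G}(\gamma)\subset M$ is computed by the image of $e_{\Omega}$ under the spectrally-defined homomorphism ${\mathcal Z}(G)\to{\mathcal Z}(M)$ is not a consequence of classical Harish--Chandra descent for invariant distributions. Harish--Chandra descent transports the germ of an invariant distribution on $G$ to an $M$-invariant distribution on a neighbourhood of $\gamma$ in $M$ (up to an explicit Weyl discriminant Jacobian), but it says nothing a priori about which element of ${\mathcal Z}(M)$ one obtains; the compatibility of the geometric realization of the Bernstein center with parabolic restriction is a theorem, not a formality. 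Verifying it ``on the regular set using van Dijk's formula'' is not routine either, since $e_{\Omega}$ is an infinite sum of characters on each principal series component and one must control that sum. In short, the proposal reduces Dat's theorem to a compatibility statement that is itself of roughly the same depth as the theorem being proved — indeed much of the content of Dat's paper is precisely establishing this kind of compatibility for central idempotents. As a roadmap the proposal is reasonable, but the descent identity needs an independent proof before this counts as a proof of the theorem rather than a reformulation of it.
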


\bigskip

\subsection{A result of Moy--Prasad on depths} \ \  If $\pi$ is an irreducible smooth representation of a connected reductive p--adic group, let $\rho (\pi )$ denote its depth as defined in [{\reMPa},{\reMPb}].  We recall the following result on depths of representations which implies we can define the depth of a Bernstein component.

\begin{thm} \ {\text{\rm{(Moy--Prasad, [{\reMPb}], Theorem 5.2)}}} \ Suppose  $\mG$ is a connected reductive p--adic group, $\mM \mN$ is a parabolic subgroup, and $\sigma$ an irreducible smooth representation of $\mM$.   If $\pi$ is any irreducible subquotient of ${\text{\rm{Ind}}}^{\mG}_{\mM  \mN} \, ( \sigma )$, then $\rho (\pi) \, = \, \rho (\sigma)$.
\end{thm}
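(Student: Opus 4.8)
The plan is to deduce $\rho(\pi)=\rho(\sigma)$ from the compatibility of parabolic induction with the Moy--Prasad depth decomposition of the category of smooth representations. Recall from [{\reMPa},{\reMPb}] that for a reductive $p$-adic group $\mH$ and each $d\in{\frac12}{\mathbb N}$ the category $\mathfrak R(\mH)$ of smooth representations splits as a product of Serre subcategories $\mathfrak R_{\le d}(\mH)\times\mathfrak R_{>d}(\mH)$, where $\mathfrak R_{\le d}(\mH)$ is the full subcategory of those $V$ generated by $\bigcup_{x}V^{\mH_{x,d^{+}}}$ over $x$ in the building $\mathcal{B}(\mH)$; an irreducible $\tau$ lies in $\mathfrak R_{\le d}(\mH)$ iff $\rho(\tau)\le d$, while $V\in\mathfrak R_{>d}(\mH)$ iff $V^{\mH_{x,d^{+}}}=0$ for every $x$; and $\mathfrak R_{\le d}(\mH)$ is generated by the objects $\mycInd{\mH}{\mH_{x,d^{+}}}(\one)$. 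Granting this, it suffices to prove $(\ast)$: for every $d$, the functor $\myInd{\mG}{\mM\mN}$ sends $\mathfrak R_{\le d}(\mM)$ into $\mathfrak R_{\le d}(\mG)$ and $\mathfrak R_{>d}(\mM)$ into $\mathfrak R_{>d}(\mG)$. Indeed, an irreducible $\sigma$ with $\rho(\sigma)=d_{0}$ lies in $\mathfrak R_{\le d_{0}}(\mM)$ and in $\mathfrak R_{>d}(\mM)$ for every $d<d_{0}$; by $(\ast)$ the representation $\myInd{\mG}{\mM\mN}(\sigma)$ lies in $\mathfrak R_{\le d_{0}}(\mG)$ and in all the $\mathfrak R_{>d}(\mG)$, $d<d_{0}$; since these are Serre subcategories the subquotient $\pi$ inherits both memberships, which (the depths being discrete) forces $\rho(\pi)=d_{0}$.

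For the inclusion $\myInd{\mG}{\mM\mN}(\mathfrak R_{\le d}(\mM))\subseteq\mathfrak R_{\le d}(\mG)$ I would argue through the generators. Since $\myInd{\mG}{\mM\mN}$ is exact and commutes with direct sums and every object of $\mathfrak R_{\le d}(\mM)$ is a quotient of a direct sum of the $\mycInd{\mM}{\mM_{y,d^{+}}}(\one)$ with $y\in\mathcal{B}(\mM)$, it is enough to show $\myInd{\mG}{\mM\mN}(\mycInd{\mM}{\mM_{y,d^{+}}}(\one))\in\mathfrak R_{\le d}(\mG)$. Writing $\overline{\mN}$ for the opposite unipotent radical and $r_{\overline{\mN}}$ for the associated Jacquet functor, Bernstein's second adjointness together with Frobenius reciprocity for compact induction gives a natural isomorphism $\Hom_{\mG}(\myInd{\mG}{\mM\mN}(\mycInd{\mM}{\mM_{y,d^{+}}}(\one)),W)\cong(r_{\overline{\mN}}W)^{\mM_{y,d^{+}}}$ for every smooth $W$. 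Now one invokes Casselman's lemma: for an open compact $\mathrm K$ admitting an Iwahori factorization relative to $(\overline{\mN},\mM,\mN)$, the canonical map $W^{\mathrm K}\to(r_{\overline{\mN}}W)^{\mM\cap\mathrm K}$ is surjective. This applies with $\mathrm K=\mG_{y,d^{+}}$ because, for $y\in\mathcal{B}(\mM)\subset\mathcal{B}(\mG)$, the Moy--Prasad group $\mG_{y,d^{+}}$ does have such a factorization and $\mM\cap\mG_{y,d^{+}}=\mM_{y,d^{+}}$; hence if $W\in\mathfrak R_{>d}(\mG)$ then $W^{\mG_{y,d^{+}}}=0$, so $(r_{\overline{\mN}}W)^{\mM_{y,d^{+}}}=0$, so the $\Hom$ above vanishes. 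A module with no nonzero morphism to any object of $\mathfrak R_{>d}(\mG)$ must lie in $\mathfrak R_{\le d}(\mG)$, and the inclusion follows.

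For the inclusion $\myInd{\mG}{\mM\mN}(\mathfrak R_{>d}(\mM))\subseteq\mathfrak R_{>d}(\mG)$ I would compute $\big(\myInd{\mG}{\mM\mN}(\sigma)\big)^{\mG_{x,d^{+}}}$ directly. As $\mG/\mM\mN$ is compact and $\mG_{x,d^{+}}$ is open, evaluation at a set of representatives identifies this space with $\bigoplus_{g\in\mM\mN\backslash\mG/\mG_{x,d^{+}}}\sigma^{\,\overline{\mM}_{g}}$, where $\overline{\mM}_{g}$ is the image in $\mM$ of $\mM\mN\cap\mG_{gx,d^{+}}$ (the normalizing modulus character being trivial on this compact group, no twist survives). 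The essential point is that $\overline{\mM}_{g}=\mM_{x',d^{+}}$, where $x'\in\mathcal{B}(\mM)$ is the image of $gx$ under the Moy--Prasad-compatible retraction $\mathcal{B}(\mG)\to\mathcal{B}(\mM)$ attached to the parabolic $\mM\mN$. Hence if $\sigma\in\mathfrak R_{>d}(\mM)$, so that $\sigma^{\mM_{z,d^{+}}}=0$ for all $z\in\mathcal{B}(\mM)$, every summand vanishes and $\big(\myInd{\mG}{\mM\mN}(\sigma)\big)^{\mG_{x,d^{+}}}=0$ for every $x\in\mathcal{B}(\mM)$. Since $\mM$ contains a maximal $\mF$-split torus of $\mG$, an apartment of $\mathcal{B}(\mG)$ lies inside $\mathcal{B}(\mM)$, and as all apartments of $\mathcal{B}(\mG)$ are $\mG$-conjugate, every $x\in\mathcal{B}(\mG)$ is $\mG$-conjugate into $\mathcal{B}(\mM)$; therefore the vanishing holds for all $x\in\mathcal{B}(\mG)$, i.e.\ $\myInd{\mG}{\mM\mN}(\sigma)\in\mathfrak R_{>d}(\mG)$, which completes $(\ast)$.

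The hard part is not this formal skeleton but the structural input it rests on: the interaction of the Moy--Prasad filtrations of $\mM$ and $\mG$ at points of $\mathcal{B}(\mM)\subset\mathcal{B}(\mG)$ --- the Iwahori-type factorization $\mG_{y,r}=(\overline{\mN}\cap\mG_{y,r})(\mM\cap\mG_{y,r})(\mN\cap\mG_{y,r})$, the equality $\mM\cap\mG_{y,r}=\mM_{y,r}$, and the identification of the $\mM$-part of $\mG_{z,r}$ with $\mM_{z',r}$ for the retracted point $z'$ --- and the use of these in Casselman's lemma. These facts are essentially built into the construction of the filtrations in [{\reMPa}], but establishing them cleanly requires care with the affine-root-group description of the groups $\mG_{y,r}$ and with the location of the jumps, and at depth $d=0$ one must also watch the connected components. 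Everything else --- exactness and adjointness of $\myInd{\mG}{\mM\mN}$ and $r_{\overline{\mN}}$, the Bruhat/Iwasawa finiteness of $\mM\mN\backslash\mG/\mG_{x,d^{+}}$, conjugacy of apartments, and the abstract formalism of the depth decomposition --- is routine.
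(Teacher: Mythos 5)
The paper does not prove this statement: it is quoted as Theorem 5.2 of Moy--Prasad [{\reMPb}] and invoked as a black box, so there is no proof in the paper against which to compare your attempt. Judging it on its own merits, your proposal is a correct and well-organized argument, phrased in a modern categorical language that is not literally the language of [{\reMPb}] but rests on exactly the same structural facts.

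Your reduction is sound: granting the depth decomposition $\mathfrak R(\mH)=\mathfrak R_{\le d}(\mH)\times\mathfrak R_{>d}(\mH)$ (which exists without presupposing that depth is constant on Bernstein components, so there is no circularity), it suffices to show $\myInd{\mG}{\mM\mN}$ respects both factors. For the $\mathfrak R_{\le d}$ inclusion, your route through second adjointness, Frobenius reciprocity for the projective generators $\mycInd{\mM}{\mM_{y,d^{+}}}(\one)$, and Casselman's surjectivity $W^{\mG_{y,d^{+}}}\twoheadrightarrow(r_{\overline{\mN}}W)^{\mM_{y,d^{+}}}$ is clean and correct. For the $\mathfrak R_{>d}$ inclusion, your Mackey computation is right, and the key identification $\overline{\mM}_g=\mM_{x',d^{+}}$ does hold; the ``retraction'' is exactly the retraction of $\mathcal B(\mG)$ onto the apartment $A\subset\mathcal B(\mM)$ from the chamber at infinity determined by $\overline{\mN}$, together with the fact that $N$ acts transitively on the apartments asymptotic to that chamber: writing $gx=n\cdot x'$ with $n\in\mN$ and $x'\in A$, one has $\mM\mN\cap\mG_{gx,d^{+}}=n(\mM\mN\cap\mG_{x',d^{+}})n^{-1}$, conjugation by $n$ is trivial on $\mM=\mM\mN/\mN$, and the Iwahori factorization at $x'\in\mathcal B(\mM)$ gives the image $\mM_{x',d^{+}}$. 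I would spell this out rather than leave ``Moy--Prasad-compatible retraction'' undefined, since a retraction of $\mathcal B(\mG)$ onto $\mathcal B(\mM)$ is not a standard off-the-shelf object. You are also right that the genuine content is the suite of filtration compatibilities --- $\mG_{y,r}=(\overline{\mN}\cap\mG_{y,r})\mM_{y,r}(\mN\cap\mG_{y,r})$ and $\mM\cap\mG_{y,r}=\mM_{y,r}$ for $y\in\mathcal B(\mM)$ --- which is precisely what is established in [{\reMPa}]; once those are in hand your skeleton is complete. Compared with the original argument of [{\reMPb}], which works directly with unrefined minimal $K$-types and Jacquet modules of irreducibles, your version pays the overhead of the categorical decomposition but gains a statement uniform over the whole category and avoids any separate case analysis at depth zero beyond the filtration facts themselves.
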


\noindent{Let} $d \, \ge \, 0$ be the depth of a Bernstein component, and set 
$$
\aligned
e_{d} \ :&= \ {\underset {\rho (\Omega ) \, = \, d} \sum } \ e_{\Omega} \quad {\text{\rm{and}}} \quad 
\sigma_{d} \ := \ {\underset {\rho (\Omega ) \, \le \, d} \sum } \ e_{\Omega} \ . \\ 
\endaligned
$$

As mentioned in the introduction, when $\mG \, = \, \SL (2)$, in the remainder of this manuscript, we show $\sigma_{d}$ has support in the topological unipotent set ${\mathcal U}^{\text{\rm{top}}}$, and indeed in the smaller $\mG$-domain ${\mathcal U}^{\text{\rm{top}}}_{d^{+}}$.

\bigskip
\bigskip

\subsection{A partition of the compact elements ${\mathcal C}$ of $\SL(2)$}  \ \  We partition ${\mathcal C}$ into three subsets:

$$
{\mathcal C} \ \ = \ \ {\mathcal U}^{\text{\rm{top}}} \ \ \coprod \ \  
-{\text{\rm{I}}}_{2 \times 2} \, {\mathcal U}^{\text{\rm{top}}} \ \ \coprod \ \  
{\mathcal C}_{\text{\rm{st-reg}}}
$$

\noindent Here, ${\mathcal C}_{\text{\rm{st-reg}}}$ is the set of `strongly regular' elements, i.e., those elements whose eigenvalues are not congruent to each other modulo the prime.

\vskip 1.0in

\section{Principal Series projectors for $\SL(2)$}

\medskip

\noindent For $\SL(2)$, Moy--Tadi{\'{c}} [{\reMTa},{\reMTb}] explicitly computed  the projectors $e_{\Omega}$ for principal series components.  To state the results, we normalize Haar measure on $\SL (2)$ so that
\begin{equation}\label{haar-measure-normalization}
\meas ( \, \SL (2, {\mathcal R}_{\mF} ) \, ) \ = \ 1 \ .
\end{equation}

\noindent{We} enumeration the Principal Series Bernstein components as: \ (i)  
$\Omega (\{ \chi , \chi^{-1} \})$, where $\chi$ is a character of 
${\mathcal R}^{\times}_{\mF}$ with $\chi \, \neq \, \chi^{-1}$, \ (ii) $\Omega (\sgn )$, with $\sgn$ the order two character of ${\mathcal R}^{\times}_{\mF}$, and \ (iii)  $\Omega \, = \, \Omega_{\text{\rm{triv}}}$, the Bernstein component of irreducible representations with non-zero Iwahori-fixed vectors. \ Let $f(\chi )$ denote the conductor of $\chi$.  \ The Principal Series projectors are:

$$
\aligned
{\text{\rm{{\bf Regular PS}  \ \ $f(\chi ) \, = \, d+1$  }}} {\hskip 0.30in} \ & \\  
e_{\Omega (\{ \chi , \chi^{-1} \})} ( \ y \ )
\ &= \ \begin{cases} 
\ \ (q+1) \ q^d \ \dsize {\frac{\chi (\alpha ) + \chi (\alpha^{-1} )}{ | \, \alpha - \alpha^{-1} \, |_{F} }} \ , \\ 
\ \qquad {\text{\rm{$y$ split with eigenvalues $\alpha$, $\alpha^{-1}$}}} \\
\ \ 0 \quad {\text{\rm{ otherwise}}}
\end{cases}
&\ \\
{\text{\rm{\bf Sgn PS}}} {\hskip 1.8in} \ & \\  
e_{\Omega (\sgn )} ( \ y \ )
\ &= \ \begin{cases} 
\ \ (q+1) \ \dsize {\frac{ \sgn ( \, \alpha \, )}{ | \, \alpha - \alpha^{-1} \, |_{F} }} \ \ , \\
\ \qquad {\text{\rm{$y$ split with eigenvalues $\alpha$, $\alpha^{-1}$}}} \\
\ \ 0 \quad {\text{\rm{ otherwise}}}
\end{cases}
&\ \\
&\ \\
{\text{\rm{\bf Unramified PS (Iwahori fixed vectors)}}} {\hskip -0.7in} \ & \\
e_{\Omega } ( \ y \ )
\ &= \ \begin{cases} 
\ \ \dsize {\frac{ 2 \, q }{ | \, \alpha - \alpha^{-1} \, |_{F} }} \ - \ (q-1) \ \ , \\
\ \qquad {\text{\rm{$y$ split with eigenvalues $\alpha$, $\alpha^{-1}$}}} \\
\ - (q-1) \quad {\text{\rm{ $y$ elliptic }}}  \\
\end{cases}
\endaligned
$$

\medskip

\noindent{Let} $e^{\text{PS}}_{d}$ be the sum of the principal series depth $d$ Bernstein projectors.  \ \  

\eject

\noindent{For} $d \, = \, 0$ we have:


{\mycolor

\begin{equation}\label{ps-zero}
e^{\text{PS}}_{0} \, ( y ) \ = \ (q-1) \ 
\begin{cases}
\ \ {\frac{(q+2)}{| \alpha - \alpha^{-1} |}} \ - \ 1  \\
{\hskip 0.50in} {\text{\rm{when \ $y \in {\mathcal U}^{\text{\rm{top}}}$ \ is split with eigenvalues $\alpha$, $\alpha^{-1}$ }}} \\
\ \ {\frac{1}{| \alpha - \alpha^{-1} |}} \ - \ 1  \\
{\hskip 0.50in} {\text{\rm{when \ $y \in -{\text{\rm{I}}}_{2 \times 2} \, {\mathcal U}^{\text{\rm{top}}}$ \ is split with eigenvalues $\alpha$, $\alpha^{-1}$ }}} \\
{\hskip 0.22in} 0  {\hskip 0.20in} {\text{\rm{when \ $y \in {\mathcal C}_{\text{\rm{st-reg}}}$ \ is split }}} \\
{\hskip 0.05in} -1  {\hskip 0.18in} {\text{\rm{when \ $y \in {\mathcal C}$ \ is elliptic }}} \\
\end{cases}
\end{equation}

}

\bigskip

\noindent For $d \, > \, 0$, each principal series Bernstein projector $e_{\Omega}$ has support in the split set and consequently the support of $e^{\text{PS}}_{d}$ is in the split set too.  For $y \in \SL (2)$ a (regular) split element, let $\alpha$, $\alpha^{-1}$ be the eigenvalues of $y$.  \ We have:
 

{\mycolor

\begin{equation}\label{ps-greater-zero}
e^{\text{PS}}_{d} \, ( y ) \ = \ (q+1) \, q^d \, (q-1) \, q^{d-1} \ 
\begin{cases}
\ \ {\frac{(-1)}{| \alpha - \alpha^{-1} |}} \ = \ - \, q^{d} \quad  {\text{\rm{when $\alpha$, $\alpha^{-1}$ in $(1+\wp^{d}_{\mF}) \backslash (1+\wp^{d+1}_{\mF}) $}}} \\
\ \\
\ \ {\frac{(q-1)}{| \alpha - \alpha^{-1} |}} \quad  {\text{\rm{when $\alpha$, $\alpha^{-1}$ in $(1+\wp^{d+1}_{\mF}) $}}} \\
\ \\
\ \ \ \ 0 {\hskip 0.47in}   {\text{\rm{otherwise}}} \\ 
\end{cases}
\end{equation}
}

\vskip 0.50in


\section{The projector $e_{0}$}

Set 

\begin{equation}\label{e-cusp-0-defintion}
e^{\text{\rm{cusp}}}_{0} \ = \  {\underset  {\begin{matrix} {\text{\rm{\small{$\pi$ cuspidal}}}} \\ {\text{\rm{\small{depth 0}}}} \end{matrix}} \sum } e_{\pi}  \ . 
\end{equation}
 
\noindent{In} this section, we determine, with the aid of the Sally-Shalika tables [{\reSSa}], the values of $e^{\text{\rm{cusp}}}_{0}$.  To conveniently use their tables we use their normalization of Haar measure, i.e., $\meas (\mG_{x_{0}} ) \, = \, 1$, as in \eqref{haar-measure-normalization}.

\medskip

\subsection{Irreducible cuspidal representations of depth zero} \quad Let ${\mathcal K} \, = \, G_{x_{0}}$, and ${\mathcal K}' \, = \, G_{x_{1}}$.  We recall ${\mathcal K}/G_{x_{0},1} \, \simeq \, \SL (2,\myFF ) \, \simeq \, {\mathcal K}'/G_{x_{1},1}$.

\smallskip

\begin{prop} \hfil

\smallskip

\begin{itemize}
\item[(i)] If $\kappa$ is an irreducible cuspidal representation of $\SL (2,\myFF )$, let $\kappa_{x_{0}}$ and $\kappa_{x_{1}}$ denote its inflation to ${\mathcal K} = \mG_{x_{0}}$ and ${\mathcal K}' = \mG_{x_{1}}$ respectively.  \ Then, the representations $\mycInd{\mG}{\mathcal K} (\kappa_{x_{0}} )$ and $\mycInd{\mG}{{\mathcal K}'} (\kappa_{x_{1}} )$ are irreducible supercuspidal representations of $\mG$.  Furthermore, the supercuspidal representations induced from the group ${\mathcal K}$ are inequivalent to those induced from the group ${\mathcal K}'$.

\smallskip

\item[(ii)] Any irreducible supercuspidal representation $(\pi , V_{\pi})$ of depth zero is equivalent to a $\mycInd{\mG}{\mathcal K} (\kappa_{x_{0}} )$ or a $\mycInd{\mG}{{\mathcal K}'} (\kappa_{x_{1}} )$

\smallskip

\item[(iii)] Normalize Haar measure on $\SL (2)$ so $\meas ( {\mathcal K} ) \, = \, 1$. If $\kappa$ is a cuspidal representation of $\SL (2,\myFF )$ inflated to ${\mathcal K}$, and $\pi \, = \, \mycInd{\mG}{{\mathcal K}} ( \kappa )$, then 
$$
{\text{\rm{the formal degree}}} \ \ d_{\pi} \ \ = \ \ {\text{\rm{degree}}}(\kappa ) \ . 
$$ 
\noindent{Similarly} for ${\mathcal K}'$.

\end{itemize}
\end{prop}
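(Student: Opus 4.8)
\medskip

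The plan is to establish the three parts in turn, using the Mackey intertwining criterion for compact induction, the vanishing of the Jacquet module of a cuspidal representation of $\SL (2,\myFF )$, and the tree structure of the building $\ScptB$ of $\mG$, on which $\mG$ preserves vertex types. For (i), I apply the standard irreducibility criterion: if $K\subseteq\mG$ is compact open and $\sigma$ an irreducible smooth representation of $K$ with $\Hom_{K\cap gKg^{-1}}(\sigma ,{}^{g}\sigma )=0$ for every $g\in\mG\setminus K$, then $\mycInd{\mG}{K}(\sigma )$ is irreducible, and it is supercuspidal since, $K$ being compact, its matrix coefficients are compactly supported. By Mackey--Frobenius, $\End_{\mG}\big(\mycInd{\mG}{{\mathcal K}}(\kappa_{x_{0}})\big)\cong\bigoplus_{g\in{\mathcal K}\backslash\mG/{\mathcal K}}\Hom_{{\mathcal K}\cap g^{-1}{\mathcal K}g}\!\big(\kappa_{x_{0}},(\kappa_{x_{0}})^{g}\big)$, where $(\kappa_{x_{0}})^{g}(x)=\kappa_{x_{0}}(gxg^{-1})$; the coset $g=e$ contributes $\bC$, so I must kill the rest. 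As $\mG$ preserves types, ${\mathcal K}\backslash\mG/{\mathcal K}$ is indexed by the even distance $n=d(x_{0},g^{-1}x_{0})$, and I may take $g$ so that $g^{-1}x_{0}$ is the vertex $[\,{\mathcal R}_{\mF}e_{1}\oplus\wp_{\mF}^{n}e_{2}\,]$ on the standard apartment. The crux is that for $n\ge 1$ the unipotent matrices $\bigl[\begin{smallmatrix}1&b\\0&1\end{smallmatrix}\bigr]$ with $b\in{\mathcal R}_{\mF}$ fix this vertex and, in the adapted lattice basis $(e_{1},\varpi^{n}e_{2})$, lie in its first Moy--Prasad step $G_{g^{-1}x_{0},1}$; hence the subgroup ${\mathcal K}\cap G_{g^{-1}x_{0},1}$ of ${\mathcal K}\cap g^{-1}{\mathcal K}g$ --- on which $(\kappa_{x_{0}})^{g}$ is trivial, since $gxg^{-1}\in G_{x_{0},1}$ there and $\kappa_{x_{0}}$ is inflated from ${\mathcal K}/G_{x_{0},1}$ --- surjects onto the unipotent radical $U^{+}(\myFF )$ of a Borel of ${\mathcal K}/G_{x_{0},1}\simeq\SL (2,\myFF )$. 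Any intertwiner $T$ then satisfies $T\kappa (u)=T$ for all $u\in U^{+}(\myFF )$, hence annihilates the span of the vectors $\kappa (u)w-w$; that span is all of $V_{\kappa}$ because the $U^{+}(\myFF )$-Jacquet module of the cuspidal $\kappa$ vanishes, so $T=0$. Thus $\End_{\mG}\big(\mycInd{\mG}{{\mathcal K}}(\kappa_{x_{0}})\big)=\bC$ and the representation is irreducible; the argument for ${\mathcal K}'$ is identical.

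For the inequivalence in (i), run the same Mackey computation on $\Hom_{\mG}\!\big(\mycInd{\mG}{{\mathcal K}}(\kappa_{x_{0}}),\mycInd{\mG}{{\mathcal K}'}(\kappa'_{x_{1}})\big)$ for arbitrary cuspidals $\kappa ,\kappa'$ of $\SL (2,\myFF )$: its summands are indexed by the \emph{odd} distances $d(x_{1},gx_{0})\ge 1$, so $x_{0}$ and $g^{-1}x_{1}$ are always distinct vertices; the subgroup ${\mathcal K}\cap G_{g^{-1}x_{1},1}$ again contains the rational unipotent matrices $\bigl[\begin{smallmatrix}1&b\\0&1\end{smallmatrix}\bigr]$, $b\in{\mathcal R}_{\mF}$, and surjects onto a $U^{+}(\myFF )$, and the cuspidality of $\kappa$ again forces every Hom-space to vanish. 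Hence no $\mycInd{\mG}{{\mathcal K}}(\kappa_{x_{0}})$ is isomorphic to any $\mycInd{\mG}{{\mathcal K}'}(\kappa'_{x_{1}})$, which is the asserted inequivalence (all these representations being irreducible supercuspidal by the previous paragraph).

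For (ii), I invoke the Moy--Prasad classification of depth-zero representations [{\reMPa},{\reMPb}]: an irreducible smooth $\pi$ of depth $0$ contains an unrefined minimal $K$-type $(G_{x,0},\tau )$ with $\tau$ inflated from the reductive quotient $G_{x,0}/G_{x,0^{+}}$ of a parahoric, and when $\pi$ is supercuspidal $\tau$ is cuspidal. For $\mG=\SL (2,\mF )$ the parahorics are, up to conjugacy, $G_{x_{0}}$ and $G_{x_{1}}$ (reductive quotient $\SL (2,\myFF )$) and the Iwahori ${\mathcal I}$ (reductive quotient the torus $\myFF^{\times}$, which has no cuspidal representation), so $\pi$ contains $(G_{x_{i}},\kappa_{x_{i}})$ for some $i\in\{0,1\}$ and some cuspidal $\kappa$ of $\SL (2,\myFF )$. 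Frobenius reciprocity gives $0\ne\Hom_{G_{x_{i}}}(\kappa_{x_{i}},\pi )=\Hom_{\mG}\!\big(\mycInd{\mG}{G_{x_{i}}}(\kappa_{x_{i}}),\pi \big)$, and since the source is irreducible by (i) and $\pi$ is irreducible, $\pi\cong\mycInd{\mG}{G_{x_{i}}}(\kappa_{x_{i}})$.

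For (iii), normalize so $\meas ({\mathcal K})=1$ and realize $\pi=\mycInd{\mG}{{\mathcal K}}(\kappa )$ in its standard unitary model of functions $f:\mG\to V_{\kappa}$ with $f(kg)=\kappa (k)f(g)$ and support finite modulo ${\mathcal K}$; let $f_{0}$ be supported on ${\mathcal K}$ with $f_{0}(k)=\kappa (k)v$ for a unit vector $v\in V_{\kappa}$, so that $\|f_{0}\|=1$ and the diagonal matrix coefficient is $g\mapsto\langle\kappa (g)v,v\rangle$ on ${\mathcal K}$ and $0$ off ${\mathcal K}$. The defining relation $\int_{\mG}|\langle\pi (g)f_{0},f_{0}\rangle |^{2}\,dg=d_{\pi}^{-1}\|f_{0}\|^{4}$ then reduces to $\int_{{\mathcal K}}|\langle\kappa (g)v,v\rangle |^{2}\,dg=d_{\pi}^{-1}$, and Schur orthogonality on the finite group ${\mathcal K}/G_{x_{0},1}\simeq\SL (2,\myFF )$ evaluates the left side as $\meas ({\mathcal K})/\deg (\kappa )$; hence $d_{\pi}=\deg (\kappa )/\meas ({\mathcal K})=\deg (\kappa )$, and the case of ${\mathcal K}'$ is identical. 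The one non-elementary ingredient, and the step I expect to carry the real weight, is the input used in (ii) --- that the minimal $K$-type of a depth-zero \emph{super}cuspidal is a cuspidal representation of the reductive quotient of a maximal parahoric --- where the argument genuinely rests on [{\reMPa},{\reMPb}]; parts (i) and (iii) come down only to the short Mackey and Schur computations sketched above.
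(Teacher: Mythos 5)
The paper itself does not prove this proposition: it is simply ``recalled,'' with the substance deferred to [MPa, MPb] and standard depth-zero theory. So there is no paper proof to compare against line by line, and your blind reconstruction is a welcome fleshing-out. The overall structure --- Mackey intertwining killed by the vanishing of the $U^{+}(\bF_{q})$-Jacquet module of $\kappa$ for parts (i) and the inequivalence, the Moy--Prasad depth-zero classification for (ii), and Schur orthogonality on ${\mathcal K}/G_{x_{0},1}\simeq\SL (2,\bF_{q})$ for the formal degree in (iii) --- is exactly the standard route, and all the concrete computations (the shape of ${\mathcal K}\cap G_{g^{-1}x_{0},1}$, the surjection onto $U^{+}(\bF_{q})$, the type-preservation argument giving the parity of $d(x_{0},gx_{0})$, and the evaluation $\int_{\mathcal K}|\langle\kappa (g)v,v\rangle|^{2}dg=\meas ({\mathcal K})/\deg\kappa$) are correct.

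One point to tighten in (ii): the phrase ``the Iwahori, whose reductive quotient the torus $\bF_{q}^{\times}$ has no cuspidal representation'' is not quite right --- a torus has no proper parabolic, so its characters are vacuously cuspidal in the finite-group-of-Lie-type sense. The reason the Iwahori is excluded is different: an unrefined minimal $K$-type of a depth-zero \emph{supercuspidal} must live on a maximal parahoric, i.e., $x$ must be a vertex, because if $\pi$ contained $(G_{x_{01},0},\chi )$ its Jacquet module along the Borel would be nonzero (this is precisely what [MPa, MPb] prove), contradicting supercuspidality. Replacing your parenthetical justification by that vanishing-of-Jacquet-module argument makes (ii) airtight; everything else in the proposal stands as written.
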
 

\smallskip
We briefly review the cuspidal representations of $\SL (2,\myFF )$:

\smallskip

\begin{itemize}
\item[$\bullet$]  Let $T \subset \SL (2,\myFF )$ be the (elliptic) torus of order $(q+1)$.  Take $\beta$ to be a primitive root of $T$, set $\zeta \, = \, e^{\frac{2 \pi \sqrt{-1}}{(q+1)}}$, and for integral $0 \, \le \, i \, \le \, q$, let $\theta_{i} \in \widehat{T}$ be the character $\theta_{i}(\beta ) \, = \, \zeta^{i}$.  For $0 \, < \, i$, the character $\theta_{i}$ is conjugate under the normalizer $N_{\mG}(T)$ to $\theta_{(q+1)-i}$.

\smallskip

\item[$\bullet$] For $i = 1, 2, \dots , q$, there is a cuspidal representation 
$\sigma_{i}$ of $\SL (2, \myFF )$ of dimension $(q-1)$.  Let $\chi_{i}$ be the character of $\sigma_i$.   Then,  $\chi_{i} = \chi_{(q+1)-i}$.  For $i \neq {\frac{(q+1)}{2}}$, the character is irreducible, and for $i = {\frac{(q+1)}{2}}$, the character is the sum of two irreducible characters $\eta_{1}, \, \eta_{2}$ of degree ${\frac{q-1}{2}}$.  \ Let $\epsilon \in \myFF^{\times}$ be a non-square.  In all cases, the support of $\chi_{i}$ is on the elements: 

\smallskip

\begin{equation}
\myId \ , \quad -\myId \ , \quad \beta^k \ , \quad {\text{\rm{$u_1 = \left[ \begin{matrix} 1 &1 \\ 0 &1 \end{matrix} \right]$ \ , \quad $u_{\epsilon} = \left[ \begin{matrix} 1 &\epsilon \\ 0 &1 \end{matrix} \right]$ \ , \quad $-u_{1}$ \ , \quad $-u_{\epsilon}$ }}}  
\end{equation}

\noindent with values:

\medskip

\end{itemize}

\begin{center}
\begin{tabular}{ | c | c | c | c | c | c | c | c | }
    \hline 
${\begin{matrix} \ \\ \ \end{matrix}}$ &\myId &-\myId &$\beta^k$ &$u_1$ &$u_{\epsilon}$ &$-u_1$ &$-u_{\epsilon}$ \\
    \hline
${\begin{matrix} \ \\ \ \end{matrix}} \chi_{i}$ &$(q-1)$ &$(-1)^i (q-1)$ 
&$-( \, \zeta^{ik} \ + \ \zeta^{-ik} \, )$ &$-1$ &$-1$ &$(-1)^{(i+1)}$ &$(-1)^{(i+1)}$ \\
    \hline
\end{tabular}
\end{center}

\medskip

\noindent{Set} $\pi_{i} := \mycInd{\mG}{\mathcal K}(\sigma_{i})$, and let $\Theta_{i}$ be the character of $\pi_{i}$.   Note that $\pi_{i}(-\myId )$ is $(-1)^i$ times the identify operator.   If we use ${\mathcal K}'$ instead of ${\mathcal K}$, we can define analogous representations $\pi_{i}'$ and characters $\Theta_{i}'$.  Set 

\begin{equation}
\aligned
e_{\mathcal K} \ :&= \ ( \, q \, - \, 1 \, ) \ {\underset {1 \le i \le {\frac{(q-1)}{2}}} \sum }  \Theta_{i} \ \ + \ \ {\frac{(q-1)}{2}} \ \Theta_{({\frac{(q+1)}{2}})}\ = \  {\frac{(q-1)}{2}} \  {\underset {1 \le i \le q} \sum } \  \Theta_{i} \\
e_{{\mathcal K}'} \ :&= \ ( \, q \, - \, 1 \, ) \ {\underset {1 \le i \le {\frac{(q-1)}{2}}} \sum }  \Theta_{i}' \ \ + \ \ {\frac{(q-1)}{2}} \ \Theta_{({\frac{(q+1)}{2}})}' \ = \  {\frac{(q-1)}{2}} \  {\underset {1 \le i \le q} \sum } \  \Theta_{i}' \ , 
\endaligned
\end{equation}
\noindent{so} $e_{0} \, = \, e^{\text{\rm{PS}}}_{0} \, + \, e_{\mathcal K} \, + \, e_{{\mathcal K}'}$. 

\bigskip

\subsection{$e_{0}$ on split regular elements}  \ \  Suppose $y \in \mG$ is a regular split compact element with eigenvalues $\alpha, \, \alpha^{-1}$.  From Tables 2 and 3 (pages 1235-1236) of Sally-Shalika [{\reSSa}]:

\begin{equation}
\Theta_{i} (y) \ = \ \begin{cases}
\ 0 &{\text{\rm{when $\alpha \not\equiv \alpha^{-1} {\text{\rm{ mod }}} \wp$}}} \\
\ \\
\ {\dfrac{1}{| \, \alpha \, - \, \alpha^{-1} \, |}} \ - \ 1 &{\text{\rm{when $\alpha \equiv 1 {\text{\rm{ mod }}} \wp$}}} 
\end{cases}
\end{equation}

\noindent{Similarly} for the character $\Theta_{i}'$.  We deduce 
 
\begin{equation}
e_{\mathcal K} \, (y) \ = {\dfrac{(q-1)}{2}} \  \ \begin{cases}
\ 0 &{\text{\rm{when $\alpha \not\equiv \alpha^{-1} {\text{\rm{ mod }}} \wp$}}} \\
\ \\
\ (-1) \, \Big( \, {\dfrac{1}{| \, \alpha \, - \, \alpha^{-1} \, |}} \ - \ 1 \, \Big) &{\text{\rm{when $\alpha \equiv -1 {\text{\rm{ mod }}} \wp$}}} \\
\ \\
\ q \, \Big( \, {\dfrac{1}{| \, \alpha \, - \, \alpha^{-1} \, |}} \ - \ 1 \, \Big) &{\text{\rm{when $\alpha \equiv 1 {\text{\rm{ mod }}} \wp$}}} \ ,
\end{cases}
\end{equation}
\noindent{and} whence
\begin{equation}
e_{0}(y) \ = \ (q-1) \, (q+1) \ \begin{cases}
\ 0 &{\text{\rm{unless $\alpha \equiv 1 {\text{\rm{ mod }}} \wp$}}} \\
\\
\ \Big( \, {\dfrac{2}{| \, \alpha \, - \, \alpha^{-1} \, |}} \ - \ 1 \, \Big) &{\text{\rm{when $\alpha \equiv 1 {\text{\rm{ mod }}} \wp$}}} \ .
\end{cases} 
\end{equation}

\bigskip

\bigskip


\subsection{$e_{0}$ on ramified elliptic elements}  \ \  Suppose $y \in \mG$ is a ramified elliptic element, i.e., its eigenvalues $\alpha, \, \alpha^{-1}$ belong to a ramified quadratic extension $E$.  We note either $y$ or $-y$ is topologically unipotent.  We assume $y$ topologically unipotent, i.e., $\alpha \, \equiv \, 1 \ {\text{\rm{mod}}} \ \wp_{E}$.  From Table 2 (page 1235) of Sally-Shalika [{\reSSa}]:

\begin{equation}
\Theta_{i} ( y ) \ = \ -1 \ \ , \quad {\text{\rm{therefore}}} \quad \Theta_{i} ( -y ) \ = \ -1 \, (-1)^i \ ,
\end{equation}
\noindent{and so} 
\begin{equation}
\aligned
e^{\text{\rm{cusp}}}_{0}(y) \ &= \ -(q-1)q \qquad {\text{\rm{and}}} \qquad e^{\text{\rm{cusp}}}_{0}(-y) \ &= \ (q-1) \ .
\endaligned
\end{equation} 

\noindent{We deduce}

\begin{equation}
{\dfrac{1}{(q-1)(q+1)}} \, e_{0} (y) \ = \ 
\begin{cases}
\ 0 &{\text{\rm{ when  $y$ is not topologically unipotent}}}  \\
\ \\
\ -1 &{\text{\rm{ when  $y$ is topologically unipotent}}} \ .
\end{cases}
\end{equation}

\bigskip

\subsection{$e_{0}$ on unramified elliptic elements}  \ \  Suppose $y \in \mG$ is a unramified elliptic element, i.e., its eigenvalues $\alpha, \, \alpha^{-1}$ belong to a unramified quadratic extension $E$.  We consider the two cases depending on whether the two eigenvalues are congruent modulo $\wp_{E}$. 

\medskip

{\fontfamily{phv}\selectfont{Case $\alpha \ \not\equiv \ \alpha^{-1} \ {\text{\rm{mod}}} \ \wp_{E}${\,}:}}  \ \ We note there are two $\mG$-conjugacy classes of elements which have eigenvalues $\alpha, \, \alpha^{-1}$.  One class has non-empty intersection with ${\mathcal K}$, and empty intersection with ${\mathcal K}'$, and vice versa for the other class.  We assume $y \in {\mathcal K}$, and find,   using Harish-Chandra's formula [{\reHCa}] for the character of the compactly induced representation:
$$
\aligned
e_{\mathcal K} \, (y) \ &= \ {\dfrac{(q-1)}{2}} \ \mycInd{\mG}{\mathcal K} \ \big( {\underset {1 \le i \le q} \sum } \Theta_{\sigma_{i}} \big) \ (y) \ = \ (q-1) \ \ , \ \ {\text{\rm{and}}} \ \ e_{{\mathcal K}'} \, (y) \ &= \ 0 \ .
\endaligned
$$
\noindent{If} $y \in {\mathcal K}'$, we have the obvious analogous situation.   Therefore,  
$$
e_{0}(y) \ = \ ( \, e^{\text{\rm{PS}}}_{0} \, + \, e_{\mathcal K} \, + \, e_{{\mathcal K}'} \, ) \, (y) \ = \ (q-1) \, - \, (q-1) \ =  \ 0 \ \quad {\text{\rm{when \  $\alpha \ \not\equiv \ \alpha^{-1} \ {\text{\rm{mod}}} \ \wp_{E}$}}}\ .
$$

\medskip

{\fontfamily{phv}\selectfont{Case $\alpha \ \equiv \ \alpha^{-1} \ {\text{\rm{mod}}} \ \wp_{E}${\,}:}}  \ \  Here $\alpha \, \equiv \, \pm 1 \ {\text{\rm{mod}}} \ \wp_{E}$.  We assume $\alpha \, \equiv \, 1$, so $y$ is topologically unipotent.  Let $\sgn_{E}$ denote the class-field character of $\mF^{\times}$ of the unramified quadratic extension $E$.  By Table 2 of Sally-Shalika [{\reSSa}], the values of $\Theta_{i}(y)$ and $\Theta_{i}'(y)$ in some order are:
$$
-1 \ \pm \ \sgn_{E} (\gamma ) \ {\dfrac{1}{| \, \alpha \, - \, \alpha^{-1} \, |}} \ .
$$ 
\noindent{Here} $\gamma \, = \, \pm \alpha \, - \, \alpha^{-1}$ is the (imaginary) part of $\alpha$.  Note $\sgn_{E}(\pm 1) \, = \, 1$. So, 

$$
( \, \Theta_{i} \, + \, \Theta_{i}' \,) \, (y) \ = \ 2 \quad {\text{\rm{and}}} \quad ( \, \Theta_{i} \, + \, \Theta_{i}' \,) \, (-y) \ = \ (-1)^{i}2 \ .
$$ 

\noindent{It} follows:

$$
\aligned
( \, e_{\mathcal K} \, + \, e_{{\mathcal K}'} \, ) \, (y) \ &= \ {\dfrac{(q-1)}{2}} \, q \, (-2) \ = \ - \, (q-1) \, q \ \ , \ \ {\text{\rm{and}}} \\
( \, e_{\mathcal K} \, + \, e_{{\mathcal K}'} \, ) \, (-y) \ &= \ {\dfrac{(q-1)}{2}} \, (-1) \, (-2) \ = \ (q-1) \ .
\endaligned
$$

\noindent{Thus},

$$
\aligned
e_{0} \, (y) \ &= \ - (q-1)(q+1) \ \ , \ \ {\text{\rm{and}}} \quad e_{0} \, (-y) \ &= \ 0 \quad {\text{\rm{when $\alpha \, \equiv \, 1 \ {\text{\rm{mod}}} \ \wp_{E}$}}} \, .
\endaligned 
$$

\bigskip

\eject

\subsection{Summary Table of $e_{0}$} \hfil

\vskip 0.10in

\begin{center}
\begin{tabular}{ | c | l | }
    \hline 
\multicolumn{2}{| c |}{$\begin{matrix} {\hskip 6.0in} \\ {\text{\rm{ {\fontfamily{phv}\selectfont{Table 1}} \quad depth zero }}} \\ \ \end{matrix}$} \\
    \hline
\multicolumn{2}{| c |}{$y$ has eigenvalues $\alpha$, $\alpha^{-1}$ \ : \qquad value of \ ${\dfrac{1}{(q-1)(q+1)}} \ e_{0}(y) \begin{matrix} \ \\ \ \\ \ \end{matrix}$}   \\
    \hline
    \hline
\ &\ \\
{$y$ split} &$
\begin{cases}
\ 0 &{\text{\rm{when $\alpha \not\equiv 1 {\text{\rm{ mod }}} \wp$}}} \\
\\
\ \Big( \, {\dfrac{2}{| \, \alpha \, - \, \alpha^{-1} \, |}} \ - \ 1 \, \Big) &{\text{\rm{when $\alpha \equiv 1 {\text{\rm{ mod }}} \wp$}}} \ .
\end{cases}$
 \\
\ &\ \\
    \hline
\ &\ \\
{$\begin{matrix}  {\text{\rm{$y$ elliptic}}} \\ {\text{\rm{or unramified}}} \end{matrix}$} &$\begin{cases}
\ 0 &{\text{\rm{ when  $y$ is not topologically unipotent}}}  \\
\ \\
\ -1 &{\text{\rm{ when  $y$ is topologically unipotent}}} \ .
\end{cases}$
 \\
\ &\ \\
    \hline
\end{tabular} {\hfil} 
\end{center}


\vskip 1.00in 


\section{Integral depth supercuspidal representations}

\medskip

\subsection{The partition of supercuspidal representations by the groups ${\mathcal K}$ and ${\mathcal K}'$} \quad We abbreviate the filtration subgroups ${\mathcal K} \, = \, G_{{x_{0}},r}$ as ${\mathcal K}_{r}$ ($r \in {\mathbb N}$), and similarly we abbreviate the filtration subgroups of ${\mathcal K}' \, = \, G_{x_{1}}$.

\medskip

Suppose $d$ is a positive integer.   We recall if $(\pi , V_{\pi})$ is an irreducible supercuspidal representation $(\pi , V_{\pi})$ of depth $d$, then either 
$$
V^{{\mathcal K}_{d+1}}_{\pi} \ \neq \ \{ \, 0 \, \} \quad , \ \ {\text{\rm{(exclusive) or}}} \qquad V^{{\mathcal K}_{d+1}'}_{\pi} \ \neq \ \{ \, 0 \, \}
$$
In what follows, we assume $V^{{\mathcal K}_{d+1}}_{\pi} \ \neq \ \{ \, 0 \, \}$ and remark the obvious transposition of results holds when  $V^{{\mathcal K}_{d+1}'}_{\pi} \ \neq \ \{ \, 0 \, \}$.

\medskip

The subgroup ${{\mathcal K}_{d+1}}$ is normal in ${\mathcal K}$, and therefore 
$V^{{\mathcal K}_{d+1}}_{\pi}$ is ${\mathcal K}$-invariant, and so ${\mathcal K}_{d}$-invariant.  We note $({\mathcal K}_{d}/{\mathcal K}_{d+1})^{\widehat{\ }} \ = \ \fkg_{x_{0},-d}/\fkg_{x_{0},(-d+1)} \ \simeq\ \myFFsl2$, and we recall:

\smallskip

\begin{itemize} 
\item[(i)] If $(\pi , V_{\pi})$ is an irreducible supercuspidal representation of depth $d$ with $V^{{\mathcal K}_{d+1}}_{\pi} \ \neq \ \{ \, 0 \, \}$, then the characters of ${\mathcal K}_{d}/{\mathcal K}_{d+1}$ which appear $V^{{\mathcal K}_{d+1}}_{\pi}$ have the form $\phi_{\Xi}$ where the coset $\Xi \in \fkg_{x_{0},-d}/\fkg_{x_{0},(-d+1)} \ \simeq\ \myFFsl2$ is an elliptic element in  $\myFFsl2$, i.e., has irreducible characteristic polynomial.  By Clifford theory, the set of $\phi_{\Xi}$'s which appear in $V^{{\mathcal K}_{d+1}}_{\pi}$ form a single ${\mathcal K}$-orbit in $\fkg_{x_{0},-d}/\fkg_{x_{0},(-d+1)}$.

\smallskip

\item[(ii)] The Adjoint action of ${\mathcal K}$ on $\myFFsl2$ has $\dfrac{(q-1)}{2}$ orbits of elliptic elements, and each orbit contains $(q-1)q$ elliptic elements.

\smallskip

\item[(iii)] If $\phi_{\Xi}$ is a character of ${\mathcal K}_{d}/{\mathcal K}_{d+1}$ attached to an elliptic coset of $\Xi \in \fkg_{x_{0},-d}/\fkg_{x_{0},(-d+1)} \ \simeq\ \myFFsl2$, then 
$$
\mycInd{\mG}{{\mathcal K}_d} \ ( \, \phi_{\Xi} \, )
$$

\noindent{is} a finite length (completely reducible) supercupsidal representation.   If $(\pi , V_{\pi})$  is as in part (i), i.e.,  $V^{{\mathcal I}_{d^{+}}}_{\pi}$ contains the (non-degenerate) character $\phi_{\Xi}$, then by Frobenius reciprocity:

$$
\Hom_{\mG} ( \, V_{\pi} \, , \, \mycInd{\mG}{{\mathcal K}_{d}} \ (  \phi_{\Xi}  ) \, ) \ \neq \ \{ \, 0 \, \} \ .
$$

\noindent{Furthermore}: 

\smallskip

\begin{itemize} 
\item[$\bullet$] Up to isomorphism, $\mycInd{\mG}{{\mathcal K}_{d}} \ ( \phi_{\Xi}  )$ contains $(q+1)q^{(d-1)}$ {distinct} classes of irreducible supercuspidal representations $(\sigma , V_{\sigma})$.  In particular, the number of distinct irreducible supercuspidal representations of depth $d$ which are induced from ${\mathcal K}$ is:

\begin{equation}\label{integral-depth-numbers}
{\dfrac{(q-1)}{2}}  \, (q+1) \, q^{(d-1)}
\end{equation}

\smallskip

\item[$\bullet$] The formal degree of any irreducible $(\sigma , V_{\sigma} )$ occurring in $\mycInd{\mG}{{\mathcal K}_{d}} \ ( \phi_{\Xi}  )$ is:

\begin{equation}\label{integral-depth-formal-degree}
d_{\sigma} \ = \ {\dfrac{1}{\meas ({\mathcal K} )}} \, (q-1) \, q \, q^{(d-1)} 
\end{equation}

\smallskip

\item[$\bullet$] The multiplicity in $\mycInd{\mG}{{\mathcal K}_{d}} \ ( \phi_{\Xi}  )$ of any $(\sigma , V_{\sigma} )$ occurring in it is $q^{(d-1)}$; in particular is independent of $\sigma$.

\end{itemize}

\smallskip

\item[(iv)] For an elliptic character $\phi_{\Xi}$ of ${\mathcal K}_{d}/{\mathcal K}_{d+1}$, let $S_{\Xi}$ denote the set of $(q+1) \, q^{(d-1)}$ classes of irreducible supercuspidal representations occurring in $\mycInd{\mG}{{\mathcal K}_{d}} \ ( \phi_{\Xi}  )$, and let $\Theta_{\Xi}$ denote the character of the representation $\mycInd{\mG}{{\mathcal I}_{d}} \ ( \phi_{\Xi}  )$.  We have:

$$
\Theta_{\Xi} \ = \ q^{(d-1)} \ {\underset {\sigma \in S_{\Xi}} \sum} \ \Theta_{\sigma} \ .
$$
\noindent{By} Harish-Chandra's character formula [{\reHCa}] for induction from an open compact subgroup, $\Theta_{\Xi}$ is supported on $\myAd (\myG) \, ({\mathcal K}_{d})$.  If the cosets $\Xi_1$ and $\Xi_2$ belong to the same ${\mathcal K}$-orbit, then $\mycInd{\mG}{{\mathcal I}_{d}} \ ( \phi_{\Xi_1}  )$ and $\mycInd{\mG}{{\mathcal I}_{d}} \ ( \phi_{\Xi_2}  )$ are equivalent representations, and therefore $\Theta_{\Xi_1} \ = \ \Theta_{\Xi_2}$.

\end{itemize}

\medskip

Set 

\begin{equation}\label{sum-unramified-characters}
\tau \ := \ {\underset {\Xi} \sum } \ \phi_{\Xi} \quad {\text{\rm{the sum of the elliptic characters of ${\mathcal K}_{d}$ (modulo ${\mathcal K}_{d+1}$),}}}
\end{equation}

\noindent{and} let $\Theta_{\tau}$ denote the character of the induced representation $\mycInd{\myG}{{\mathcal K}_{d}} (\tau )$. \  We have:

\begin{equation}\label{cusp-integral-sum-a}
\aligned
\Theta_{\tau} \ &= \ {\underset {\Xi} \sum } \ \Theta_{\Xi} \ = \ {\underset {\Xi} \sum } \ q^{(d-1)} \ {\underset {\sigma \in {S_{\Xi}}} \sum } \ \Theta_{\sigma}  \ = \ (q-1) \, q \ q^{(d-1)} \ {\underset {\rho(\sigma ) \, = \, d} {{\sum}_{\mathcal K}} } \Theta_{\sigma} \ , \\
\endaligned
\end{equation}

\noindent{and} so,

\begin{equation}\label{cusp-integral-sum-b}
\aligned
{\dfrac{\Theta_{\tau}}{\meas ({\mathcal K})}} \ &= \ {\dfrac{(q-1) \, q \ q^{(d-1)}}{\meas ( {\mathcal K} )}}  \ {\underset {\rho(\sigma ) \, = \, d} {{\sum}_{\mathcal K}} } \Theta_{\sigma} \ = \ {\underset {\rho(\sigma ) \, = \, d} {{\sum}_{\mathcal K}} } d_{\sigma} \, \Theta_{\sigma} \ .\\
\endaligned
\end{equation} 

\noindent{The} sum ${{\sum}_{\mathcal K}}$ is over the classes of irreducible supercuspidal representations of depth $d$ which can be induced from ${\mathcal K}$.  
\medskip

For the compact group ${\mathcal K}'$, with the obviously transposed construction, we have the similar identity: 

\begin{equation}\label{cusp-integral-sum-c}
\aligned
{\dfrac{\Theta_{{\tau}'}}{\meas ({\mathcal K}')}} \ &= \ {\underset {\rho(\sigma ) \, = \, d} {{\sum}_{{\mathcal K}'}} } d_{\sigma} \, \Theta_{\sigma} \ .\\
\endaligned
\end{equation} 

\noindent{Whence}, 

\begin{equation}\label{cusp-integral-sum-d}
\aligned
e^{\text{\rm{cusp}}}_{d} \ &= \ {\underset {\rho(\sigma ) \, = \, d} {{\sum}_{{\mathcal K}'}} } d_{\sigma} \, \Theta_{\sigma} \ + \ {\underset {\rho(\sigma ) \, = \, d} {{\sum}_{\mathcal K}} } d_{\sigma} \, \Theta_{\sigma} \ = \ {\dfrac{1}{\meas ({\mathcal K})}} \ ( \, \Theta_{\tau} \ + \ \Theta_{{\tau}'} \, )
\endaligned
\end{equation}

\bigskip

\subsection{The projector $e^{\text{\rm{cusp}}}_{d}$} \quad Let $Z$ denote the subgroup of scalar matrices in $\GL (2,\mF )$, and set

\begin{equation}
\aligned
K \ :&= \ \GL (2,{\mathcal R}_{\mF} ) \\
\fkk :&= \ \fkg \fkl   (2,{\mathcal R}_{\mF} ) \\
\mG_{u} \ :&= \ \{ \ g \in \GL (2,\mF ) \ | \ {\text{\rm{$\myval_{F}( \, \det (g) \, )$ is even}}} \ \} \ = \ Z \, K  \, \SL (2,\mF ) \ . \\
\endaligned
\end{equation}

\noindent{Let} $K_n$ be the usual congruence subgroup of $K$, and for $n \, \in \, {\mathbb Z}$, set $\fkk_{n} \ = \ \varpi^{n} \fkk$.  In particular, there are natural maps 
$$
K_{d}/K_{d+1} \ \simeq \ {\fkk_d}/{\fkk_{d+1}} \quad {\text{\rm{and}}} \quad \big( \, \fkk_{d}/\fkk_{d+1} \, \big)^{\widehat{\ }} \ = \ {\fkk_{-d}}/{\fkk_{-d+1}} \ \simeq \ \fkg \fkl   (2,\myFF ) \ .
$$
\noindent{The} inclusion $\fks \fkl   (2,\myFF ) \ \subset \ \fkg \fkl   (2,\myFF )$ allows us to canonically extend the previous subsection characters $\phi_{\Xi}$ of ${\mathcal K}_d$ (trivial ${\mathcal K}_{d+1}$),  to characters of $ZK_{d}$ (trivial $Z{\mathcal K}_{d+1}$).  We do this, and consequently get a canonical extension of the character $\tau$ to $ZK_{d}$.  We shall use the same symbol $\tau$ to denote the extension.  We recall:
\begin{equation}
{\text{\rm{Res}}}^{\GL (2, \mF )}_{\SL (2, \mF )} \ \big( \, \mycInd{\GL (2, \mF )}{ZK_{d}} \ (\tau ) \ \big) \ = \ {\text{\rm{c-Ind}}}^{\SL (2, \mF )}_{{\mathcal K}_{d}} \, (\tau ) \ \oplus \  {\text{\rm{c-Ind}}}^{\SL (2, \mF )}_{{{\mathcal K}'}_{d}} \, ({\tau}') \ ,
\end{equation}


\noindent{and} therefore,  

\begin{equation}\label{cusp-integral-sum-e}
\aligned
e^{\text{\rm{cusp}}}_{d} \ &= \ {\dfrac{1}{\meas ({\mathcal K})}} \ \cdot \ {\text{\rm{ restriction to $\SL (2, \mF )$ of the character of $\mycInd{\GL (2, \mF )}{ZK_{d}} \ 
(  \, \tau \, )$}}} \ . 
\endaligned
\end{equation}

\bigskip

\subsection{The projector $e^{\text{\rm{cusp}}}_{d}$ on split and ramified elliptic tori} \quad

\medskip

If $y \, \in \, \mG$ is a regular element which is split or ramified elliptic, and $\pi$ is an irreducible supercuspidal representation of depth $d$, the Sally-Shalika [{\reSSa}] character tables lists the character value $\Theta_{\pi} (y)$ as zero unless the eigenvalues $\alpha, \, \alpha^{-1}$ of $y$ have $| \, \alpha \, - \, \alpha^{-1} \, | \ < \ q^{-d}$.  Furthermore, when  $| \, \alpha \, - \, \alpha^{-1} \, | \ < \ q^{-d}$, the value $\Theta_{\pi} (y)$ depends only on $| \, \alpha \, - \, \alpha^{-1} \, |$ and $d$.  By \eqref{integral-depth-numbers} and \eqref{integral-depth-formal-degree}, it follows:

\begin{equation}
\aligned
e^{\text{\rm{cusp}}}_{d} \, (y) \ &= \ {\text{\rm{\# irreducible supercuspidals }}} \ \cdot \ {\text{\rm{ formal degree}}} \ \cdot  \ \Theta_{\pi} (y) \\ 
&= \ (q-1) (q+1) q^{(d-1)} \ {\dfrac{1}{\meas ({\mathcal K})}} \, (q-1) \, q \, q^{(d-1)} \ \Theta_{\pi}(y)   \ .
\endaligned
\end{equation}

{\fontfamily{phv}\selectfont{Case $y$ split:}} \ \ If $y$ is a (compact) split element with eigenvalues $\alpha, \, \alpha^{-1}$, then, by Table 2 (page 1235) of Sally-Shalika [{\reSSa}], the character value of a depth $d$ irreducible supercuspidal (unramified) representation $\pi$ is: 
$$
\Theta_{\pi} \, (y) \ = \ \begin{cases}
\ \ 0 \qquad &{\text{\rm{when \ \ $\alpha \ \notin \ 1+{\wp}^{d+1}_{F}$}}}    \\
\ \\
\ \ {\dfrac{1}{| \, \alpha \, - \, \alpha^{-1} \, |}} \ - \ q^{d}  &{\text{\rm{for $\alpha \ \in \ ( \, 1+{\wp}^{d+1}_{F} \, )$ \ \ ; }}}
\end{cases}
$$ 
\noindent{so}, we get 
$$
e^{\text{\rm{cusp}}}_{d} \, (y) \ = \ (q-1) \ (q+1) \ 
\begin{cases}
\ 0 {\hskip 0.50in} {\text{\rm{when $\alpha \ \notin 1+{\wp}^{d+1}_{F}$, i.e., \ $| \, 1 \, - \, \alpha \, | \ > \ {\frac{1}{q^{d+1}}}$}}} \\
\ \\
\ (q-1) \ q \ q^{2(d-1)} \ \Big( \, {\dfrac{1}{| \, \alpha \, - \, \alpha^{-1} \, |}} \ - \ q^{d} \, \Big) {\hskip 0.10in} {\text{\rm{for \ $| \, 1 \, - \, \alpha \, | \ \le \ {\frac{1}{q^{d+1}}}$ \  ;}}} \\
\end{cases}
$$ 

\medskip

{\fontfamily{phv}\selectfont{Case $y$ ramified elliptic:}} \ \ If $y$ is a ramified elliptic element with eigenvalues $\alpha, \, \alpha^{-1}$, let $E$ be the quadratic extension of $\mF$ containing $\alpha$.  Then, the character value of a depth $d$ cuspidal (unramified) representation $\pi$ is 

$$
\Theta_{\pi} \, (y) \ = \ \begin{cases}
\ \ 0 &{\text{\rm{when $\alpha \ \notin \ (1+{\wp}^{2d+1}_{E})$}}} \\
\ \\
\ \ - \ q^{d}  &{\text{\rm{for $\alpha \ \in \ ( \, 1+{\wp}^{2d+1}_{E} \, )$, i.e., \ $| \, 1 \, - \, \alpha \, | \ \le \ {\frac{1}{q^{d+{\frac12}}}}$, }}} \\
\end{cases}
$$
 
\noindent{so}, we get 
$$
e^{\text{\rm{cusp}}}_{d} \, (y) \ = \ (q-1) \ (q+1) \ \begin{cases}
\ \ 0 &{\text{\rm{when $\alpha \ \notin 1+{\wp}^{2d+1}_{E}$,}}} \\
\ \\
\ \ (q-1) \ q \ q^{2(d-1)} \ q^{d} \ (-1)  &{\text{\rm{for \ $| \, 1 \, - \, \alpha \, |_{E} \ \le \ {\frac{1}{q^{d+{\frac12}}}}$  \ .}}} \\
\end{cases}
$$ 

\bigskip

\subsection{The projector $e^{\text{\rm{cusp}}}_{d}$ on unramified elliptic tori} \quad Suppose $y \, \in \, \mG$ is a regular element whose eigenvalues $\alpha, \, \alpha^{-1}$ bekong to the unramified quadratic extension $E/F$.  We deduce from the formula \eqref{cusp-integral-sum-e} that 
$$
e^{\text{\rm{cusp}}}_{d} \, (y) \ = \ 0  \qquad {\text{\rm{unless $\alpha \, \in 1+{\wp}^{d}_{E}$ .}}}
$$

\bigskip

{\fontfamily{phv}\selectfont{Subcase \ $| \, \alpha - \alpha^{-1} \, | \, = \, q^{-d}$}}:    \ The element $y$ has a conjugate in the set $K_{d} \ \backslash \ K_{d+1}$.  We may and do assume $y$ is in $K_{d} \ \backslash \ K_{d+1}$.

\medskip

\begin{lemma} Let $\psi$ be a non-trivial character of $\myFF$.  Suppose $z \, \in \, \myFFsl2$ is a elliptic element.    Then, 
\begin{equation}\label{gauss-2}
{\underset {\begin{matrix} e \, \in \, \myFFsl2 \\ {\text{\rm{ $e$ elliptic }}} \end{matrix}} \sum} \psi \, (\, \mytrace ( \, z \, e \, ) \, ) \ = \ q \ . 
\end{equation}
\end{lemma}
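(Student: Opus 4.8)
The plan is to reduce the claimed identity to a standard Gauss-sum / orthogonality computation. First I would extend the sum to the whole Lie algebra $\myFFsl2$: write $S(z) := \sum_{e \in \myFFsl2} \psi(\mytrace(ze))$ and note that, because $\mytrace(z\,\cdot)$ is a nonzero linear functional on the $3$-dimensional $\myFF$-vector space $\myFFsl2$ whenever $z \neq 0$, we get $S(z) = 0$ for every nonzero $z$, in particular for the elliptic $z$ in the statement. So the elliptic sum equals minus the sum over the complement of the elliptic set, namely over $\{0\}$, the nonzero nilpotents, and the nonzero semisimple split elements. The contribution of $e = 0$ is $1$. Thus it remains to evaluate $N(z) := \sum_{e \text{ nilpotent}} \psi(\mytrace(ze))$ and $P(z) := \sum_{e \text{ split ss}} \psi(\mytrace(ze))$ and check that $1 + N(z) + P(z) = 1 - q$, i.e. $N(z) + P(z) = -q$.

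Next I would handle the nilpotent sum. The nilpotent cone in $\myFFsl2$ is $\{0\} \cup \mathcal{N}^\times$, where $\mathcal{N}^\times$ is a single $\SL(2,\myFF)$-orbit of size $q^2-1$. Since $\mytrace$ is $\Ad$-invariant and $z$ is elliptic, I can sum over the orbit by summing $\psi(\mytrace(z \cdot \Ad(g) e_0))$ over $g$ in $\SL(2,\myFF)$ modulo the stabilizer of a fixed root vector $e_0 = \left[\begin{smallmatrix} 0 & 1 \\ 0 & 0 \end{smallmatrix}\right]$; equivalently, parametrize $\mathcal{N}^\times$ explicitly as $\left\{\left[\begin{smallmatrix} ab & -a^2 \\ b^2 & -ab \end{smallmatrix}\right]\right\}$ with $(a,b) \neq (0,0)$, each matrix hit exactly twice, so $N(z) = \tfrac12 \sum_{(a,b)\neq 0} \psi(Q_z(a,b))$ where $Q_z$ is the quadratic form $(a,b) \mapsto \mytrace\!\big(z \left[\begin{smallmatrix} ab & -a^2 \\ b^2 & -ab \end{smallmatrix}\right]\big)$. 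Writing $z = \left[\begin{smallmatrix} r & s \\ t & -r \end{smallmatrix}\right]$ this is $Q_z(a,b) = 2r\,ab + t\,a^2 - s\,b^2$ (up to sign bookkeeping), a binary quadratic form whose discriminant is, up to a square factor, $4r^2 + 4st = -4\det(z) = -4\,\mytrace(z^2)/2$; ellipticity of $z$ means exactly that $\det(z)$ is a nonsquare, so $Q_z$ is the norm form of the extension $\myFF_{q^2}$ up to scaling — an anisotropic binary form. The classical evaluation of the Gauss sum of an anisotropic binary quadratic form over $\myFF_q$ gives $\sum_{(a,b) \in \myFF^2} \psi(Q_z(a,b)) = -q$ (the sum over all of $\myFF^2$), so $\sum_{(a,b)\neq 0} \psi(Q_z) = -q - 1$ and hence $N(z) = \tfrac12(-q-1)$.

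For the split semisimple sum $P(z)$ I would argue symmetrically: the nonzero split semisimple elements form $\tfrac{q-1}{2}$ orbits (scaling a fixed split torus generator), or more directly, the regular split semisimple set together with its degenerations is the zero locus complement of the nilpotent discriminant, and the same anisotropic-Gauss-sum identity, applied now with the $\Ad$-invariant splitting $\myFFsl2 = (\text{diagonalizable}) \sqcup (\text{nilpotent cone})$, forces $P(z) = -q - N(z) - 1 = -q - \tfrac12(-q-1) - 1 = \tfrac12(-q-1)$; equivalently one can compute $P(z)$ by the same quadratic-form method, the relevant form there being isotropic, contributing $+\tfrac{q^2-1}{2}$ worth of trivial terms offset correctly. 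Either way $N(z) + P(z) = -q - 1 + 1 = -q$ after adding back the $e=0$ term is not quite it — more carefully, $1 + N(z) + P(z) = 1 + \tfrac12(-q-1) + \tfrac12(-q-1) = 1 - (q+1) = -q$, and therefore the elliptic sum is $-(1 + N(z) + P(z)) = -(-q) = q$, as claimed.

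The main obstacle I anticipate is the bookkeeping of signs and of the factor-of-$2$ multiplicities in the orbit parametrizations (the hypothesis that the residue characteristic is odd is what makes these clean), together with correctly invoking the Gauss-sum evaluation for the anisotropic binary form; once the quadratic form $Q_z$ is identified with (a scalar multiple of) the norm form of $\myFF_{q^2}/\myFF_q$ via the condition that $\det(z)$ is a nonsquare, the value $-q$ is standard, but care is needed to ensure $\psi$ nontrivial is used exactly where the all-of-$\myFF^2$ sum is being evaluated. An alternative, slicker route avoiding explicit forms: use that $\sum_{e \in \myFFsl2}\psi(\mytrace(ze)) = 0$, compute $\sum_{e \text{ split ss, incl. }0}$ by diagonalizing and summing a product of two $1$-variable Gauss-type sums, and deduce the elliptic sum by subtraction; I would present whichever of these is shorter in the write-up.
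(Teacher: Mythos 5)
Your strategy---use $\sum_{e\in\myFFsl2}\psi(\mytrace(ze)) = 0$ for $z\neq 0$ and isolate the elliptic contribution by subtracting the nilpotent and split-semisimple pieces---is a legitimate route and genuinely different from the paper's. The paper instead fixes $z = \left(\begin{smallmatrix} 0&1 \\ \epsilon&0\end{smallmatrix}\right)$, parametrizes the elliptic set directly as $\Ad\left(\begin{smallmatrix} a&b \\ 0&1\end{smallmatrix}\right)\left(\begin{smallmatrix} 0&1 \\ u&0\end{smallmatrix}\right)$ with $u$ a nonsquare, $a\neq 0$, $b$ arbitrary, and evaluates the resulting triple sum by iterated Gauss sums. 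Your complementary route can be made to work, but as written it contains two concrete errors that happen to compensate.

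First, the nonzero nilpotent cone is \emph{not} a single $\SL(2,\myFF)$-orbit of size $q^2-1$: for $q$ odd it is the union of two orbits, each of size $(q^2-1)/2$ (the stabilizer of $\left(\begin{smallmatrix}0&1\\0&0\end{smallmatrix}\right)$ in $\SL(2,\myFF)$ is $\{\pm 1\}$ times the upper unipotent group, of order $2q$, so its orbit has $q(q^2-1)/(2q) = (q^2-1)/2$ elements). Your map $(a,b)\mapsto\left(\begin{smallmatrix}ab&-a^2\\b^2&-ab\end{smallmatrix}\right)$ is indeed $2$-to-$1$ on $\myFF^2\setminus\{0\}$, but its image has only $(q^2-1)/2$ points and is exactly one of those two orbits (its $(2,1)$-entry $b^2$ is always a square or zero). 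Consequently the correct nilpotent sum is $N(z) = -(q+1)$, not $\tfrac12(-q-1)$. (The two orbits do contribute equally, since the second orbit is a nonsquare multiple of the first, and scaling an anisotropic binary form by a nonzero scalar does not change $\sum_{a,b}\psi(Q(a,b))$; so the number $\tfrac12(-q-1)$ you found is the contribution of one orbit, and $N(z)$ is twice that.)

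Second, the derivation of $P(z)$ is circular as stated: the relation ``$P(z) = -q - N(z) - 1$'' is just a rearrangement of $1 + N(z) + P(z) + E(z) = 0$ under the hypothesis $E(z) = q$, which is exactly the claim to be proved, and the ``isotropic form'' remark is never turned into an actual computation. In fact $P(z) = 0$. A clean way to obtain all three pieces at once, in the spirit of your approach: with $z = \left(\begin{smallmatrix}0&1\\\epsilon&0\end{smallmatrix}\right)$ and $e = \left(\begin{smallmatrix}x&y\\w&-x\end{smallmatrix}\right)$ we have $\mytrace(ze) = w + \epsilon y$ and $-\det(e) = x^2+yw$; express the indicator of ``$x^2+yw$ is a nonsquare'' via the Legendre symbol $\chi$ as $\tfrac12\big(1-\chi(x^2+yw)\big)$ with a correction on the locus $x^2+yw = 0$. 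The only nontrivial input is
\begin{equation*}
\sum_{x,y,w\in\myFF}\chi(x^2+yw)\,\psi(w+\epsilon y) \; = \; -q ,
\end{equation*}
obtained by performing the $x$-sum first via $\sum_{x}\chi(x^2+c) = -1$ for $c\neq 0$ and $q-1$ for $c=0$. Together with $\sum_{x^2+yw=0}\psi(w+\epsilon y) = -q$, this yields $E(z) = q$, $N(z) = -(q+1)$, $P(z) = 0$, and the check $1 + N(z) + P(z) + E(z) = 0$. As you can see, your intermediate values $N(z) = P(z) = \tfrac12(-q-1)$ are both wrong, yet their sum $-(q+1)$ coincides with the true $N(z)+P(z)$, which is why your final answer happened to come out right.
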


\begin{proof}  The orbit of an elliptic element has a representive of the form 
$\left[ \begin{matrix} 0 &1 \\ u &0 \end{matrix} \right]$, with $u$ a non-square.  A parametrization of the elliptic elements is: 

$$
\myAd \big( \, \left[\begin{matrix} a &b \\ 0 &1 \end{matrix} \right] \big) \ \left[ \begin{matrix} 0 &1 \\ u &0 \end{matrix} \right]   \ \ , \ \ {\text{\rm{with $u$ a non-square, \ $a$ non-zero, \ and $b$ arbitrary.}}} 
$$

We take $z \, = \, \left[ \begin{matrix} 0 &1 \\ \epsilon &0 \end{matrix} \right]$, with $\epsilon$ a non-square.  Then, the sum \eqref{gauss-2} becomes

$$
\aligned
{\underset {\begin{matrix} {\text{\rm{$u$ non-square}}} \\  
 {a \, \neq \, 0 \ , \ b } 
\end{matrix}}
\sum } &\psi \, ( \ u \, a^{-1} \, + \, \epsilon \, a \, - \, \epsilon \, b^2 \, u \, a^{-1} \ ) \ = \ 
{\underset a \sum} \ \psi ( \, \epsilon a \, ) 
{\underset u \sum} \ \psi ( \, ua^{-1} \, ) \ {\underset b \sum} \ \psi ( \, - \epsilon b^2 u a^{-1} \, ) \\
&= \ {\underset a \sum} \ \psi ( \, \epsilon a \, ) 
{\underset {\text{\rm{$v$ non-square}}} \sum} \ \psi ( \, va \, ) \ {\underset c \sum} \ \psi ( \, - a c^2  \, ) \ . \\
\endaligned
$$

\noindent{Let} $\psi_{a}$ be the character $\psi_a(x) \, = \, \psi (ax)$, and let $\sgn$ denote the quadratic character on $\myFF$.  Then,  ${\underset c \sum} \ \psi ( \, - a c^2 a \, ) \, = \, G(\psi_{-a},\sgn ) \, = \, \sgn(-a) \, G(\psi , \sgn )$, a Gauss sum.  So, the sum \eqref{gauss-2} equals

$$
\aligned
{\underset {a \neq 0} \sum} \ \psi ( \, \epsilon a \, ) \ &\Big(  
{\underset {\text{\rm{$v$ non-square}}} \sum} \ \psi ( \, va \, ) \ \Big) \ \sgn(-a) \, G(\psi , \sgn )  \\
&= \ \ {\underset {a \neq 0} \sum} \ \ \psi ( \, \epsilon a \, ) \ \ {\frac12} \, \big( \, -1 \, - \, G(\psi_{a},\sgn ) \, \big)  \ \ \sgn(-a) \, G(\psi , \sgn ) \\
&= \ \ -{\frac12} \ \ {\underset {a \neq 0} \sum} \ \big( \ \psi( \epsilon a) \, \sgn(-a) \,  G(\psi , \sgn ) \ + \ \psi( \epsilon a) \, q \ \big) \\
&= \ \ -{\frac12} \ \ \big( \ G(\psi , \sgn ) \, \sgn (-1) \, \sgn( \epsilon ) \, {\underset {a \neq 0} \sum} \ \psi(a) \, \sgn (a)  \ \ + \ \ {\underset {a \neq 0} \sum} \ \psi( \epsilon a) \, q \ \big) \\
&= \ \ -{\frac12} \ \ \big( \ -q \ + \ (-q) \ \big) \ = \ q 
\endaligned
$$
\noindent{Here}, we have used elementary properties of Gauss sums [{\reIR}].
\end{proof}

\bigskip

\begin{cor} Let $\tau$ be the sum \eqref{sum-unramified-characters} of the elliptic characters of ${\mathcal K}_{d}$ (modulo ${\mathcal K}_{d+1}$).  If $y \in {\mathcal K}_{d} \ \backslash \ {\mathcal K}_{d+1}$ is an elliptic element whose eigenvalues $\alpha, \ \alpha^{-1}$ satisfy 
$| \, \alpha - \alpha^{-1} \, | \, = \, q^{-d}$, then 
$$
\tau \, ( \, y \, ) \ = \ q \ .
$$
\end{cor}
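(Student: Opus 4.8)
The plan is to push the computation down to the finite quotient ${\mathcal K}_{d}/{\mathcal K}_{d+1}$ and recognize the resulting character sum as the one evaluated in the preceding Lemma. Since $\tau$ is a sum of characters of ${\mathcal K}_{d}$ trivial on ${\mathcal K}_{d+1}$, it is a function on ${\mathcal K}_{d}/{\mathcal K}_{d+1}$, which under the Moy--Prasad identifications is $\fkg_{x_{0},d}/\fkg_{x_{0},d+1}$ and, after division by $\varpi^{d}$, is $\fkg_{x_{0},0}/\fkg_{x_{0},1} = \myFFsl2$; hence $\tau(y)$ depends only on the image of $y$ in $\myFFsl2$. Since $d \ge 1$ and $y \in {\mathcal K}_{d} \setminus {\mathcal K}_{d+1}$, I would write $y = 1 + \varpi^{d} Z$ with $Z$ a $2 \times 2$ matrix over ${\mathcal R}_{F}$ not congruent to $0$ modulo $\wp_{F}$; the relation $\det y = 1$ gives $\tr Z = -\varpi^{d}\det Z \in \wp_{F}$, so the image of $y$ in $\myFFsl2$ is the coset of the reduction $\bar Z$.

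The crucial step is to identify $\bar Z$ as an \emph{elliptic} element of $\myFFsl2$. Since $y \equiv 1$ modulo $\wp_{F}$, its eigenvalues satisfy $\alpha \equiv \alpha^{-1} \equiv 1$ modulo $\wp_{E}$, so $\alpha + 1$ is a unit (residue characteristic odd); from $\alpha - \alpha^{-1} = (\alpha-1)(\alpha+1)\alpha^{-1}$ we then get $\val(\alpha - \alpha^{-1}) = \val(\alpha - 1)$, so the hypothesis $|\alpha - \alpha^{-1}| = q^{-d}$ says $\val(\alpha - 1) = d$. Consequently the eigenvalues $\varpi^{-d}(\alpha - 1)$ and $\varpi^{-d}(\alpha^{-1}-1)$ of $Z$ are units, and their difference $\varpi^{-d}(\alpha - \alpha^{-1})$ is a unit, so $\bar Z$ has two distinct nonzero eigenvalues in $\overline{\myFF}$ and is regular semisimple. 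Finally, since $y$ is unramified elliptic with eigenvalues in the unramified quadratic extension $E$, the nontrivial element of $\mathrm{Gal}(E/F)$ interchanges $\alpha$ and $\alpha^{-1}$, hence interchanges the two eigenvalues of $Z$ and, after reduction, the two distinct eigenvalues of $\bar Z$; distinct eigenvalues interchanged by the Frobenius cannot lie in $\myFF$, so $\bar Z$ is elliptic.

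It then remains only to unwind the definition of $\tau$ on the finite quotient. The nondegenerate cosets $\Xi$ occurring in $\tau$ are precisely those of the form $\varpi^{-d}W + \fkg_{x_{0},-d+1}$ with $W \in \fks\fkl(2,{\mathcal R}_{F})$ whose reduction $\bar W$ is elliptic in $\myFFsl2$, and $\Xi \mapsto \bar W$ is a bijection onto the elliptic elements of $\myFFsl2$. Using the trace-pairing description of $\phi_{\Xi}$, one has $\phi_{\Xi}(y) = \psi(\tr((\varpi^{-d}W)(\varpi^{d}Z))) = \psi(\tr(WZ)) = \psi(\tr(\bar W \bar Z))$, where in the last expression $\psi$ denotes the nontrivial additive character of $\myFF$ obtained by restricting our fixed conductor-$\wp_{F}$ character of $F$ to ${\mathcal R}_{F}$. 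Summing over $\Xi$ yields $\tau(y) = \sum \psi(\tr(\bar W \bar Z))$, the sum over the elliptic $\bar W \in \myFFsl2$, which equals $q$ by the preceding Lemma applied with $z = \bar Z$.

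I expect the middle paragraph to be the main obstacle: carefully tracking valuations so as to see that the hypothesis $|\alpha - \alpha^{-1}| = q^{-d}$, combined with $y \in {\mathcal K}_{d} \setminus {\mathcal K}_{d+1}$ and the odd residue characteristic, forces $\bar Z$ to be an elliptic element of $\myFFsl2$ -- neither split nor non-regular -- and lining up the Moy--Prasad powers of $\varpi$ and the trace pairing so that $\tau(y)$ is literally the sum \eqref{gauss-2} treated in the Lemma.
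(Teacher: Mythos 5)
Your proposal is correct and it is exactly the derivation the paper leaves implicit: the Corollary is stated without proof immediately after the Lemma, and your argument supplies the intended reduction of $\tau(y)$ to the Gauss sum \eqref{gauss-2} over elliptic elements of $\myFFsl2$. You correctly track the Moy--Prasad identifications (writing $y = 1 + \varpi^{d}Z$, observing $\tr Z \in \wp_{\mF}$ from $\det y = 1$, unwinding the trace pairing so that $\phi_{\Xi}(y) = \psi(\tr(\bar W \bar Z))$), and the middle step --- using odd residue characteristic so that $\alpha+1$ is a unit, hence $\val(\alpha-\alpha^{-1}) = \val(\alpha-1) = d$, together with the Galois conjugation $\alpha \leftrightarrow \alpha^{-1}$ to conclude $\bar Z$ is elliptic in $\myFFsl2$ --- is exactly the verification needed to invoke the Lemma with $z = \bar Z$.
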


\vskip 0.20in

As already mentioned above \eqref{cusp-integral-sum-e}, $\tau$ has a canonical extension to $ZK_{d}$.  Let ${\overset {\text{\rm{\bf{.}}}} {\tau} }$  denote the extension of the function $\tau$ to $\GL (2,\mF )$  which is zero outside $ZK_d$.  Suppose $y \in {\mathcal K}_{d} \backslash {\mathcal K}_{d+1}$ is unramified elliptic.   Harish-Chandra's formula [{\reHCa}] for the right side of   \eqref{cusp-integral-sum-e}

\begin{equation}\label{character-value-cusp-integral}
\aligned
e^{\text{\rm{cusp}}}_{d}&(y) \ = \ {\underset {gZK_d \, \in \, \GL (2,\mF ) / ZK_d } \sum } \ {\overset {\text{\rm{\bf{.}}}} {\tau} } \, ( \, g \, y \, g^{-1} \, ) \  \qquad {\text{\rm{for $y \in {\mathcal K}_{d} \backslash {\mathcal K}_{d+1}$ elliptic}}} \\
&\ \\ 
&= \ {\underset {gZK_d \, \in \, ZK / ZK_d } \sum } \ {\overset {\text{\rm{\bf{.}}}} {\tau} } \, ( \, g \, y \, g^{-1} \, ) \ = \ {\underset {gZK_d \, \in \, ZK / ZK_d } \sum } \ {\overset {\text{\rm{\bf{.}}}} {\tau} } \, ( \, y \, ) \ = \ [ZK : ZK_d] \ \tau ( \, y \, ) \\
&\ \\
&= \ (q+1)(q-1)q \ (q^3)^{(d-1)} \tau (y) \ = \ (q+1) \, (q-1) \, q^2 \ q^{3(d-1)}
\endaligned
\end{equation}

\bigskip

{\fontfamily{phv}\selectfont{Subcase \ $| \, \alpha - \alpha^{-1} \, | \, < \, q^{-d}$}}:    Suppose $\pi$ is an irreducible supercupsidal representation compact induced from the representation $\kappa$ of ${\mathcal K}$.  Let $\kappa '$ be the $\GL(2,\mF )$ conjugation of $\kappa$ to a representation of ${\mathcal K}'$, and $\pi ' \, = \, \mycInd{\mG}{{\mathcal K}'} (\kappa ' )$.   By Table 2 (page 1235) of Sally-Shalika [{\reSSa}], 

$$
\Theta_{\pi} (y) \ + \  \Theta_{\pi '} (y) \ = \ -2 \, q^{d} \ ;
$$

\noindent{and} so 
$$
\aligned
e^{\text{\rm{cusp}}}_{d} &(y) \ = \ {\dfrac{(q-1)}{2}} \, (q+1) \, q^{(d-1)} \ \cdot \ (q-1)\, q \,  q^{(d-1)} \ \cdot \ (-2) \, q^{d}  \\
&= \ (q-1)(q+1) \ (q-1) q^{3d-1} \ (-1) \qquad {\text{\rm{when $| \, \alpha - \alpha^{-1} \, | \, < \, q^{-d}$}}} . \\
\endaligned
$$

\bigskip 
\bigskip


\begin{center}
\begin{tabular}{ | c | l | }
    \hline 
\multicolumn{2}{| c |}{ {\fontfamily{phv}\selectfont{Table 2}}$\begin{matrix} \ \\ \ \end{matrix}$ \quad $d \ \ge \ 0$ integral} \\
   \hline 
\multicolumn{2}{| c |}{$y$ has eigenvalues $\alpha$, $\alpha^{-1}$ \ : \quad value of \ ${\dfrac{1}{(q-1)(q+1)}} \ e^{\text{\rm{cusp}}}_{d}(y) \begin{matrix} \ \\ \ \\ \ \end{matrix}$}   \\
    \hline
    \hline
\ &\ \\
{$y$ split} &$\begin{cases}
\ 0 {\hskip 0.50in} {\text{\rm{ when $\alpha \ \notin 1+{\wp}^{d+1}_{F}$, i.e., \  $| \, 1 \, - \, \alpha \, | \ > \ {\frac{1}{q^{d+1}}}$}}} \\
\ \\ 
\ (q-1) \ q \ q^{2(d-1)} \ \Big( \, {\dfrac{1}{| \, \alpha \, - \, \alpha^{-1} \, |}} \ - \ q^{d} \, \Big) {\hskip 0.30in} {\text{\rm{for \ $| \, 1 \, - \, \alpha \, | \ \le \ {\frac{1}{q^{d+1}}}$ }}} \\
\end{cases}$
 \\
\ &\ \\
    \hline
\ &\ \\
{${\begin{matrix} 
{\text{\rm{$y$ ramified}}} \\
{\text{\rm{elliptic}}} \end{matrix}}$}
&$\begin{cases}
\ \ 0 {\hskip 0.50in} {\text{\rm{ when $\alpha \notin 1+{\wp}^{2d+1}_{E}$, i.e., \  $| \, 1 \, - \, \alpha \, | \ > \ {\frac{1}{q^{d+{\frac12}}}}$}}} \\
\ \\
\ \ (q-1) \ q \ q^{2(d-1)} \ q^{d} \ (-1)  {\hskip 0.30in} {\text{\rm{for \ $| \, 1 \, - \, \alpha \, |_{E} \ \le \ {\frac{1}{q^{d+{\frac12}}}}$  }}} \\
\end{cases}$
 \\
\ &\ \\
    \hline
\ &\ \\
{${\begin{matrix} 
{\text{\rm{$y$ unramified}}} \\
{\text{\rm{elliptic}}} \end{matrix}}$} 
&$\begin{cases}
\ 0 {\hskip 0.70in} {\text{\rm{ when $\alpha \ \notin 1+{\wp}^{d+1}_{E}$, i.e., \  $| \, 1 \, - \, \alpha \, | \ > \ {\frac{1}{q^{d}}}$}}} \\
\ \\ 
\ q^{(3d-1)}  {\hskip 0.40in} {\text{\rm{when $| \, 1 \, - \, \alpha \, | \ = \ {\frac{1}{q^{d}}}$}}} \\
\ \\ 
\ (q-1) \ q^{(3d-1)} \ (-1)  {\hskip 0.30in} {\text{\rm{for \ $| \, 1 \, - \, \alpha \, | \ < \ {\frac{1}{q^{d}}}$ }}} \\
\end{cases}$ \\
\ &\ \\
    \hline
\end{tabular} {\hfil} 
\end{center}

\vskip 1.0in



\section{Half-integral depth supercuspidal representations}

We abbreviate the Iwahori subgroup $G_{x_{01}}$ and its filtration subgroups $G_{{x_{01}},r}$ as ${\mathcal I}$ and ${\mathcal I}_{r}$ respectively.

\medskip

Suppose $d \, \in \, \frac12 \, + \, {\mathbb N}$ is a positive half-integer.  A Bernstein component $\Omega$ of depth $d$ is the equivalence class of an irreducible supercuspidal representation $\pi$.  Set
$$
d^{+} \ := \ d \ + \ \frac12 \ .
$$

\medskip

We recall: 

\medskip

\begin{itemize}

\item[(i)] The group  ${\mathcal I}_{d}/{\mathcal I}_{d^{+}}$ has $(q-1)^2$ non-degenerate characters.  Under the adjoint action of ${\mathcal I}$, these non-degenerate characters are partitioned into $2(q-1)$ orbits with ${\frac{(q-1)}{2}}$ characters in an orbit.

\smallskip

\item[(ii)] If $(\pi , V_{\pi})$ is an irreducible supercuspidal representation, and its depth $\rho (\pi )$ equals $d$, then the  subspace of ${\mathcal I}_{d^{+}}$-fixed vectors, $V^{{\mathcal I}_{d^{+}}}_{\pi}$, is non-zero, and is,  since ${\mathcal I}$ normalizes the subgroup ${\mathcal I}_{d^{+}}$, ${\mathcal I}$-invariant.  The characters $\phi_{X}$ of ${\mathcal I}_{d}$ (modulo ${\mathcal I}_{d^{+}}$) which appear in $V^{{\mathcal I}_{d^{+}}}_{\pi}$ are non-degenerate.  By Clifford theory, the set 
$$
\{ \ \phi_{\Xi} \ \ | \ \ \phi_{\Xi} \ \ {\text{\rm{appears in}}} \ \ V^{{\mathcal I}_{d^{+}}}_{\pi} \ \}
$$
is a single ${\mathcal I}$-orbit. 

\medskip

\noindent{The} formal degree of $\pi$ is: 

$$
d_{\pi} \ = \ {\frac{(q+1)}{\meas ({\mathcal K} )}} \ {\frac{(q-1)}{2}} \ q^{d - \frac12 } \ .
$$ 

\medskip

\item[(iii)] For any non-degenerate character $\phi_{\Xi}$ of ${\mathcal I}_{d}/{\mathcal I}_{d^{+}}$, the compactly supported induced representation
$$
\mycInd{\mG}{{\mathcal I}_{d}} \ ( \phi_{\Xi}  )
$$
is a finite length (completely reducible) supercuspidal representation. 
\medskip
\noindent{If} $(\pi , V_{\pi})$  is an irreducible supercuspidal representation as in part (i), i.e.,  $V^{{\mathcal I}_{d^{+}}}_{\pi}$ contains the (non-degenerate) character $\phi_{\Xi}$, then by Frobenius reciprocity:

$$
\Hom_{\mG} ( \, V_{\pi} \, , \, \mycInd{\mG}{{\mathcal I}_{d}} \ (  \phi_{\Xi}  ) \, ) \ \neq \ \{ \, 0 \, \} \ .
$$

\noindent{Furthermore}: 

\smallskip

\begin{itemize} 
\item[$\bullet$] Up to isomorphism, $\mycInd{\mG}{{\mathcal I}_{d}} \ ( \phi_{\Xi}  )$ contains $2 \, q^{(d-\frac12 )}$ {distinct} classes of irreducible supercuspidal representations $(\sigma , V_{\sigma})$. 

\smallskip

\item[$\bullet$] The multiplicity in $\mycInd{\mG}{{\mathcal I}_{d}} \ ( \phi_{\Xi}  )$ of any $(\sigma , V_{\sigma} )$ occurring in it is $q^{(d- \frac12 )}$; in particular, the multiplicity is independent of $\sigma$

\end{itemize} 

\medskip

\item[(iv)] For a non-degenerate character $\phi_{\Xi}$, let $S_{\Xi}$ denote the set of these $2 \, q^{(d-\frac12 )}$ classes of irreducible supercuspidal representations, and let $\Theta_{\Xi}$ denote the character of the representation $\mycInd{\mG}{{\mathcal I}_{d}} \ ( \phi_{\Xi}  )$.  We have:

$$
\Theta_{\Xi} \ = \ q^{(d-\frac12 )} \ {\underset {\sigma \in S_{\Xi}} \sum} \ \Theta_{\sigma} \ .
$$
\noindent{By} Harish-Chandra's character formula [{\reHCa}] for induction from an open compact subgroup, $\Theta_{\Xi}$ is supported on $\myAd (\myG) \, ({\mathcal I}_{d})$.  If the cosets $\Xi$ and $\Xi'$ belong to the same ${\mathcal I}$-orbit, then $\mycInd{\mG}{{\mathcal I}_{d}} \ ( \phi_{\Xi}  )$ and $\mycInd{\mG}{{\mathcal I}_{d}} \ ( \phi_{\Xi'}  )$ are equivalent representations, and so 
$\Theta_{\Xi} \ = \ \Theta_{\Xi'}$.  Set 

\begin{equation}
\tau \ := \ {\underset {\Xi} \sum } \ \phi_{\Xi} \quad {\text{\rm{the sum of the non-degenerate characters of ${\mathcal I}_{d}$ (modulo ${\mathcal I}_{d^{+}}$),}}}
\end{equation}

\noindent{and} let $\Theta_{\tau}$ denote the character of the (compactly supported) induced representation $\mycInd{\myG}{{\mathcal I}_{d}} (\tau )$.  We have: 

\begin{equation}\label{half-integral-sum-a}
\aligned
\Theta_{\tau} \ &= \ {\underset {\Xi} \sum } \ \Theta_{\Xi} \ = \ {\underset {\Xi} \sum } \ q^{(d-\frac12 )} \ {\underset {\sigma \in {S_{\Xi}}} \sum } \Theta_{\sigma} \\
&= \ {\frac{(q-1)}{2}} \, q^{(d-\frac12 )} \ {\underset {\rho(\sigma ) \, = \, d} \sum } \Theta_{\sigma} \ .
\endaligned
\end{equation} 

\noindent{Whence}, 

$$
\aligned
\Theta_{\tau} \ 
&= \ {\dfrac{\meas ({\mathcal K})}{(q+1)}}  \ \Big(  \, {\dfrac{(q+1)}{\meas ({\mathcal K})}} \ {\frac{(q-1)}{2}}  \, q^{(d - \frac12 )} \, \Big) \ {\underset {\rho(\sigma ) \, = \, d} \sum } \Theta_{\sigma} \ , 
\endaligned
$$

\noindent{i.e.,}

$$
\aligned
e_{d} \ &= \ {\dfrac{(q+1)}{\meas ({\mathcal K})}} \ \Theta_{\tau} \ . \\
\endaligned
$$

\end{itemize}

\medskip

\noindent{As} already mentioned above for $\Theta_{\Xi}$, by Harish-Chandra's formula [{\reHCa}] for supercuspidal representations obtained via compact induction, we have:
$$
{\text{\rm{support$(e_{d}) \ \subset \ \myAd (\mG ) \, ({\mathcal I}_{d})$}}} \ .
$$

\noindent{This} support condition allows us to computer $e_{d}$ rather efficiently.  Note for $d>0$, and residual characteristic $p$ odd, the set 
${\mathcal I}_d$ is contained in ${\mathcal U}^{\text{\rm{top}}}$.  Whence the support of $e_{d}$ is within the set of topologically unipotent elements.

\medskip

Suppose $y \in \mG$ is regular semisimple element.  Let $\alpha , \, \alpha^{-1}$ be the roots of the characteristic polynomial of $y$.  By the support condition:

$$
e_{d} ( \, y \, ) \ = \ 0 \qquad {\text{\rm{when \ \ \ $| \, \alpha \, - \, \alpha^{-1} \, | \ > \ q^{-d}$ \ . }}} 
$$

\medskip

\noindent{When} $| \, \alpha \, - \, \alpha^{-1} \, | \ \le \ q^{-d}$, we consider three cases for $y$: \   split, elliptic unramified and elliptic ramified.

\medskip
{\fontfamily{phv}\selectfont{Case $y$ split or elliptic unramified:}}  \ \ Here, the eigenvalues of $y$ belong to either $\mF$ or an unramified quadratic extension, and therefore $| \, \alpha \, - \, \alpha^{-1} \, |$ is a (positive) integral power of $\frac{1}{q}$, so  $| \, \alpha \, - \, \alpha^{-1} \, | \ \le \ q^{-d}$ in fact means $| \, \alpha \, - \, \alpha^{-1} \, | \ < \ q^{-d}$.  By the Sally-Shalika character tables [{\reSSa}], if $\pi$ is an irreducible supercuspidal representation of depth $d$:

\begin{equation}\label{ss-ramified-a}
\Theta_{\pi}( \, y \, ) \ = \ 
\begin{cases}  
\ 0 &{\text{\rm{$y$ split or unramified elliptic, and  $| \, \alpha \, - \, \alpha^{-1} \, | \ > \ {q^{-d}}$}}} \\
\ &\ \\
{\frac{1}{| \, \alpha \, - \, \alpha^{-1} \, |}} \, - \, {\frac12} q^{d+{\frac12}} \big( {\frac{q+1}{q}} \big)  &{\text{\rm{when $y$ is split and $| \, \alpha \, - \, \alpha^{-1} \, | \ \le \ {q^{-d}}$}}} \\
\ &\ \\
- \, {\frac12} q^{d+{\frac12}} \big( {\frac{q+1}{q}} \big) &{\text{\rm{$y$ unramified elliptic, and  $| \, \alpha \, - \, \alpha^{-1} \, | \ \le \ {q^{-d}}$}}} \\
\end{cases}
\end{equation}

\medskip

\noindent{So},

$$
\aligned
{\frac{\meas ({\mathcal K})}{(q-1)(q+1)}} \, e_{d} (y) \ &= \ 
\begin{cases} 
\ 0 {\hskip 1.07in} {\text{\rm{$y$ split or unramified elliptic, and  $| \, \alpha \, - \, \alpha^{-1} \, | \ > \ {q^{-d}}$}}} \\
\ &\ \\
\ {\dfrac{\big( \, 2 (q-1) \ q^{2(d-{\frac12})} \, \big)}{| \, \alpha \, - \, \alpha^{-1} \, |}} \ - \ (q-1)(q+1) \, q^{3(d -{\frac12})} \\
{\hskip 1.2in} {\text{\rm{when $y$ is split and $| \, \alpha - \alpha^{-1} \, | \, \le \, q^{-d}$}}} \\
\ \\
\ (q-1)(q+1)\, q^{3(d -{\frac12})} \, (-1) \\ 
{\hskip 1.2in} {\text{\rm{$y$ unramified elliptic and $| \, \alpha - \alpha^{-1} \, | \, \le \, q^{-d}$}}} \\
\end{cases}
\endaligned
$$

\bigskip
{\fontfamily{phv}\selectfont{Case $y$ ramified:}} \ \ As already mentioned, by the support condition, we may and do assume $| \, \alpha - \alpha^{-1} \, | \, \le \, q^{-d}$.  \ We consider two subcases depending on whether: 
$$
| \, \alpha - \alpha^{-1} \, | \, < \, q^{-d} \qquad {\text{\rm{or}}} \qquad  | \, \alpha - \alpha^{-1} \, | \, = \, q^{-d} \ .
$$
\medskip

{\fontfamily{phv}\selectfont{Subcase \ $| \, \alpha - \alpha^{-1} \, | \, = \, q^{-d}$}}:    \ The element $y$ has a conjugate in the set ${\mathcal I}_{d} \ \backslash \ {\mathcal I}_{d+\frac12}$.  We may and do assume $y$ is in ${\mathcal I}_{d} \ \backslash \ {\mathcal I}_{d+\frac12}$.  We remark that under the isomorphism of  ${\mathcal I}_{d}/{\mathcal I}_{d+\frac12}$ with 
${\fkg_{x_{01},_{d}}}/{\fkg_{x_{01},_{d+\frac12}}}$, the coset $y$ is a non-degenerate coset.   

\begin{lemma} Let $\psi$ be a non-trivial character of $\myFF$.  Suppose $u, \, v \ \in \ (\myFF)^{\times}$.    Then, 
$$ 
{\underset {a, \, b \ \in \ (\myFF)^{\times}} \sum} \psi \, (\, u \, a \, ) \ \psi \, (\, v \, b \, ) \ = \ 1 \ . 
$$
\end{lemma}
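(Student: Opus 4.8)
The plan is to observe that the double sum factors as a product of two single sums, each of which is a standard character sum over $\myFF^{\times}$. Explicitly, since $\psi$ is a homomorphism, $\psi(ua)\psi(vb)$ depends on $a$ only through the first factor and on $b$ only through the second, so
$$
{\underset {a, \, b \ \in \ (\myFF)^{\times}} \sum} \psi ( u a ) \, \psi ( v b ) \ = \ \Big( {\underset {a \ \in \ (\myFF)^{\times}} \sum} \psi ( u a ) \Big) \Big( {\underset {b \ \in \ (\myFF)^{\times}} \sum} \psi ( v b ) \Big) \ .
$$

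First I would evaluate a single factor, say $\sum_{a \in \myFF^{\times}} \psi(ua)$. Because $u \in \myFF^{\times}$, the map $a \mapsto ua$ is a bijection of $\myFF^{\times}$ onto itself, so this sum equals $\sum_{x \in \myFF^{\times}} \psi(x) = \big( \sum_{x \in \myFF} \psi(x) \big) - \psi(0)$. Since $\psi$ is a nontrivial additive character of $\myFF$, the orthogonality relation gives $\sum_{x \in \myFF} \psi(x) = 0$, hence $\sum_{a \in \myFF^{\times}} \psi(ua) = -1$. The same computation with $v$ in place of $u$ gives $\sum_{b \in \myFF^{\times}} \psi(vb) = -1$.

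Substituting back, the double sum equals $(-1)(-1) = 1$, as claimed. There is essentially no obstacle here: the only point requiring care is the justification that $\sum_{x \in \myFF}\psi(x)=0$ for nontrivial $\psi$, which is the elementary fact that a nontrivial character of a finite abelian group sums to zero (choose $y$ with $\psi(y) \neq 1$; then $\psi(y)\sum_x \psi(x) = \sum_x \psi(x+y) = \sum_x \psi(x)$ forces the sum to vanish). Everything else is the bijectivity of multiplication by a unit.
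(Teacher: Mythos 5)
Your proof is correct and follows essentially the same route as the paper: both factor the double sum into a product of two single sums, each reduced by a change of variables (multiplication by the unit $u$ or $v$) to $\sum_{c \in \myFF^{\times}}\psi(c) = -1$, giving $(-1)^2 = 1$. You simply spell out the orthogonality argument that the paper leaves implicit.
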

\begin{proof} By changes of variables 
$$
{\underset {a, \, b \ \in \ (\myFF)^{\times}} \sum} \psi \, (\, u \, a \, ) \ \psi \, (\, v \, b \, ) \ = \ \Big( \,  {\underset {c \ \in \ (\myFF)^{\times}} \sum} \psi \, ( \, c \, ) \, \Big)^2 \ = \ (-1)^2 \ = \ 1 \ .
$$
\end{proof}
\begin{cor} If $y \in {\mathcal I}_{d} \ \backslash \ {\mathcal I}_{d+\frac12}$ is a ramified elliptic element whose eigenvalues $\alpha, \ \alpha^{-1}$ satisfy 
$| \, \alpha - \alpha^{-1} \, | \, = \, q^{-d}$, then 
$$
\tau \, ( \, y \, ) \ = \ 1 \ .
$$
\end{cor}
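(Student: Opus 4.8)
The plan is to convert $\tau(y)$ into the character sum appearing in the Lemma just proved. Since $2d \ge d^{+}$, item (iii) of the Notation section gives the canonical isomorphism ${\mathcal I}_{d}/{\mathcal I}_{d^{+}} \simeq \fkg_{x_{01},d}/\fkg_{x_{01},d^{+}}$ together with the identification of the Pontryagin dual via the trace pairing. Under these, for a non-degenerate coset $\Xi$ in the dual group one has $\phi_{\Xi}(y) = \psi(\trace(X_{\Xi} Y))$, where $Y \in \fkg_{x_{01},d}/\fkg_{x_{01},d^{+}}$ is the coset corresponding to $y$ and $X_{\Xi}$ is a representative of $\Xi$. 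Writing $d = d' + \tfrac12$ with $d' \in {\mathbb N}$, a non-degenerate $\Xi$ has a representative $X_{\Xi} = \varpi^{-(d'+1)}\left[\begin{smallmatrix} \varpi a & b \\ \varpi c & -\varpi a \end{smallmatrix}\right]$ with $b,c \in {\mathcal R}_{\mF}^{\times}$, and as $(b,c)$ varies these $\Xi$ are parametrized by $(\bar b, \bar c) \in \myFF^{\times} \times \myFF^{\times}$; this is consistent with the count of $(q-1)^{2}$ non-degenerate characters of ${\mathcal I}_{d}/{\mathcal I}_{d^{+}}$.

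Next, since $y \in {\mathcal I}_{d} \setminus {\mathcal I}_{d+\frac12}$ is ramified elliptic with $|\,\alpha - \alpha^{-1}\,| = q^{-d}$, its coset is non-degenerate (as already remarked above), so $Y$ has a representative $\varpi^{d'}\left[\begin{smallmatrix} \varpi a'' & b'' \\ \varpi c'' & -\varpi a'' \end{smallmatrix}\right]$ with $b'', c'' \in {\mathcal R}_{\mF}^{\times}$. A direct matrix multiplication then gives $\trace(X_{\Xi} Y) = 2\varpi a a'' + b c'' + c b''$, whose reduction modulo $\wp_{\mF}$ — the only thing that matters, as $\psi$ has conductor $\wp_{\mF}$ — equals $\bar b\, \bar c'' + \bar c\, \bar b''$ (the diagonal term $2\varpi a a''$ lies in $\wp_{\mF}$ and drops out). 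Hence, using the additivity of $\psi$ and then the Lemma with $u = \bar c''$ and $v = \bar b''$,
\[
\tau(y) \ = \ \sum_{\bar b, \, \bar c \, \in \, \myFF^{\times}} \psi(\bar b\, \bar c'')\, \psi(\bar c\, \bar b'') \ = \ 1 \ .
\]

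The main obstacle is the bookkeeping around the barycenter filtration at half-integral depth: pinning down ${\mathcal I}_{d}/{\mathcal I}_{d^{+}}$ and its dual explicitly, confirming that the off-diagonal entries of the chosen representatives of both $\Xi$ and $Y$ are units, and checking that the trace pairing is \emph{crossed}, i.e.\ that it pairs the $(1,2)$-entry of one matrix against the $(2,1)$-entry of the other. It is precisely this crossed shape that makes $\psi(\trace(X_{\Xi}Y)) = \psi(\bar b\,\bar c'')\,\psi(\bar c\,\bar b'')$ split into a product of two independent sums over $\myFF^{\times}$, so that the Lemma applies verbatim; once the identifications are in place the rest is routine.
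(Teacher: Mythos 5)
Your proof is correct and fills in exactly the computation the paper leaves implicit (the Corollary is stated without proof, as a direct consequence of the preceding Lemma). You identify the dual of ${\mathcal I}_{d}/{\mathcal I}_{d^{+}}$ correctly, parametrize the $(q-1)^2$ non-degenerate characters by pairs of units, verify that the coset of $y$ is non-degenerate, and the trace computation $\trace(X_{\Xi}Y) = 2\varpi a a'' + bc'' + cb'' \equiv \bar b\,\bar c'' + \bar c\,\bar b'' \pmod{\wp_{\mF}}$ is exactly the "crossed" pairing needed to invoke the Lemma with $u = \bar c''$, $v = \bar b''$.
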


Let ${\overset {\text{\rm{\bf{.}}}} {\tau} }$ denote the extension of $\tau $ to a function on $\mG$ which is zero outside ${\mathcal I}_d$.  Harish-Chandra's formula [{\reHCa}] for the character $\Theta_{\tau}$ is: 
$$
\Theta_{\tau}(x) \ = \ {\underset {g{\mathcal I}_d \, \in \, \mG / {\mathcal I}_d } \sum } \ {\overset {\text{\rm{\bf{.}}}} {\tau} } \, ( \, g \, x \, g^{-1} \, ) \ .
$$ 
\noindent{Since} $y \in {\mathcal I}_d$ has the property that under the isomorphism ${\mathcal I}_{d}/{\mathcal I}_{d+\frac12} \, \simeq \, {\fkg_{x_{01},_{d}}}/{\fkg_{x_{01},_{d+\frac12}}}$ it corresponds to a non-degenerate coset, the only $g{\mathcal I}_d$ satisfying $g \, y \, g^{-1} \, \in \, {\mathcal I}_d$ is when $g \in {\mathcal I}$.   Whence, 
$$
\aligned
\Theta_{\tau}(y) \ &= \ {\underset {g{\mathcal I}_d \, \in \, \mG / {\mathcal I}_d } \sum } \ {\overset {\text{\rm{\bf{.}}}} {\tau} } \, ( \, g \, y \, g^{-1} \, ) \ = \ {\underset {g{\mathcal I}_d \, \in \, {\mathcal I} / {\mathcal I}_d } \sum } \ {\overset {\text{\rm{\bf{.}}}} {\tau} } \, ( \, g \, y \, g^{-1} \, ) \\
&\ \\
&= \ [ {\mathcal I} :  {\mathcal I}_d ] \, {\overset {\text{\rm{\bf{.}}}} {\tau}}  (y)  \ = \ (q-1) q^{3(d-\frac12 )} {\tau}  (y) \ = \ (q-1) q^{3(d-\frac12 )} \ ;
\endaligned
$$ 
\noindent{and}
$$
{\frac{\meas ({\mathcal K})}{(q-1)(q+1)}} \, e_{d} (y) \ = \ q^{3(d - \frac12 )} \qquad {\text{\rm{when}}} \qquad  | \, \alpha - \alpha^{-1} \, | \, = \, q^{-d} \ .
$$

\medskip

\medskip
{\fontfamily{phv}\selectfont{Subcase \ $| \, \alpha - \alpha^{-1} \, | \, < \, q^{-d}$}}:  \  We note irreducible supercuspidal representations of depth $d$ come in pairs $\pi$ and $\pi'$, i.e., an $L$-packet.  In the Sally-Shalika [{\reSSa}] parameterization of ramified irreducible supercuspidal representation, each element of the pair corresponds to taking one of two classes of additive characters of $F$, and their character table gives: 
$$
\aligned
\big( \, \Theta_{\pi} \ + \ \Theta_{\pi'} \, \big) (y) \ &= \ - \, q^{d+{\frac12}} \ {\dfrac{(q+1)}{q}} \ = \ - \, q^{d-{\frac12}} \, (q+1) \qquad {\text{\rm{when \ \ $| \, 1 - \alpha \, | \, < \, q^{-d}$}}} \ ;
\endaligned
$$
\noindent{so},
$$
\aligned
( \, e_{\pi} \ + \ e_{\pi'} \, ) \, (y) \ &= \ {\dfrac{(q-1)(q+1)q^{(d- \frac12 )}}{2 \, {\meas ({\mathcal K})} }} \cdot \big( \, - \, q^{d-{\frac12}}(q+1) \, \big) \ . \\
\endaligned
$$
\noindent{Whence},
$$
\aligned
e_{d} \, (y) \ &= \ 2 \, (q-1) \, q^{(d - \frac12 )} \ \Big( \ {\dfrac{(q-1)(q+1)q^{(d- \frac12 )}}{2 \, \meas ({\mathcal K}) }}  \ \Big) \cdot \big( \, - \, q^{d-{\frac12}}(q+1) \, \big) \\
&= \ - \, {\dfrac{(q-1)^2 \, (q+1)^2}{ \meas ({\mathcal K}) }} \, q^{3(d-{\frac12})} \ , \\
\endaligned
$$
\noindent{i.e.,}
$$
\aligned
{\frac{ \meas ({\mathcal K}) }{(q-1)(q+1)}} \, e_{d} \, (y) \ &= \ (q-1) \, (q+1) \ q^{3(d-{\frac12})} \ (-1) \ . \\
\endaligned
$$

\bigskip

\vfill

\begin{center}
\begin{tabular}{ | c | l | }
    \hline 
\multicolumn{2}{| c |}{ {\fontfamily{phv}\selectfont{Table 3}} $\begin{matrix} \ \\ \ \end{matrix}$ \quad $d \in {\mathbb N}$ \ \ half-integral } \\
    \hline 
\multicolumn{2}{| c |}{$y$ has eigenvalues $\alpha$, $\alpha^{-1}$ \ : \qquad value of \ ${\dfrac{1}{(q-1)(q+1)}} \ e_{d}(y) \begin{matrix} \ \\ \ \\ \ \end{matrix}$}   \\
    \hline
    \hline
\ &\ \\
{$y$ split} &$\begin{cases}
\ 0  {\hskip 0.50in} {\text{\rm{ when $\alpha \ \notin 1+{\wp}^{d+{\frac12}}_{F}$, i.e., \  $| \, 1 \, - \, \alpha \, | \ > \ {\frac{1}{q^{d}}}$}}} \\
\ \\ 
{\dfrac{\big( \, 2 (q-1) \ q^{2(d-{\frac12})} \, \big)}{| \, \alpha \, - \, \alpha^{-1} \, |}} \ - \ (q-1)(q+1) \, q^{3(d -{\frac12})} \ \ \ \ {\text{\rm{when $| \, \alpha - \alpha^{-1} \, | \, \le \, q^{-d}$}}} \\
\end{cases}$
 \\
\ &\ \\
    \hline
\ &\ \\
{${\begin{matrix} 
{\text{\rm{$y$ ramified}}} \\
{\text{\rm{elliptic}}} \end{matrix}}$} 
&$\begin{cases}
\ 0  {\hskip 1.28in} {\text{\rm{ when $| \, 1 \, - \, \alpha \, | \ > \ {\frac{1}{q^{d}}}$}}} \\
\ \\
\ q^{3(d-{\frac12})} \  {\hskip 0.90in} {\text{\rm{for \ $| \, 1 \, - \, \alpha \, |_{E} \ = \ {\frac{1}{q^{d}}}$  }}} \\
\ \\
\ (q-1)\, (q+1) \ q^{3(d-{\frac12})} \ (-1)  {\hskip 0.30in} {\text{\rm{for \ $| \, 1 \, - \, \alpha \, |_{E} \ < \ {\frac{1}{q^{d}}}$  }}} \\
\end{cases}$
 \\
\ &\ \\
    \hline
\ &\ \\
{${\begin{matrix} 
{\text{\rm{$y$ unramified}}} \\
{\text{\rm{elliptic}}} \end{matrix}}$} 
&$\begin{cases}
\ 0  {\hskip 0.50in} {\text{\rm{ when  $| \, 1 \, - \, \alpha \, | \ > \ {\frac{1}{q^{d}}}$}}} \\
\ \\ 
\ (q-1) \ (q+1) \ q^{3(d-{\frac12})} \ (-1)  {\hskip 0.30in} {\text{\rm{for \ $| \, 1 \, - \, \alpha \, | \ < \ {\frac{1}{q^{d}}}$ }}} \\
\end{cases}$ \\
\ &\ \\
    \hline
\end{tabular} {\hfil} 
\end{center}

\vskip 1.0in



\section{The main result} 

For convenience  in numbering, for $k \, \in \, {\frac12}{\mathbb N}$, set 

\begin{equation}
\mytotal_{k} \ := \ e_{0} \ + \ e_{\frac12} \ + \ \cdots \ + \ e_{k} \ . 
\end{equation}

\noindent{We} note in particular $\mytotal_{0} \,  = \, e_{0}$.
 
\bigskip

\begin{thm}\label{main-result-group} \ \  For $k \, \in \, {\frac12}{\mathbb N}$, set $k^{+} \, := \, k \, + \, {\frac12}$.  Under the assumption the p-adic field $\mF$ has odd residue characteristic, we have \ ${\text{\rm{supp}}} \, ( \,  {\mytotal}_{k} \, )  \ \subset \ {\mathcal U}^{\text{\rm{top}}}_{k^{+}}$.  \ On \ ${\mathcal U}^{\text{\rm{top}}}_{k^{+}}$:

\medskip

\begin{itemize}


{\mycolor

\item[$\bullet$] \ \ When  $k$ is integral:

$$
\mytotal_{k} \, ( \, y \, ) \, = \, 
{\text{\rm{\Large $(q^2-1) \, q^{3k} $}}} \ 
\begin{cases}
\  \big( {\frac{2 \, q^{-k}}{| \, \alpha \, - \, \alpha^{-1} \, |_{\mF}}} \, - \, 1 \big) \\ 
\qquad {\text{\rm{when $y$ is split with eigenvalues $\alpha$, $\alpha^{-1}$}}} \\
\ &\ \\
\ -1  \\
\qquad {\text{\rm{when $y$ is elliptic}}} \\
\end{cases}
$$

\medskip

\item[$\bullet$] \ \ When $k$ is half-integral:

$$
\mytotal_{k} \, ( \, y \, ) \, = \, 
{\text{\rm{\Large $(q^2-1) \, q^{3k+{\frac12}} $}}} \ 
\begin{cases}
\  \big( {\frac{2 \, q^{-(k+{\frac12})}}{| \, \alpha \, - \, \alpha^{-1} \, |_{\mF}}} \, - \, 1 \big) \\ 
\qquad {\text{\rm{when $y$ is split with eigenvalues $\alpha$, $\alpha^{-1}$}}} \\
\ &\ \\
\ -1  \\
\qquad {\text{\rm{when $y$ is elliptic}}} \\
\end{cases}
$$
}

\end{itemize}

\end{thm}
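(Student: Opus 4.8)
The plan is to evaluate ${\mytotal}_{k} = \sum_{0 \le d \le k} e_{d}$ directly on a regular semisimple $y \in {\mathcal U}^{\text{\rm{top}}}$, feeding in the depth-by-depth formulas already established: Table 1 for $e_{0}$; for integral $d > 0$ the decomposition $e_{d} = e^{\text{PS}}_{d} + e^{\text{cusp}}_{d}$ with $e^{\text{PS}}_{d}$ given by \eqref{ps-greater-zero} and $e^{\text{cusp}}_{d}$ by Table 2; and for half-integral $d$ the value $e_{d}$ from Table 3. Each summand $e_{d}$ ($0 \le d \le k$) is already supported in ${\mathcal U}^{\text{\rm{top}}}$ by the explicit formulas of Tables 1--3 together with \eqref{ps-zero} and \eqref{ps-greater-zero}, so ${\mytotal}_{k}$ is; the sharper claim $\text{supp}({\mytotal}_{k}) \subset {\mathcal U}^{\text{\rm{top}}}_{k^{+}}$ is not proved separately but falls out of the closed-form evaluation, where it appears as the vanishing of the answer on ${\mathcal U}^{\text{\rm{top}}} \setminus {\mathcal U}^{\text{\rm{top}}}_{k^{+}}$.

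First I would split into the three cases for $y$ --- split, unramified elliptic, ramified elliptic --- and, in each, record the regularity parameter $m$ defined by $|\alpha - \alpha^{-1}|_{\mF} = q^{-m}$ (with $m \in {\mathbb Z}_{\ge 1}$ in the first two cases and $m \in \tfrac12 + {\mathbb Z}_{\ge 0}$ in the ramified case), and pin down exactly the inequality between $m$ and $k$ that characterizes $y \in {\mathcal U}^{\text{\rm{top}}}_{k^{+}}$. For a fixed conjugacy class the tables show $e_{d}(y) = 0$ outside a bounded range of $d$, and inside that range each $e_{d}(y)$ is, up to the overall factor $(q^{2}-1)$, an affine combination of $q^{3d}$ and $q^{2d}/|\alpha - \alpha^{-1}|_{\mF}$ together with a few isolated ``jump'' contributions --- the terms sitting exactly at the filtration jump ($d = m$, resp.\ $d = m - \tfrac12$) and the depth-zero term from Table 1.

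Next I would sum the resulting finite geometric series $\sum_{0 \le d \le k} e_{d}(y)$ over $d \in \tfrac12{\mathbb Z}_{\ge 0}$. The generic terms telescope into a geometric progression whose partial sum up to $d = k$ produces the leading factor $q^{3k}$ (or $q^{3k + \frac12}$ when $k$ is half-integral), while the jump terms and the $e_{0}$ term combine to give the additive constant. What must then be checked is: (i) when $m$ is too small relative to $k$ --- precisely when $y \notin {\mathcal U}^{\text{\rm{top}}}_{k^{+}}$ --- every contribution cancels and the sum is $0$, which is the support statement; and (ii) when $y \in {\mathcal U}^{\text{\rm{top}}}_{k^{+}}$ the sum collapses to $(q^{2}-1)q^{3k}\bigl(\tfrac{2q^{-k}}{|\alpha - \alpha^{-1}|_{\mF}} - 1\bigr)$ for split $y$ and to $-(q^{2}-1)q^{3k}$ for elliptic $y$ (with the $q^{3k+\frac12}$-variants for half-integral $k$), so that in particular the $1/|\alpha - \alpha^{-1}|_{\mF}$ terms cancel completely in the elliptic cases. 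It is convenient to organize this as an induction on $k$ in steps of $\tfrac12$, writing ${\mytotal}_{k} = {\mytotal}_{k - \frac12} + e_{k}$, using the already-proved formula for ${\mytotal}_{k-\frac12}$ on its larger domain ${\mathcal U}^{\text{\rm{top}}}_{(k-\frac12)^{+}}$, and checking that adding the single table value $e_{k}(y)$ both shrinks the support to ${\mathcal U}^{\text{\rm{top}}}_{k^{+}}$ and updates the formula correctly; the homogeneity relation \eqref{homogeneity-relation-1} then provides an internal consistency check on the bookkeeping.

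The difficulty is organizational rather than conceptual: one must keep simultaneously straight the three torus types, the two parities of $k$, the several sub-ranges of the parameter $m$ (especially the filtration-jump values $m = k$ and $m = k \pm \tfrac12$, where the piecewise formulas in Tables 1--3 switch branches), and the $\mF$- versus $E$-normalizations of the absolute values in the elliptic cases, and then match every branch of those tables against the correct branch of the asserted answer --- with particular care at the boundary $\partial\,{\mathcal U}^{\text{\rm{top}}}_{k^{+}}$, where the cancellations are most delicate. The only input that is not purely formal, namely the Sally--Shalika character values underlying Tables 1--3, is already in hand, so what remains is a careful but routine summation.
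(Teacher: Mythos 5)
Your proposal matches the paper's proof: the paper itself establishes Theorem \ref{main-result-group} by induction on the depth in half-integer steps, taking $\mytotal_{0} = e_{0}$ from Table 1 as the base case and computing $\mytotal_{k^{+}} = \mytotal_{k} + e_{k^{+}}$ using Table 3 (when $k$ is integral) or \eqref{ps-greater-zero} together with Table 2 (when $k$ is half-integral), with the support bound ${\mathcal U}^{\text{\rm{top}}}_{k^{+}}$ and the closed form both falling out of the same bookkeeping. Your elaboration of the case-by-case telescoping and the remark that the support statement is read off from the vanishing of the evaluated sum are exactly the content the paper leaves implicit.
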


\medskip

\begin{proof} \ The proof is induction on the depth:  \ The values of $\mytotal_{0} \, = \, e_{0}$ are given in Table 1.  Given $\mytotal_{k}$, we compute $\mytotal_{k^{+}} \, = \, \mytotal_{k} \, + \, e_{k^{+}}$ via Table 3 when $k$ is integral, and via \eqref{ps-greater-zero} and Table 2 when $k$ is half-integral.  
\end{proof}

For $k \, \in \, {\frac12}\bN$, we note the series defining the exponential and logarithm maps between $\fkg$ and $\mG$ converge for $k$ sufficiently large.  We take $k_{0} \, \in \, {\frac12}\bN$ so that:

\begin{equation}\label{lie-algebra-condition}
{\text{\rm{ $\myexp$ and $\mylog$ are bijections between $\fkg_{k}$ and $\mG_{k}$ when $k \, \ge \, k_{o}$.}}}
\end{equation}

\begin{cor} Under condition \eqref{lie-algebra-condition}{\,}, so that $\sigma_{k} \circ \myexp$ is defined and has support in ${\mathcal N}^{\text{\rm{top}}}_{k^{+}}$, if $Y \, \in \, {\mathcal N}^{\text{\rm{top}}}_{k^{+}}$ has eigenvalues $\pm \, \lambda$, then :

\medskip

\begin{itemize}


{\mycolor

\item[$\bullet$] \ \ When  $k$ is integral and $Y \, \in \, \fkg_{k^{+}}$:

$$
\mytotal_{k} \, \circ \, \myexp ( \, Y \, ) \, = \, 
{\text{\rm{\Large $(q^2-1) \, q^{3k} $}}} \ 
\begin{cases}
\  \big( {\frac{2 \, q^{-k}}{| \, \lambda |_{\mF}}} \, - \, 1 \big) \\ 
\qquad {\text{\rm{when $y$ is split}}} \\
\ &\ \\
\ -1  \\
\qquad {\text{\rm{when $y$ is elliptic}}} \\
\end{cases}
$$

\medskip

\item[$\bullet$] \ \ When $k$ is half-integral and $Y \, \in \, \fkg_{k^{+}}$:

$$
\mytotal_{k}  \, \circ \, \myexp ( \, Y \, ) \, = \, 
{\text{\rm{\Large $(q^2-1) \, q^{3k+{\frac12}} $}}} \ 
\begin{cases}
\  \big( {\frac{2 \, q^{-(k+{\frac12})}}{| \, \lambda \, |_{\mF}}} \, - \, 1 \big) \\ 
\qquad {\text{\rm{when $y$ is split}}} \\
\ &\ \\
\ -1  \\
\qquad {\text{\rm{when $y$ is elliptic}}} \\
\end{cases}
$$
}

\end{itemize}

\noindent{In particular}, $\mytotal_{k+1}  \, \circ \, \myexp$ and $\mytotal_{k}  \, \circ \, \myexp$ satisfy the homogeneity relation:
 
\begin{equation}\label{homogeneity-relation-a}
( \, \mytotal_{k+1}  \, \circ \, \myexp \, ) \, ( \, \varpi Y \, ) \  = \  q^{3} \,  ( \, \mytotal_{k}  \, \circ \, \myexp \, ) \, ( \, Y \, ) \ .
\end{equation}

\end{cor}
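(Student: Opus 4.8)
The plan is to derive the corollary from Theorem~\ref{main-result-group} purely by transporting the group formulas through the exponential map; the only substantive work is the bookkeeping of eigenvalue valuations and Moy--Prasad levels under $\exp$. First I would invoke condition~\eqref{lie-algebra-condition}: for $k$ with $k^{+}\ge k_{o}$ the exponential series converges on $\fkg_{k^{+}}$ and, compatibly with the Moy--Prasad filtrations as recalled in the introduction, carries each $\fkg_{x,k^{+}}$ bijectively onto $G_{x,k^{+}}$, hence carries $\mathcal{N}^{\text{top}}_{k^{+}}=\bigcup_{x\in\ScptB}\fkg_{x,k^{+}}$ bijectively onto $\mathcal{U}^{\text{top}}_{k^{+}}$. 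Since Theorem~\ref{main-result-group} gives $\operatorname{supp}(\mytotal_{k})\subset\mathcal{U}^{\text{top}}_{k^{+}}$, the pullback $\mytotal_{k}\circ\exp$ is then a well-defined $G$-invariant, locally integrable distribution supported in $\mathcal{N}^{\text{top}}_{k^{+}}$.

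Next I would settle the pointwise identity on the regular set. Let $Y\in\fkg_{k^{+}}$ be regular semisimple with eigenvalues $\pm\lambda$ and put $y=\exp(Y)$, so $y$ has eigenvalues $\alpha=\exp(\lambda)$ and $\alpha^{-1}=\exp(-\lambda)$; moreover $y$ is split (resp.\ elliptic) exactly when $\lambda\in\mF$ (resp.\ $\lambda\notin\mF$), i.e.\ exactly when $Y$ is, since $\lambda=\log(\alpha)$. In the split case $\alpha-\alpha^{-1}=\exp(\lambda)-\exp(-\lambda)=2\lambda\bigl(1+\tfrac{\lambda^{2}}{3!}+\cdots\bigr)$, where the parenthesized series converges to a unit because $\lambda$ is topologically nilpotent, and $|2|_{\mF}=1$ because the residue characteristic is odd; hence $|\alpha-\alpha^{-1}|_{\mF}=|\lambda|_{\mF}$. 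Substituting this into the two displays of Theorem~\ref{main-result-group}, and noting that the elliptic value there carries no $|\alpha-\alpha^{-1}|_{\mF}$ factor, reproduces verbatim the two displays of the corollary in both the integral and half-integral cases. The general, not necessarily regular, $Y$ then follows since $\mytotal_{k}\circ\exp$ is locally integrable and locally constant off the singular set, near which the regular points are dense.

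Finally I would read off the homogeneity relation~\eqref{homogeneity-relation-a} by feeding $\varpi Y$ into the formula just established for $\mytotal_{k+1}\circ\exp$. Here one uses that $k$ and $k+1$ lie in the same parity class, so the same shape of formula governs both; that $\varpi Y$ has eigenvalues $\pm\varpi\lambda$ and the same split/elliptic type as $Y$, with $|\varpi\lambda|_{\mF}=q^{-1}|\lambda|_{\mF}$; and that $\fkg_{x,(k+1)^{+}}=\varpi\,\fkg_{x,k^{+}}$, so $Y\in\fkg_{k^{+}}\iff\varpi Y\in\fkg_{(k+1)^{+}}$. For instance, in the integral split case $(\mytotal_{k+1}\circ\exp)(\varpi Y)=(q^{2}-1)\,q^{3(k+1)}\bigl(2q^{-(k+1)}/|\varpi\lambda|_{\mF}-1\bigr)=(q^{2}-1)\,q^{3k+3}\bigl(2q^{-k}/|\lambda|_{\mF}-1\bigr)=q^{3}\,(\mytotal_{k}\circ\exp)(Y)$, and the elliptic and half-integral cases are identical in structure, with the extra $q^{1/2}$ factors cancelling. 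The only real obstacle is the identity $|\alpha-\alpha^{-1}|_{\mF}=|\lambda|_{\mF}$ together with the preservation of split/elliptic type under $\exp$: this is precisely where convergence of $\exp$, topological nilpotence of $Y$, and the odd residue characteristic all enter, and one should confirm that no lower bound on $k$ beyond $k^{+}\ge k_{o}$ is needed.
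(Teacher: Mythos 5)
Your proof is correct and takes the same route the paper intends: the corollary is presented as an immediate consequence of Theorem~\ref{main-result-group} under condition~\eqref{lie-algebra-condition}, and you have simply supplied the routine bookkeeping the paper leaves implicit. The two substantive points --- that $\alpha-\alpha^{-1}=2\lambda\bigl(1+\tfrac{\lambda^{2}}{3!}+\cdots\bigr)$ so $|\alpha-\alpha^{-1}|_{\mF}=|\lambda|_{\mF}$ (using topological nilpotence of $\lambda$ and $|2|_{\mF}=1$ in odd residue characteristic), and that $\exp$ preserves the split/elliptic dichotomy and carries $\mathcal{N}^{\text{top}}_{k^{+}}$ to $\mathcal{U}^{\text{top}}_{k^{+}}$ --- are exactly what the transfer needs, and your homogeneity verification via $|\varpi\lambda|_{\mF}=q^{-1}|\lambda|_{\mF}$ and $\fkg_{(k+1)^{+}}=\varpi\,\fkg_{k^{+}}$ matches the paper's remark (iii) and equation~\eqref{homogeneity-relation-a}.
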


\medskip

Remarks: \ \ (i) \ Examination of the explicit formulae for $\mytotal_{k}$ and 
  $\mytotal_{k}  \, \circ \, \myexp$, shows these distributions depend only on the characteristic polynomial of the input, and therefore are stable distributions.

\smallskip

\ (ii) \ The formula for $\sigma_{0} \, = \, \epsilon_{0}$ leads to the observation that it is the restriction of the Steinberg character to ${\mathcal U}^{\text{\rm{top}}}$. 

\smallskip

\ (iii) \ The transfer of $\sigma_{k}$ to $\sigma_{k} \circ \myexp$ is only valid when $k \, \ge \, k_{0}$, but the homogeneity relation \eqref{homogeneity-relation-a} allows us to formally continue $\sigma_{k} \circ \myexp$ to the range $0 \, \le \, k \, < \, k_{0}$, so the continuation for parameter has support in $\fkg_{k^{+}}$, e.g., $\fkg_{0^{+}} \, = \, {\mathcal N}^{\text{\rm{top}}}$ when $k \, = \, 0$.  This is analogous to the behavior of $\sigma_{k}$'s in the same range.  

\smallskip

\ (iv) \ The power $3$ of the factor $q^3$ should be viewed as the dimension of $\fkg$.  Under Fourier transform on the Lie algebra, the homogeneity relation \eqref{homogeneity-relation-a} becomes a 

\begin{equation}\label{homogeneity-relation-b}
\myFT \, ( \, \mytotal_{k+1}  \, \circ \, \myexp \, ) \, ( \, \varpi^{-1} Y \, ) \  = \ \myFT \,  ( \, \mytotal_{k}  \, \circ \, \myexp \, ) \, ( \, Y \, ) \ .
\end{equation}

\bigskip

To identify $\myFT \,  ( \, \mytotal_{k}  \, \circ \, \myexp \, )$, we note the following Proposition, whose proof is in the appendix:

\begin{prop*}{\bf \ref{appendix-prop}}   \ \ For $\fkg \, = \, \fks \fkl (2, \mF)$, we have

\vskip 0.10in

\begin{itemize} 
\item[$\bullet$] The Fourier transforms $FT(1_{\fkg_{0}} )$ and $FT(1_{\fkg_{-{\frac12}}} )$ have support in the sets $\fkg_{0^{+}} := \fkg_{\frac12}$  and 
$\fkg_{ ({\frac12})^{+}} := \fkg_{1}$ respectively.  \ \  In particular, the support is contained in ${\mathcal N}^{\text{\rm{top}}}$. 

\vskip 0.10in

\item[$\bullet$] For $k \ge 1$, the Fourier transform
$FT(1_{\fkg_{-k}} )$ has support in $\fkg_{k^{+}} := \fkg_{k+\frac12}$.
\end{itemize}
\end{prop*}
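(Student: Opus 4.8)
The plan is to reduce the statement to an explicit but elementary Fourier‑integral computation on $\mathfrak{sl}(2,\mF)$, with the determinant serving as the universal coordinate on all the filtration $\mG$‑domains in play. First I would record, for $k\in\tfrac12\bN$, the identifications
\begin{equation*}
\fkg_{-k}=\{\,X\in\mathfrak{sl}(2,\mF):\myval(\det X)\ge -2k\,\},\qquad
\fkg_{k^{+}}=\{\,X:\myval(\det X)\ge 2k+1\,\}.
\end{equation*}
The second is just the statement that $\fkg_{0^{+}}=\fkg_{1/2}$ is the topologically nilpotent set — elements whose eigenvalues have positive valuation — together with $\fkg_{x,r+1}=\varpi\fkg_{x,r}$; the first reduces by the same scaling to the case $k=0$, where one checks that $\fkg_{0}$ is exactly the set of elements with integral characteristic polynomial. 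That check is immediate from the Jordan decomposition in $\mathfrak{sl}(2)$: an element of $\fkg_{0}$ is either nilpotent, hence $\mG$‑conjugate into $\fkg_{x_{0},0}$, or regular semisimple with integral eigenvalues, hence contained in the depth‑zero lattice of the bounded torus it generates. In particular $\fkg_{-(k+1)}=\varpi^{-1}\fkg_{-k}$, so from $FT(1_{\varpi^{-1}S})(Y)=q^{3}\,FT(1_{S})(\varpi^{-1}Y)$ and $\varpi\cdot\fkg_{k^{+}}=\fkg_{(k+1)^{+}}$ one gets $\operatorname{supp}FT(1_{\fkg_{-(k+1)}})=\varpi\cdot\operatorname{supp}FT(1_{\fkg_{-k}})$, and it suffices to prove the two base cases $k=0$ and $k=\tfrac12$; all larger $k$ follow by this homogeneity.

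For a base case, note $\fkg\setminus\fkg_{k^{+}}=\{X:\myval(\det X)\le 2k\}$ consists entirely of regular semisimple elements, and $1_{\fkg_{-k}}$ is $\myAd(\GL(2,\mF))$‑invariant (it depends only on $\det$), so $FT(1_{\fkg_{-k}})$ is an invariant distribution whose restriction to the regular set depends only on the characteristic polynomial; evaluating it at a regular semisimple $Y$ with $\det Y=d$ is therefore the same as evaluating at the companion matrix $Y_{d}=\bigl(\begin{smallmatrix}0&1\\-d&0\end{smallmatrix}\bigr)$, which realizes every $d\in\mF^{\times}$ (split and both kinds of elliptic). Since $\fkg_{-k}$ has infinite measure I would regularize, writing $FT(1_{\fkg_{-k}})(Y_{d})=\lim_{N}\int_{\fkg_{-k}\cap\varpi^{-N}\fkg_{x_{0},0}}\psi(\mytrace(XY_{d}))\,dX$, a genuine finite integral, and compute it in the coordinates $X=\bigl(\begin{smallmatrix}a&b\\c&-a\end{smallmatrix}\bigr)$, where $\mytrace(XY_{d})=c-db$ and the domain is $\{\myval(a^{2}+bc)\ge -2k,\ \myval(a),\myval(b),\myval(c)\ge -N\}$. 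Integrating out $a$ first: for fixed $w=bc$ the inner $a$‑integral is a fixed power of $q$ when $\myval(w)\ge -2k$, and equals $2q^{\lfloor k\rfloor+\myval(w)/2}$ or $0$ when $\myval(w)<-2k$, according to whether $-w$ is a square in $\mF^{\times}$ (one line of Hensel's lemma, using $p$ odd). What remains is a sum over $\myval(b)$ and $\myval(c)$ of integrals of $\psi$ over cosets and, in the square‑class terms, of $\psi$ twisted by the tame quadratic character — i.e. standard $p$‑adic Gauss sums. Assembling these and letting $N\to\infty$, the contributions that grow linearly in $N$ (produced by the infinite measure of $\fkg_{-k}$) cancel, the answer vanishes precisely when $\myval(d)\le 2k$, giving the claimed support, and for $\myval(d)>2k$ one recovers — up to the Haar normalization — the value of $\mytotal_{k}\circ\myexp$ from the preceding Corollary. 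The half‑integral base case $k=\tfrac12$ is handled by the identical computation with $-2k=-1$.

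The main obstacle is exactly this last computation: one has to carry the regularized double integral far enough to watch the $N$‑linear terms cancel, and to keep the square‑class bookkeeping honest across the split and ramified‑elliptic ranges of $d$. The vanishing for $\myval(d)\le 2k$ comes down to two entirely elementary inputs — $\int_{\mathcal R_{\mF}}\psi=0$ (the additive character is non‑trivial on the integers), and the vanishing of the pertinent $p$‑adic Gauss sums at the wrong conductor — so the whole argument stays at the level of explicit integration over $\mF$, matching the appendix's character.
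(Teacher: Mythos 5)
Your reductions are sound and in the same spirit as the paper's proof: both recognize that $\fkg_{-k}$ is cut out by the condition $\myval(\det X)\ge -2k$, both exploit $\GL(2,\mF)$-invariance of $1_{\fkg_{-k}}$ (and hence of its Fourier transform) to restrict attention to a single representative of each regular semisimple orbit, both regularize the divergent integral by truncation, and both use the homogeneity $\fkg_{k-1}=\varpi^{-1}\fkg_{k}$ together with the scaling behavior of $\myFT$ to collapse the infinite family to a couple of base cases. Where you diverge is in the choice of representative and the order of integration: you evaluate at the companion matrix $Y_d=\left(\begin{smallmatrix}0&1\\-d&0\end{smallmatrix}\right)$ and integrate out the diagonal variable $a$ first, producing a weight $W(bc)$ supported on the square class of $-bc$; the paper instead conjugates $Y$ into the box-normalized anti-diagonal form $\left(\begin{smallmatrix}0&B\\C&0\end{smallmatrix}\right)$ with $\myval(B)\le\myval(C)\le\myval(B)+1$ and then runs a valuation-by-valuation case analysis in $(a,b,c)$ directly, absorbing the square-class condition into the single subcase $\beta=\gamma=-k$. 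Both are legitimate; yours front-loads the square-class bookkeeping, the paper's avoids coupling $b$ and $c$ through the product $bc$.

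The problem is that the heart of the Proposition --- the actual vanishing of the truncated integral when $\myval(d)\le 2k$ --- is asserted rather than proved. You reduce cleanly to the double integral $\int\psi(c-db)\,W(bc)\,db\,dc$, correctly describe the weight $W$, and then state that ``the contributions that grow linearly in $N$ cancel'' and ``the answer vanishes precisely when $\myval(d)\le 2k$,'' citing this as ``the main obstacle.'' But that obstacle \emph{is} the Proposition: everything before it is elementary reduction, and everything claimed after it is exactly what is to be shown. The hyperbolic region $\{\myval(bc)\ge 0\}$ collapses quickly (the $c$-integral forces $\gamma\ge 0$ and the $b$-integral forces $\gamma\le\myval(d)-1$, so this piece is visibly zero when $\myval(d)\le 0$ and is even compactly supported so there is no $N$-growth there), but the elliptic region $\{\myval(bc)<0,\ -bc\ \text{a square}\}$ is where the $N$-dependence and the Gauss sums actually live, and you have not shown why those contributions die. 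Until that portion of the computation is written out --- which is precisely the content of the paper's case $\val(C)=\val(B)$, $a\notin\calR_{\mF}$, subcases $\beta\ne\gamma$ and $\beta=\gamma=-k$ --- the argument is a plan for a proof rather than a proof.
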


For a general connected reductive p-adic group, under conditions in which the exponential map takes ${\mathcal N}^{\text{\rm{top}}}_{r}$ to ${\mathcal U}^{\text{\rm{top}}}_{r}$ ($r \, > \, 0$), Kim [{\reKa},{\reKb}], showed, for $X$ in $\fkg_{\text{\normalsize{\rm{$({\frac{d}{2}})^{+}$}}}}$:

$$
\int_{\widehat{G}^{{\text{\tiny{\rm{{\,}temp}}}}}_{\le d}} \ \Theta_{\pi} ( \, \exp (X) \, ) \, d \mu_{_{\text{PM}}} \, ( \pi ) \ = \ 
FT(1_{\fkg_{-d}}) \, (X) \ ,
$$

\noindent{where} ${\widehat{G}^{{\text{\tiny{\rm{{\,}temp}}}}}_{\le d}}$ is the (classes of) irreducible tempered representations of depth less than or equal to $d$.  In this situation, for $\SL (2)$, we have 
$$
\sigma_{d} \circ \exp  \ = \ FT(1_{\fkg_{-d}}) \qquad {\text{\rm{(both sides have support in $\fkg_{d^{+}}$).}}}
$$

\noindent{We} conjecture, for $\SL(2)$, and more generally for any connected reductive p-adic group, this identity is true when the depth is sufficiently large.

\vfill

\eject

\appendix
\section{\ }

\medskip
Here $\mG \, = \, \SL (2, \mF)$, and $\fkg \, = \, \fks \fkl (2, \mF)$.  Suppose $\psi$ is an additive character of $\mF$ with conductor $\wp$.  Let $\myFT$ denote the Fourier transform on $\fkg$, i.e., if $f \in C^{\infty}_{c}(\fkg )$:

$$
\myFT \, (f) (Y) \ = \ \int_{\fkg} \psi \, ( \, \mytrace (\, X \, Y \, ) \, ) \ f(X) \ dX \ .
$$ 

\noindent{In} this appendix we prove:

\begin{prop}\label{appendix-prop}  \ \ For $\fkg \, = \, \fks \fkl (2, \mF)$, we have

\vskip 0.10in

\begin{itemize} 
\item[$\bullet$] The Fourier transforms $FT(1_{\fkg_{0}} )$ and $FT(1_{\fkg_{-{\frac12}}} )$ have support in the sets $\fkg_{0^{+}} := \fkg_{\frac12}$  and 
$\fkg_{ ({\frac12})^{+}} := \fkg_{1}$ respectively.  \ \  In particular, the support is contained in ${\mathcal N}^{\text{\rm{top}}}$. 

\vskip 0.10in

\item[$\bullet$] For $k \ge 1$, the Fourier transform
$FT(1_{\fkg_{-k}} )$ has support in $\fkg_{k^{+}} := \fkg_{k+\frac12}$.
\end{itemize}
\end{prop}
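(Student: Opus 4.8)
The plan is to reduce the statement to two ``base cases'' of depth $0$ and $0^{+}$ using the homogeneity of the Moy--Prasad lattices, and then to treat these directly, exploiting that $\fkg_{r}$ is cut out by the single quadratic form $\det$. Taking the union over $x$ in the building in the relation $\fkg_{x,r+1} = \varpi\,\fkg_{x,r}$ gives $\fkg_{r+1} = \varpi\,\fkg_{r}$, hence $\fkg_{-k} = \varpi^{-k}\fkg_{0}$ and $\fkg_{-k-\frac12} = \varpi^{-k-1}\fkg_{0^{+}}$. Since $\dim_{\mF}\fkg = 3$, the substitution $X\mapsto\varpi^{-k}X$ shows $FT(1_{\varpi^{-k}L})(Y) = q^{3k}\,FT(1_{L})(\varpi^{-k}Y)$ for any open set $L$, so
\begin{equation*}
\mathrm{supp}\,FT(1_{\fkg_{-k}}) = \varpi^{k}\cdot\mathrm{supp}\,FT(1_{\fkg_{0}}), \qquad \mathrm{supp}\,FT(1_{\fkg_{-k-\frac12}}) = \varpi^{k+1}\cdot\mathrm{supp}\,FT(1_{\fkg_{0^{+}}}).
\end{equation*}
Because $\varpi^{k}\fkg_{0^{+}} = \fkg_{k+\frac12} = \fkg_{k^{+}}$ and $\varpi\,\fkg_{0} = \fkg_{1} = \fkg_{(\frac12)^{+}}$, the Proposition follows from the two statements $\mathrm{supp}\,FT(1_{\fkg_{0}})\subseteq\fkg_{0^{+}}$ and $\mathrm{supp}\,FT(1_{\fkg_{0^{+}}})\subseteq\fkg_{0}$.

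For the base cases, one first records that $\fkg_{0} = \{X\in\fkg : \det X\in\mathcal R_{\mF}\}$ and $\fkg_{0^{+}} = \{X : \det X\in\wp_{\mF}\}$: every maximal parahoric Lie algebra $\fkg_{x,0}$ is $GL(2,\mF)$-conjugate to $\fks\fkl(2,\mathcal R_{\mF})$, on which $\det$ takes values in $\mathcal R_{\mF}$, which gives one inclusion; conversely a semisimple $X$ with $\val_{\mF}(\det X)\ge 0$ has eigenvalues of valuation $\ge 0$, hence lies in $\mathfrak t_{0}$ for the torus containing it and so in some $\fkg_{x,0}$, while a nilpotent element is conjugate into $\fks\fkl(2,\mathcal R_{\mF})$. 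Thus $\fkg\setminus\fkg_{0^{+}}$ is exactly the set of regular semisimple $X$ with $\det X\in\mathcal R_{\mF}^{\times}$. Now use that, $\psi$ having conductor $\wp_{\mF}$, one has $1_{\mathcal R_{\mF}}(x) = c\int_{\wp_{\mF}}\psi(tx)\,dt$ and $1_{\wp_{\mF}}(x) = c'\int_{\mathcal R_{\mF}}\psi(tx)\,dt$ for positive constants $c,c'$, so that $1_{\fkg_{0}}(X) = c\int_{\wp_{\mF}}\psi(t\det X)\,dt$ and $1_{\fkg_{0^{+}}}(X) = c'\int_{\mathcal R_{\mF}}\psi(t\det X)\,dt$. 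Pairing against $f\in\Cic(\fkg)$ and using $FT(f)\in\Cic(\fkg)$ (so the order of integration may be exchanged, the inner $t$-integral running over a compact set) together with the Plancherel identity,
\begin{equation*}
\langle FT(1_{\fkg_{0}}),\,f\rangle \ = \ c\int_{\wp_{\mF}}\big\langle f,\ FT\big(\psi(t\det(\cdot))\big)\big\rangle\,dt,
\end{equation*}
and similarly with $\int_{\mathcal R_{\mF}}$ for $1_{\fkg_{0^{+}}}$.

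The function $X\mapsto\psi(t\det X)$ is a quadratic Gaussian; since $\det$ is a nondegenerate quadratic form on the $3$-dimensional $\mF$-space $\fkg$ whose polarization is $-\trace(XY)$ (so $\fks\fkl(2,\mathcal R_{\mF})$ is self-dual, the discriminant a unit), the standard $p$-adic Gaussian Fourier formula yields $FT\big(\psi(t\det(\cdot))\big)(Y) = \gamma_{\psi}(t)\,|t|_{\mF}^{-3/2}\,\psi(-\lambda\,t^{-1}\det Y)$ for a fixed unit $\lambda$ and the Weil index $\gamma_{\psi}$. Feeding this back gives $\langle FT(1_{\fkg_{0}}),f\rangle = c\int_{\wp_{\mF}}\gamma_{\psi}(t)\,|t|_{\mF}^{-3/2}\,\Phi_{f}(-\lambda t^{-1})\,dt$, where $\Phi_{f}(s):=\int_{\fkg}f(X)\,\psi(s\det X)\,dX$. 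Assume $\mathrm{supp}(f)\subseteq\fkg\setminus\fkg_{0^{+}} = \{\val_{\mF}(\det X)\le 0\}$, decompose $\Phi_{f}$ according to the value $m_{0}:=\val_{\mF}(\det X)\le 0$, and break the $t$-integral into annuli $t\in\varpi^{j}\mathcal R_{\mF}^{\times}$ with $j\ge 1$. Since, in odd residue characteristic, $\gamma_{\psi}$ factors through $\mF^{\times}/(\mF^{\times})^{2}$, on each annulus $\gamma_{\psi}(t)$ equals a constant times $\chi_{2}(\bar u)^{\varepsilon}$ (with $u := \varpi^{-j}t$, $\varepsilon\in\{0,1\}$ depending on $j$, and $\chi_{2}$ the quadratic residue character). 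Thus $\langle FT(1_{\fkg_{0}}),f\rangle$ becomes a finite combination of integrals $\int_{\mathcal R_{\mF}^{\times}}\chi_{2}(\bar u)^{\varepsilon}\,\psi(\varpi^{m}\,w\,u)\,du$ with $w\in\mathcal R_{\mF}^{\times}$ and $m = m_{0}-j\le -1$. Each such integral vanishes: on a coset $\tilde a\,(1+\wp_{\mF})$ it is a scalar times $\int_{\wp_{\mF}}\psi(\varpi^{m}w\tilde a\,x)\,dx$, which after $x = \varpi y$ is a scalar times $\int_{\mathcal R_{\mF}}\psi(\varpi^{m+1}w\tilde a\,y)\,dy = 0$, since $\varpi^{m+1}w\tilde a\notin\wp_{\mF}$ when $m\le -1$. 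Hence $\langle FT(1_{\fkg_{0}}),f\rangle = 0$, which is the first base case. The second is proved the same way, with $\int_{\wp_{\mF}}$ replaced by $\int_{\mathcal R_{\mF}}=\int_{\mathcal R_{\mF}^{\times}}+\int_{\wp_{\mF}}$: the $\int_{\wp_{\mF}}$-part is exactly the integral just treated (for $Y$ off $\fkg_{0}$, a fortiori off $\fkg_{0^{+}}$), and for $\det Y\notin\mathcal R_{\mF}$ the $\int_{\mathcal R_{\mF}^{\times}}$-part contributes integrals with $m = \val_{\mF}(\det Y)\le -1$, which vanish for the same reason.

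The main obstacle is the bookkeeping in the last step: tracking the Weil index $\gamma_{\psi}(t)$ and the induced quadratic--residue twists $\chi_{2}$ correctly through the annular decomposition, and identifying exactly which pairs $(j,m_{0})$ occur, both for the vanishing above and -- should one want the support statements to be sharp -- for the complementary non-vanishing on $\fkg_{0^{+}}$ resp.\ $\fkg_{0}$. A secondary point needing some care is justifying the $p$-adic Gaussian Fourier formula and the interchanges of integration; this is cleanest if one works throughout with the distributions paired against test functions in $\Cic(\fkg)$, rather than manipulating the non-integrable functions $1_{\fkg_{0}}$ and $1_{\fkg_{0^{+}}}$ directly.
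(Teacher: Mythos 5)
Your reduction via the homogeneity $\fkg_{r+1}=\varpi\,\fkg_{r}$ and the scaling behavior of $\myFT$ parallels the reduction the paper itself makes (proving the two cases $\fkg_{0}$ and $\fkg_{-\frac12}$ and then invoking $\fkg_{k-1}=\varpi^{-1}\fkg_{k}$), but your treatment of the base cases is a genuinely different argument. The paper picks a convenient anti-diagonal representative $Y=\left[\begin{matrix}0&B\\C&0\end{matrix}\right]$ with $\val(B)\le\val(C)\le\val(B)+1$, writes $\trace(XY)=bC+cB$, and works directly with the principal-value integral over the truncations $\mathcal T_{\ell}\subset\fkg_{0}$, decomposing by the valuations of the entries $a,b,c$ of $X$ and showing that in each region an inner integration runs over an additive coset on which $\psi$ restricts to a nontrivial character, hence vanishes. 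Your proof instead exploits that $\fkg_{0}$ and $\fkg_{0^{+}}$ are level sets of the single quadratic form $\det$, writes $1_{\fkg_{0}}=c\int_{\wp_{\mF}}\psi(t\det(\cdot))\,dt$ (and likewise $1_{\fkg_{0^{+}}}$ with $\int_{\mathcal R_{\mF}}$), and applies the $p$-adic Gaussian Fourier formula; after annular decomposition in $t$ and square-class analysis of the Weil index, everything collapses to the one-dimensional vanishing $\int_{\mathcal R_{\mF}^{\times}}\chi_{2}^{\varepsilon}(\bar u)\,\psi(\varpi^{m}wu)\,du=0$ for $m\le-1$. The Gaussian route is structurally cleaner (a single formula plus a one-variable sum replaces the paper's case-by-case partition of $\mathcal T_{\ell}$), and it sidesteps the principal-value regularization by pairing against $C^{\infty}_{c}$ test functions throughout; the cost is that it imports the Weil-index machinery that the paper's elementary argument avoids.

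Two small points. The sentence ``$\fkg\setminus\fkg_{0^{+}}$ is exactly the set of regular semisimple $X$ with $\det X\in\mathcal R_{\mF}^{\times}$'' should say $\det X\notin\wp_{\mF}$, i.e.\ $\val_{\mF}(\det X)\le0$; your later use of $m_{0}:=\val_{\mF}(\det X)\le0$ is the correct version, so this is only a slip in the prose. And the parenthetical claim that $\fks\fkl(2,\mathcal R_{\mF})$ is self-dual is not quite right with $\psi$ of conductor $\wp_{\mF}$ (its $\psi$-dual lattice under $\trace(XY)$ is $\wp_{\mF}\,\fks\fkl(2,\mathcal R_{\mF})$), though the argument only needs nondegeneracy of $\det$ over $\mF$ and not any integral self-duality. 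The Weil-index bookkeeping that you flag as the residual work is indeed the part requiring care, but the strategy is sound, and once carried out this gives a complete alternative proof of the Proposition.
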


\begin{proof}  Since $1_{\fkg_{0}}$, and $1_{\fkg_{-{\frac12}}}$ are $\myAd \, (\mG )$-invariant sets, their Fourier transforms are $\myAd \, (\mG )$-invariant.  Therefore, it is sufficient to show the stated vanishing on any convenient element in an 
Adjoint orbit.   

\smallskip

We prove the result for $1_{\fkg_{0}}$ and remark our argument proof is easily adapted to also treat the case $1_{\fkg_{-\frac12}}$.  We have 

$$
\myFT \, (1_{\fkg_{0}} ) \, ( \, Y \, ) \ = \ \myPV \, \int_{\fkg}  \ \psi ( \, \mytrace ( \, X \, Y \, ) \ 1_{\fkg_{0}}( \, X \, ) \ dX \quad ({\text{\rm{principal value}}}) .
$$
\noindent{We} note that $\fkg_{0} \, = \, \{ \, X \in \fkg \ | \ \det (X) \in {\mathcal R}_{\mF} \, \}$.  Let $X \, = \, \left[ \begin{matrix} a &b \\ c &-a \end{matrix} \right]$,  and for integral $\ell$, set 
$$
\aligned
{\mathcal T}_{\ell} \ = \ \{ \ X \in \fkg_{0} \ | \ a, \, b, \, c \ \in \wp^{-\ell} \ \} \ = \ \{ \ X \in \fkg \ | \ \det(X) \in {\mathcal R}_{\mF} \ {\text{\rm{and}}} \ a, \, b, \, c \ \in \wp^{\ell} \ \} \ .
\endaligned
$$

\noindent{We} show for $Y \not\in \fkg_{\frac12}$ the integral
$$
\int_{{\mathcal T}_{\ell}}  \ \psi ( \, \mytrace ( \, X \, Y \, ) \, ) \ 1_{\fkg_{0}}( \, X \, ) \ dX \ = \ \int_{{\mathcal T}_{\ell}}  \ \psi ( \, \mytrace ( \, X \, Y \, ) \, ) \ dX \quad {\text{\rm{vanishes for ${\ell}$ large.}}} 
$$
\smallskip

The Fourier transforms $\myFT ( 1_{\fkg_{j}} )$ are invariant under the Adjoint action of $\GL (2,\mF  )$.  The  $\GL (2,\mF  )$-orbit of a regular semisimple element $Y$ contains an element of the form 
$$
\left[ \begin{matrix} 0 &B \\ C &0 \end{matrix} \right] \quad {\text{\rm{with $\val (B) \, \le \, \val (C) \, \le \, \val (B)+1$.}}} 
$$
We take $Y$ to have this anti-diagonal form.  Here, $Y$ is not topologically nilpotent when $\val (C) \, \le \, 0$.   It thus suffices to  prove $\myFT ( 1_{\fkg_{0}} ) (Y) \, = \, 0$ in this situation.  For $X \, = \, \left[ \begin{matrix} a &b \\ c &-a \end{matrix} \right]$, we have \ $\mytrace ( \, X Y \, ) \, = \, (bC \, + \, cB)$, \ and so

$$
\int_{{\mathcal T}_{\ell}}  \ \psi ( \, \mytrace ( \, X \, Y \, ) \, ) \ dX \ = \ 
\int_{{\mathcal T}_{\ell}}  \ \psi ( \, bC \, + \, cB \, ) \ dX \ .
$$

\bigskip

\noindent {\sc Case \  $\val (C) \, = \, \val (B)$:} \ \ We remark in this situation, $Y$ is either split ore elliptic unramified.  {We} show the integral vanishes if $B, \, C \, \not\in \, \wp$.   Our strategy is to partition ${\mathcal T}_{\ell}$ into regions were the integral is zero. 

\medskip

\noindent{\sc Subcase $a \in {\mathcal R}_{\mF}${\,}:} \ \ The condition for $X$ to be in $\fkg_{0}$ is $a^2 +bc \in {\mathcal R}_{\mF}$, and so $bc \in {\mathcal R}_{\mF}$.  We consider two subcases based on $b$. 

\begin{itemize}

\item[$\bullet$] Subcase $b \in {\mathcal R}$.  \ \  Here, dependent on the valuation $\myval (b)$, the variable $c$ runs over an ideal between $\wp^{0}$ and $\wp^{-\ell}$.  The assumption $B \not\in \wp$ means \ $c \, \rightarrow \, \psi (cB )$ \ is a non-trivial character on its allowed ideal and therefore, for fixed $a$ and $b$, the integral over $c$ is zero.  We deduce the integral over the region in ${\mathcal T}_{\ell}$ which satisfies $a, b \in {\mathcal R}$ is zero.

\medskip

\item[$\bullet$] Subcase $b = \varpi^{-k} u$ with (integral) $k>0$ and $u$ a unit. \ \   The condition $bc \in {\mathcal R}$, is $c \in \wp^{k} \, \subset \, \wp$.  If $\val (B) \le -k$, then $c \, \rightarrow \, \psi (cB)$ is a non-trivial character; so integration of $c$ over $\wp^{k}$ is zero.  If $-k < \val (B)$, then $\psi (cB) \, = \, 1$; so $\psi(bC+cB) \, = \, \psi (bC)$.  Integration over $c \in \wp^{k}$ yields $\psi (bC) \, \meas ( \wp^{k})$.   We note that $x \rightarrow \psi (xC)$ is a non-trivial character on ${\mathcal R}$, and $\psi ((b+x)C) = \psi (bC) \psi (xC)$.  If we integrate over all $b \in \wp^{-k} \backslash  \wp^{-k+1}$ we get zero.  We deduce the integral over the region in ${\mathcal T}_{\ell}$ which satisfies $a \in {\mathcal R}$ and $b \not\in {\mathcal R}$ is zero. 

\smallskip

\end{itemize}

\medskip

\noindent{\sc Case $a \not\in {\mathcal R}_{\mF}${\,}:} \ \ Write $a$ as $a \, = \, \varpi^{-k}u$ with $u$ a unit, and $k$ a positive integer (note $k \le \, \ell$ to satisfy $a \in \wp^{-\ell}$).  The condition $a^2 \, + \, bc \, \in \, {\mathcal R}$ for $X$ to be in $\fkg_{0}$ is thus $-bc \, \in \, \varpi^{-2k}u^2 \, + \, {\mathcal R}$.  In particular, $b$ and $c$ are non-zero.  Write $b$, as $b = \varpi^{\beta} v_{b}$, with $v_b$ a unit.  The condition, $b, \, c \, \in \wp^{-\ell}$ means $-\ell \, \le \, \beta \, \le \, \ell - 2k$, and similarly for the valuation $\gamma = \val (c)$ of $c$.

\medskip

\begin{itemize}

\item[$\bullet$] \ \ If $\beta \neq \gamma$, then by the symmetry of $b$ and $c$, we assume $\beta \, < \, -k \, < \, \gamma$.  This imposes the condition
$c \, \in \, -\varpi^{\gamma} \, v^{-1}_{b} \, u^2 \, + \, \wp^{-\beta}$.  

\smallskip


\item[$\cdot$] If $\val (B) - \beta \, \le \, 0$, then for fixed $a$ and $b$, the integral over $c$ is zero. 

\smallskip

\item[$\cdot$]  If $\val (B) - \beta \, > \, 0$, then $\psi (cB) \, = \, \psi ( -\varpi^{\gamma} \, v^{-1}_{b} \, u^2 \, B )$ is independent of $c \, \in \, -\varpi^{\gamma} \, v^{-1}_{b} \, u^2 \, + \, \wp^{-\beta}$.  Thus, if we fix $a$ and $b$, and integrate $\psi(bC+cB)$ over $c$, we get 
$$
\psi (b C) \, \psi ( -\varpi^{\gamma} \, v^{-1}_{b} \, u^2 \, B ) \, \meas (\varpi^{(-2k-\beta)} + \wp^{-\beta}) \ .
$$
\noindent{If} we perturb $b$ by $v_{b}x \in \wp^{-\val (B)}$ to $b' \, = \, b+v_{b}x$, so $v_{b'} = v_{b} ( 1 + \varpi^{-\beta} x)$, then  the corresponding $c'$ satisfies $c' \, \in \, -\varpi^{\gamma} v^{-1}_{b} (1 + \varpi^{-\beta} x)^{-1} u^2 \, + \, \wp^{-\beta} \, = \, -\varpi^{\gamma} v^{-1}_{b} u^2 + \varpi^{\gamma-\beta} v^{-1}_{b}x u^2 \, + \,  \wp^{-\beta}$.  We deduce $\psi (c'B) \, = \, \psi ( -\varpi^{\gamma} \, v^{-1}_{b} \, u^2 \, B )$ is independent of $x$.  So, $\psi (b'C \, + \, c'B) \, = \, \psi (bC) \, \psi (xC) \, \psi ( -\varpi^{\gamma} \, v^{-1}_{b} \, u^2 \, B )$, and therefore, if we restrict to $b \in \varpi^{-\beta} {\mathcal R}^{\times}$ integrate over $c'$ followed by integration over $b$, we get zero.

\medskip

\item[$\bullet$] \ \ If $\beta = \gamma = -k$, we have 
$$
X \ = \ \varpi^{-k} \left[ \begin{matrix} u &v_{b} \\ v_{c} &-u \end{matrix} \right ] \quad {\text{\rm{with \ $u, \, v_{b}, \, v_{c}$ units}}},   
$$
\noindent{and} the condition for $X \in \fkg_{0}$ is $u^2 \, + \, v_{b} v_{c} \, \in \, \wp^{2k}$.   We see the product $-v_{b}v_{c}$ must be a square in ${\mathcal R}^{\times}$.  Conversely, if $-v_{b}v_{c}$ is a square, the condition on $u$ is 
$u^2 \, \in \,   -v_{b}v_{c} + \wp^{2k}$.  We fix $v_c$ and multiplicatively perturb $v_{b}$ by $1+\wp$, the integral of 
$\psi ( \varpi^{-k} (v_{c} B + v_{b'} C) )$ over $b' \in \varpi^{-k} v_{b} ( 1+\wp )$, and $a \, = \, \varpi^{-k}u$ with  $u^2 \, \in \,  -v_{b'}v_{c} + \wp^{2k}$.  The integration over $a$ yields a constant independent of $b'$, and then the integration over $b'$ is zero.  So the integral of $\psi ( \varpi^{-k} (v_{c} B + v_{b'} C) )$ over the region satisfying $X \in \fkg_{0}$ and $a, \, b, \, c \, \in \varpi^{-k} {\mathcal R}^{\times}$ is zero.  

\end{itemize}

\noindent {\sc Case \  $\val (C) \, = \, \val (B)+1$:} \ \ The proof here is a minor modification of the case  $\val (C) \, = \, \val (B)$.  We omit the details.

\medskip

\noindent{This} completes the proof that $\myFT (1_{\fkg_{0}})$ has support in $\fkg_{\frac12}$.  

\medskip


\medskip 

The statement about the support of $\myFT (1_{\fkg_{-k}})$ for $k \ge 0$ follows from the elementary property  ${\fkg_{k-1}} \, = \, \varpi^{-1} \fkg_{k}$, and elementary homogeneities of the Fourier transform.

\end{proof}

\vfill
\vfill

}}

\end{document}